 \renewcommand*{\backrefalt}[4]{%
    \ifcase #1%
     \or (Cited on page~#2.)%
     \else (Cited on pages~#2.)%
    \fi%
    }
\renewcommand{\algorithmiccomment}[1]{\bgroup\small\hfill//~#1\egroup}
\newcommand\numberthis{\addtocounter{equation}{1}\tag{\theequation}}
\crefname{assumption}{assumption}{assumptions}
\title{\bf TiAda: A Time-scale Adaptive Algorithm for Nonconvex Minimax Optimization}
\author{
Xiang Li\thanks{Department of Computer Science, ETH Zurich, Switzerland.
\texttt{xiang.li@inf.ethz.ch},
\texttt{junchi.yang@inf.ethz.ch},
\texttt{niao.he@inf.ethz.ch}.}
\and
Junchi Yang\footnotemark[2]
\and
Niao He\footnotemark[2]
}
\date{\vspace{1ex}}
\begin{document}
\maketitle

\begin{abstract}
   Adaptive gradient methods have shown their ability to adjust the
   stepsizes on the fly in a
   parameter-agnostic manner, and empirically achieve faster convergence for solving minimization problems. When it comes to nonconvex minimax optimization, however, current convergence
   analyses of gradient descent ascent~(GDA) combined with adaptive
   stepsizes require careful tuning of hyper-parameters and the knowledge of
   problem-dependent parameters.
   Such a discrepancy arises from the primal-dual nature of minimax problems
   and 
   the necessity of delicate \emph{time-scale separation} between the primal and dual updates in attaining convergence.
   In this work, we propose a \emph{single-loop} adaptive GDA algorithm called TiAda
   for nonconvex minimax optimization that automatically adapts to the
   time-scale separation.
   Our algorithm is \emph{fully parameter-agnostic} and can
   achieve \emph{near-optimal complexities} simultaneously in deterministic
   and stochastic settings of nonconvex-strongly-concave minimax problems. The
   effectiveness of the proposed method is further justified numerically for a number of machine learning applications.

\end{abstract}

\vspace{0.25cm}
\section{Introduction}
\label{sec:intro}

Adaptive gradient methods, such as AdaGrad~\citep{duchi2011adaptive},
Adam~\citep{kingma2015adam} and AMSGrad~\citep{reddi2018convergence},
have become the default choice of optimization
algorithms in many machine learning applications owing to their robustness to 
hyper-parameter selection and fast empirical convergence. These advantages
are especially prominent in nonconvex regime with success in
training deep neural networks~(DNN). 
Classic analyses of gradient descent for smooth functions require the 
stepsize to be less than $2/l$, where $l$ is the smoothness parameter and often
unknown for complicated models like DNN.
Many adaptive schemes, usually with diminishing stepsizes
based on cumulative gradient information, can adapt
to such parameters and thus reducing the burden of hyper-parameter tuning ~\citep{ward2020adagrad,xie2020linear}. Such tuning-free
algorithms are called \emph{parameter-agnostic}, 
as they do not require any prior knowledge of  problem-specific parameters, e.g., the smoothness or strong-convexity parameter. 

In this work, we aim to bring the benefits of adaptive stepsizes to solving the following problem: 
\begin{align*}
  \min_{x \in \bR^{d_1}} \max_{y \in \cY} f(x, y) = \mathbb{E}_{\xi \in P}\left[ F(x, y;\xi)\right], \numberthis \label{eq:minimax} 
\end{align*}
where $P$ is an unknown distribution from which we can drawn i.i.d. samples, $\cY \subset \bR^{d_2}$ is closed and convex, and $f : \bR^{d_1} \times \bR^{d_2}
\rightarrow \bR$ is nonconvex in $x$. We call $x$ the primal variable and 
$y$ the dual variable. This  minimax formulation
has found vast applications in modern machine learning, notably generative adversarial 
networks~\citep{goodfellow2014generative,arjovsky2017wasserstein}, adversarial
learning~\citep{goodfellow2015explaining,miller2020adversarial}, reinforcement 
learning~\citep{dai2017learning,modi2021model}, sharpness-aware 
minimization~\citep{foret2021sharpnessaware}, domain-adversarial
training~\citep{ganin2016domain}, etc.
Albeit theoretically underexplored, 
adaptive methods are widely deployed in these applications in combination with
popular minimax optimization algorithms such as (stochastic) gradient descent ascent~(GDA),
extragradient~(EG)~\citep{korpelevich1976extragradient}, and optimistic GDA~\citep{popov1980modification,rakhlin2013online}; see, e.g., ~\citep{gulrajani2017improved,daskalakis2018training,mishchenko2020revisiting, reisizadeh2020robust}, just to list a few. 

\begin{figure}[t]
    \centering
    \begin{subfigure}[b]{0.31\textwidth}
      \centering
      \includegraphics[width=\textwidth]{./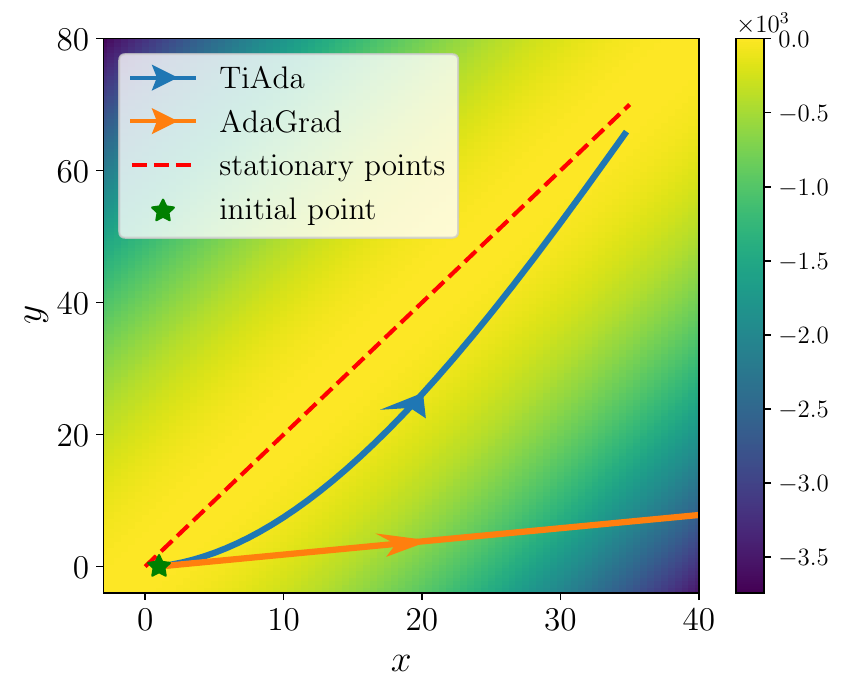}
      \caption{trajectory}
      \label{subfig:quad_trajectory}
    \end{subfigure}
    \hfill
    \begin{subfigure}[b]{0.332\textwidth}
      \centering
      \includegraphics[width=\textwidth]{./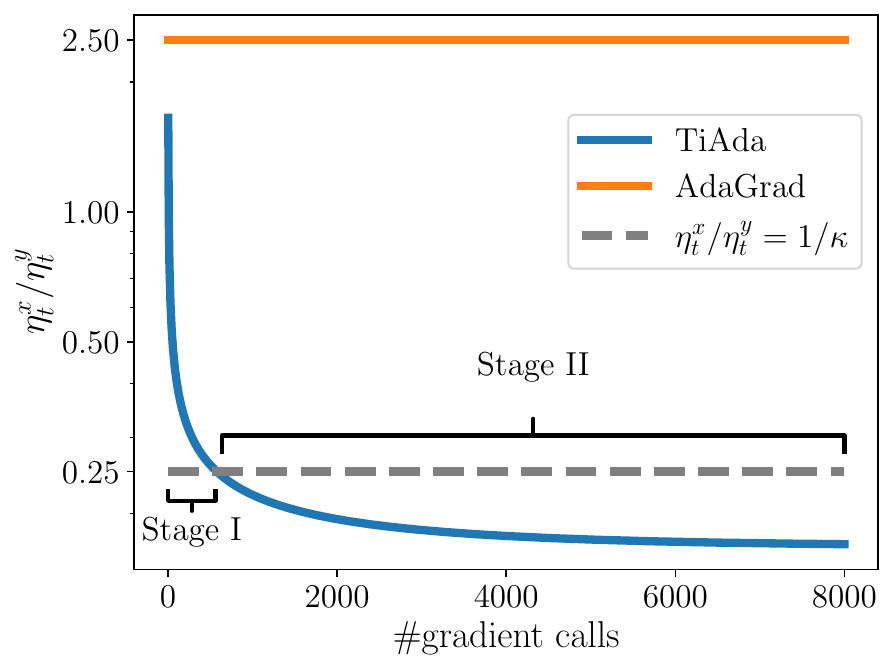}
      \caption{effective stepsize ratio}
      \label{subfig:quad_ratio}
    \end{subfigure}
    \hfill
    \begin{subfigure}[b]{0.324\textwidth}
      \centering
      \includegraphics[width=\textwidth]{./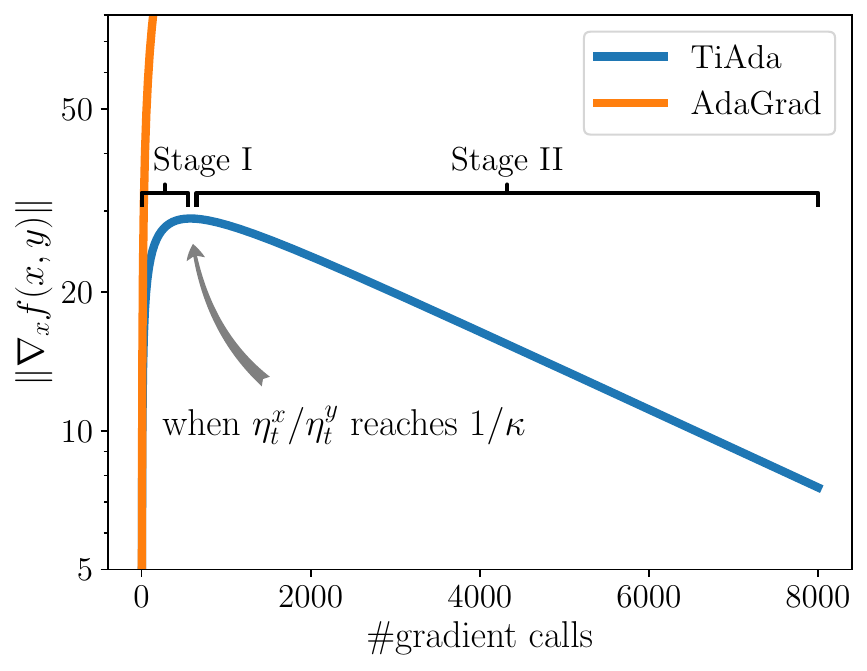}
      \caption{convergence}
      \label{subfig:quad_converge}
    \end{subfigure}
    \caption{Comparison between TiAda and vanilla GDA with AdaGrad  
    stepsizes (labeled as AdaGrad)  on the quadratic function~\eqref{eq:quad} with $L=2$ under a poor initial stepsize ratio, i.e., $\eta^x / \eta^y = 5$.
      Here, $\eta^x_t$ and $\eta^y_t$ are the effective stepsizes respectively for $x$ and $y$,
    and $\rk$ is the condition number\protect\footnotemark.
    (a) shows the trajectory of the two algorithms and the background color demonstrates the function value $f(x, y)$. In (b),  while the effective stepsize ratio stays unchanged for AdaGrad, TiAda adapts to the desired \emph{time-scale separation} $1/\rk$, which divides
  the training process into two stages. In (c), after entering Stage II, TiAda converges fast, whereas AdaGrad diverges.}
    \label{fig:compare_ratio}
\end{figure}

\footnotetext{Please refer to \Cref{sec:method} for formal definitions of initial stepsize and effective stepsize. Note that the initial stepsize ratio, $\eta^x / \eta^y$,  does not necessarily equal to the first effective stepsize ratio, $\eta_0^x / \eta_0^y$. }

While it seems natural to directly extend adaptive
stepsizes to  minimax optimization algorithms, a recent work
by~\citet{yang2022nest}
pointed out that such schemes
may not always converge without knowing problem-dependent
parameters. 
Unlike the case of
minimization, convergent analyses of GDA and EG for nonconvex minimax
optimization are subject to \emph{time-scale separation} ~\citep{boct2020alternating, lin2020gradient, sebbouh2022randomized, yang2021faster} --- the stepsize ratio of primal and dual variables needs to be
smaller than a problem-dependent threshold --- which is recently shown to be necessary even when the objective is strongly concave in $y$ with true gradients~\citep{li2022convergence}.
Moreover, \citet{yang2022nest} showed that GDA with standard adaptive stepsizes, that chooses the stepsize of each variable based only on the (moving) average of its own past gradients, fails
to adapt to the time-scale separation requirement.
Take the following nonconvex-strongly-concave function
as a concrete example:
\[
  f(x, y) = -\frac{1}{2}y^2 + Lxy - \frac{L^2}{2}x^2, \numberthis \label{eq:quad}
\]
where $L > 0$ is a constant. \citet{yang2022nest} proved that directly using adaptive stepsizes
like AdaGrad, Adam and AMSGrad will fail to converge if the ratio of initial stepsizes
of $x$ and $y$ (denoted as $\eta^x$ and $\eta^y$) is large.
We illustrate this phenomenon in \Cref{subfig:quad_trajectory,subfig:quad_converge},
where AdaGrad diverges. 
To sum up, adaptive stepsizes designed for minimization, are not
\emph{time-scale adaptive} for minimax optimization and thus not \emph{parameter-agnostic}. 

To circumvent this time-scale separation bottleneck, \citet{yang2022nest} introduced an adaptive algorithm, NeAda, for problem~\eqref{eq:minimax} with nonconvex-strongly-concave objectives. NeAda is a two-loop algorithm built upon GDmax \citep{lin2020gradient} that after one primal variable update, updates the dual variable for multiple steps until a stopping criterion is satisfied in the inner loop. Although the algorithm is agnostic to the smoothness and strong-concavity parameters, there are several limitations that may undermine its performance in large-scale training: (a) In the stochastic setting, it gradually increases the number of inner loop
steps ($k$ steps for the $k$-th outer loop) to
improve the inner maximization problem accuracy, resulting in a possible waste of inner loop
updates if the maximization problem is already well solved;  (b) NeAda needs a large batchsize of order $\Omega\left(\epsilon^{-2}\right)$ to achieve the near-optimal convergence rate in theory; (c) It is not fully adaptive to the gradient noise, since it deploys different
strategies for deterministic and stochastic settings.

In this work, we address all of the issues above by proposing TiAda~(\textbf{Ti}me-scale \textbf{Ada}ptive Algorithm),
a single-loop algorithm with time-scale
adaptivity for minimax optimization.
Specifically, one of our major modifications is setting 
the effective stepsize, i.e., the scale of (stochastic) gradient used in the updates, of the primal variable
to the reciprocal of the \emph{maximum} between
the primal and dual variables' second moments, i.e.,
the sums of their past gradient norms.
This ensures the effective stepsize ratio of $x$ and $y$
being upper bounded by a decreasing sequence, which eventually reaches the desired
time-scale separation.
Taking the test function~\eqref{eq:quad} as an example,
\Cref{fig:compare_ratio} illustrates the time-scale adaptivity of TiAda: In
Stage I, the stepsize ratio quickly decreases below the threshold; in Stage II, the ratio is stabilized and
the gradient norm starts to  converge fast.

We focus on the minimax optimization (\ref{eq:minimax}) that is strongly-concave in $y$, since other nonconvex regimes are far less understood even without adaptive stepsizes. Moreover, near stationary point may not  exist in nonconvex-nonconcave~(NC-NC) problems and finding first-order local minimax point is already PPAD-complete \citep{daskalakis2021complexity}. We 
consider a constraint for the dual variable,
which is common in convex optimization with adaptive stepsizes \citep{levy2017online, levy2018online} and in the minimax optimization with non-adaptive stepsizes \citep{lin2020gradient}. 
In summary, our contributions are as follows:
\begin{itemize}
  \item We introduce the first \emph{single-loop} and \emph{fully parameter-agnostic} adaptive algorithm, TiAda,
    for nonconvex-strongly-concave (NC-SC) minimax optimization. It adapts to the necessary time-scale separation without large batchsize or any knowledge of problem-dependant parameters or target accuracy.
    TiAda finds an $\epsilon$-stationary point with an optimal complexity 
    of $\cO\left(\epsilon^{-2}\right)$ in the deterministic case, and a near-optimal sample complexity of
    $\cO\left(\epsilon^{-(4+\delta)}\right)$ for any small $\delta > 0$
    in the stochastic case. It shaves off the extra logarithmic terms in the complexity of NeAda with AdaGrad stepsize for both primal and dual variables
    \citep{yang2022nest}.
    TiAda is proven to be noise-adaptive, which is the first of its kind among nonconvex minimax optimization algorithms.

  \item While TiAda is based on AdaGrad stepsize, we generalize TiAda with other existing adaptive schemes,
    and conduct experiments on several tasks. The tasks include 1) test functions
     by \citet{yang2022nest} for showing the nonconvergence of
    GDA with adaptive schemes under poor initial stepsize
    ratios, 2) distributional robustness optimization~\citep{sinha2018certifiable}
    on MNIST dataset with a NC-SC objective,
    and 3) training the NC-NC generative adversarial networks
    on CIFAR-10 dataset. In all tasks,
    we show that TiAda converges faster and 
    is more robust
    compared with NeAda or
    GDA with other existing adaptive stepsizes.
\end{itemize}

\subsection{Related Work}

\paragraph{Adaptive gradient methods.} AdaGrad brings about an adaptive mechanism for gradient-based optimization algorithm that adjusts its stepsize by keeping the averaged past gradients. The original AdaGrad was introduced for online convex optimization and maintains coordinate-wise stepsizes. In nonconvex stochastic optimization,  AdaGrad-Norm with one learning rate for all directions is shown to achieve the same complexity as SGD \citep{ward2020adagrad, li2019convergence}, even with the high probability bound \citep{kavis2022high, li2020high}. In comparison, RMSProp \citep{hinton2012neural} and Adam \citep{kingma2015adam} use the decaying moving average of past gradients, but may suffer from divergence \citep{reddi2018convergence}. Many variants of Adam are proposed, and a wide family of them, including AMSGrad, are provided with convergence guarantees \citep{zhou2018convergence, chen2018convergence, defossez2020simple, zhang2022adam}. 
One of the distinguishing traits of adaptive algorithms is that they can achieve order-optimal rates without knowledge about the problem parameters, such as smoothness and variance of the noise, even in nonconvex optimization \citep{ward2020adagrad, levy2021storm+, kavis2019unixgrad}.

\paragraph{Adaptive minimax optimization algorithms.} The adaptive stepsize schemes are naturally extended to minimax optimization, both in theory and practice, notably in the training of GANs \citep{goodfellow2016nips, gidel2018variational}. In the convex-concave regime, several adaptive algorithms are designed based on EG and AdaGrad stepsize, and they inherit the parameter-agnostic characteristic \citep{bach2019universal, antonakopoulos2019adaptive}. In sharp contrast, when the objective function is nonconvex about one variable, most existing adaptive algorithms require knowledge of the problem parameters \citep{huang2021adagda, huang2021efficient, guo2021novel}. Very recently, it was proved that a parameter-dependent ratio between two stepsizes is necessary for GDA  in NC-SC minimax problems with non-adaptive stepsize \citep{li2022convergence} and most existing adaptive stepsizes \citep{yang2022nest}. %
\citet{heusel2017gans} shows the two-time-scaled GDA with non-adaptive stepsize or Adam will converge, 
but assuming the existence of an asymptotically stable attractor.

\paragraph{Other NC-SC minimax optimization algorithms.}
In the NC-SC setting, the most popular algorithms are GDA and GDmax, in which one primal variable update is followed by one or multiple steps of dual variable updates
Both of them can achieve $\mathcal{O}(\epsilon^{-2})$ complexity in the deterministic setting and $\mathcal{O}(\epsilon^{-4})$ sample complexity in the stochastic setting \citep{lin2020gradient, chen2021closing, nouiehed2019solving, yang2020global}, which are not improvable in the dependency on $\epsilon$ given the existing lower complexity bounds \citep{zhang2021complexity, li2021complexity}. Later, several works further improved the dependency on the condition number with more complicated algorithms in deterministic \citep{yang2021faster, lin2020near} and stochastic settings \citep{zhang2022sapd+}. All of the algorithms above do not use adaptive stepsizes and rely on prior knowledge of the problem parameters.

\subsection{Notations}

We  denote $l$ as the smoothness parameter, $\mu$ as the strong-concavity
parameter, whose formal definitions will be introduced in \Cref{assume:smoothness,assume:strong-convex}, and $\rk := l / \mu$ as the condition number.  We assume access to stochastic gradient oracle returning
$[\nabla_x F(x, y; \xi), \nabla_y F(x, y; \xi)]$.
For the minimax problem~\eqref{eq:minimax}, we denote $y^*(x) := \argmax_{y\in \mathcal{Y}} f(x, y)$ as the
solution of the inner maximization problem, $\Phi(x) := f(x, y^*(x))$
as the primal function, and $\cP_{\cY}(\cdot)$ as projection operator onto set $\cY$. 
For notational simplicity, we will use the name of an existing adaptive algorithm to
refer to the simple combination of GDA and it, i.e., setting the stepsize
of GDA to that adaptive scheme separately for both $x$ and $y$. %
For instance ``AdaGrad'' for minimax problems stands for the algorithm that uses AdaGrad stepsizes separately for $x$ and $y$ in GDA.

\section{Method}
\label{sec:method}

We formally introduce the TiAda method in \Cref{alg:tiada}, and
the major difference with AdaGrad lies in line~\ref{tiada_update}.
Like AdaGrad, TiAda stores the accumulated squared (stochastic) gradient norm of the primal and dual variables in $v_t^x$ and $v_t^y$, respectively. 
We refer to hyper-parameters $\eta^x$ and $\eta^y$ as the \emph{initial
stepsizes}, and the actual stepsizes for updating in line~\ref{tiada_update} as \emph{effective
stepsizes} which are denoted by $\eta^x_t$ and $\eta^y_t$. 
TiAda adopts effective stepsizes $\eta_t^x = \eta^x / \max\left\{v^x_{t+1}, v^y_{t+1}\right\}^{\alpha}$
and $\eta_t^y  = \eta^y / \left(v^y_{t+1}\right)^{\beta}$, while AdaGrad uses
$\eta^x / \left(v_{t+1}^x\right)^{1/2}$ and $\eta^y / \left(v_{t+1}^y\right)^{1/2}$. In \Cref{sec:theory}, our theoretical analysis suggests to choose $\alpha > 1/2 > \beta$. We will also illustrate in the next subsection that the  $\max$ structure and different $\alpha, \beta$ make our algorithm adapt to the desired time-scale separation.

For simplicity of analysis, similar to AdaGrad-Norm~\citep{ward2020adagrad},
we use the norms of gradients for updating the effective stepsizes. 
A  more practical coordinate-wise variant
that can be used for high-dimensional models is presented in
\Cref{sec:extension}.

\begin{algorithm}[ht] 
    \caption{TiAda~(Time-scale Adaptive Algorithm)}
    \setstretch{1.23}
    \begin{algorithmic}[1]
      \STATE \textbf{Input:} $(x_0, y_0)$, $v^x_0 > 0$, $v^y_0 > 0$, $\eta^x > 0$,
          $\eta^y > 0$, $\alpha > 0$, $\beta > 0$ and $\alpha > \beta$.
        \FOR{$t = 0,1,2,...$}
            \STATE sample i.i.d. $\xi^x_t$ and $\xi^y_t$, and let $g_t^x = \nabla_x F(x_t, y_t; \xi^x_t)$ and $g_t^y = \nabla_y F(x_t, y_t; \xi^y_t)$ \label{firstm_update}
            \STATE $v_{t+1}^x = v_t^x + \norm*{g_t^x}^2$ and $v_{t+1}^y = v_t^y + \norm*{g_t^y}^2$ \label{secondm_update}
            \STATE $x_{t+1} = x_t - \frac{\eta^x}{\max\left\{v^x_{t+1}, v^y_{t+1}\right\}^{\alpha}} g^x_{t}$ and
            $ y_{t+1} = \cP_{\cY}\left( y_t + \frac{\eta^y}{\left(v^y_{t+1}\right)^{\beta}} g^y_{t} \right)$
            \label{tiada_update}
        \ENDFOR
    \end{algorithmic} \label{alg:tiada}
\end{algorithm}

\subsection{The Time-Scale Adaptivity of TiAda}
\label{sec:time_adaptivity}

Current analyses of GDA with non-adaptive stepsizes require the time-scale, $\eta^x_t / \eta^y_t$, to be
smaller than a threshold depending on problem constants such as the smoothness and the strong-concavity parameter ~\citep{lin2020gradient,yang2021faster}.
The intuition is that we should not aggressively update $x$ if the
inner maximization problem has not yet been solved accurately, i.e.,
we have not found a good approximation of $y^*(x)$.
Therefore, the effective stepsize of $x$ should be small compared with that of $y$.
It is tempting to expect
adaptive stepsizes to automatically find a suitable time-scale separation.
However, the quadratic example~\eqref{eq:quad} given by \citet{yang2022nest} shattered the illusion.
In this example, the effective stepsize ratio stays the same along the run of existing adaptive algorithms, including AdaGrad (see \Cref{subfig:quad_ratio}), Adam and AMSGrad, and they 
fail to converge if the initial stepsizes are not carefully chosen (see \citet{yang2022nest} for details).
As $v_{t}^x$ and $v_{t}^y$ only separately contain the gradients of $x$ and $y$, the effective stepsizes of two variables in these adaptive methods depend on their own history, which prevents them from cooperating to adjust the ratio.

Now we explain how TiAda adapts to both the required time-scale separation and small enough stepsizes. First,
the ratio of our modified effective stepsizes is
upper bounded by a decreasing sequence when $\alpha > \beta$:
\begin{align*}
  \frac{\eta^x_t}{\eta^y_t} =
  \frac{\eta^x / \max\left\{v^x_{t+1}, v^y_{t+1}\right\}^{\alpha}}{\eta^y / \left(v^y_{t+1}\right)^{\beta}}
  \leq 
  \frac{\eta^x / \left(v^y_{t+1}\right)^{\alpha}}{\eta^y / \left(v^y_{t+1}\right)^{\beta}}
  = \frac{\eta^x}{\eta^y \left(v^y_{t+1}\right)^{\alpha - \beta}},
  \numberthis \label{eq:ratio_bound}
\end{align*}
as $v^y_{t}$ is the sum of previous 
gradient norms and is increasing. Regardless of the initial stepsize  ratio $\eta^x/\eta^y$, we expect the effective stepsize ratio to eventually drop below the desirable threshold for convergence. On the other hand, the effective stepsizes for the primal and dual variables are also upper bounded by decreasing sequences, $\eta^x / \left(v^x_{t+1}\right)^{\alpha}$ and $\eta^y / \left(v^y_{t+1}\right)^{\beta}$, respectively. Similar to AdaGrad, such adaptive stepsizes will reduce to small enough, e.g., $\cO(1/l)$, to ensure convergence.

Another way to look at the effective stepsize of $x$ is
\[
    \eta^x_t = 
  \frac{\eta^x}{\max\left\{v^x_{t+1}, v^y_{t+1}\right\}^{\alpha}}
  = 
  \frac{\left(v^x_{t+1}\right)^{\alpha}}{\max\left\{v^x_{t+1}, v^y_{t+1}\right\}^{\alpha}} 
  \cdot \frac{\eta^x}{\left(v^x_{t+1}\right)^{\alpha}}.
  \numberthis \label{eq:factor_ex}
\]
If the
gradients of $y$ are small
(i.e., $v^y_{t+1} < v^x_{t+1}$), meaning the inner maximization problem is
well solved, then the first factor becomes $1$ and the effective stepsize of $x$ is just
the second factor, similar to the AdaGrad updates. If the term $v_{t+1}^y$ dominates over $v_{t+1}^x$, the first
factor would be smaller than $1$, allowing to slow down the update of $x$ and waiting
for a better approximation of $y^*(x)$.

To demonstrate the time-scale adaptivity of TiAda, we conducted
experiments on the quadratic minimax example~\eqref{eq:quad} with $L=2$. As shown in
\Cref{subfig:quad_ratio}, while the effective stepsize ratio of AdaGrad stays
unchanged for this particular function, TiAda progressively decreases the
ratio. According to Lemma~2.1 of \citet{yang2022nest}, $1/\rk$
is the threshold where GDA starts to converge. We label the time period
before reaching this threshold as Stage I, during which as shown in
\Cref{subfig:quad_converge}, the gradient norm for TiAda increases. However, as soon as it
enters Stage II, i.e., when the ratio drops below $1/\rk$, TiAda converges fast to the stationary point.
In contrast, since the stepsize ratio of AdaGrad never reaches this threshold, the gradient norm keeps growing. 

\section{Theoretical Analysis of TiAda}
\label{sec:theory}

In this section, we study the convergence of TiAda under NC-SC setting with both deterministic and stochastic gradient oracles. 
We make the following assumptions to develop our convergence results. 

\begin{assumption}[smoothness] \label{assume:smoothness}
  Function $f(x, y)$ is $l$-smooth ($l > 0$) in both $x$ and $y$, that is, for any
  $x_1, x_2 \in \bR^{d_1}$ and $y_1, y_2 \in \cY$, we have
  \[
    \max\{\norm*{\nabla_x f(x_1, y_1) - \nabla_x f(x_2, y_2)}, \norm*{\nabla_y f(x_1, y_1) - \nabla_y f(x_2, y_2)}\}
    \leq l \left(\norm*{x_1 - x_2} + \norm*{y_1 - y_2}\right).
  \]
\end{assumption}
\begin{assumption}[strong-concavity in $y$] \label{assume:strong-convex}
  Function $f(x, y)$ is $\mu$-strongly-concave ($\mu > 0 $) in $y$, that is, for any $x \in \bR^{d_1}$ and $y_1, y_2 \in \cY$, we have
  \[
    f(x, y_1) \geq f(x, y_2) + \inp*{\nabla_y f(x, y_1)}{y_1 - y_2}
    + \frac{\mu}{2} \norm*{y_1 - y_2}^2.
  \]
\end{assumption}
\begin{assumption}[interior optimal point] \label{assume:interior_optimal}
  For any $x \in \bR^{d_1}$, $y^*(x)$ is in the interior of $\cY$.
\end{assumption}
\begin{remark}
  The last assumption ensures $\nabla_y f(x, y^*(x)) = 0$, which is important for
  AdaGrad-like stepsizes that use the sum of squared norms of past gradients
  in the denominator. 
  If the gradient about $y$ is not 0 at $y^*(x)$, the stepsize will keep decreasing even near the optimal point, leading to slow convergence.
  This assumption could be potentially alleviated by using generalized AdaGrad stepsizes \citep{bach2019universal}. 
 \end{remark}

We aim to find a near stationary point for the minimax problem~\eqref{eq:minimax}. Here, $(x, y)$ is defined to be an $\epsilon$ stationary point if $\|\nabla_x f(x, y)\|\leq \epsilon$ and  $\|\nabla_y f(x, y)\|\leq \epsilon$ in the deterministic setting, or $\mathbb{E}\|\nabla_x f(x, y)\|^2\leq \epsilon^2$ and  $\mathbb{E}\|\nabla_y f(x, y)\|^2\leq \epsilon^2$ in the stochastic setting, where the expectation is taken over all the randomness in the algorithm. This stationarity notion can be easily translated to the near-stationarity of the primal function $\Phi(x) = \max_{y \in \mathcal{Y}} (x, y)$ \citep{yang2021faster}.
Under our analyses,
TiAda is able to achieve the optimal $\cO\left(\epsilon^{-2}\right)$
complexity in the deterministic setting and a near-optimal
$\cO\left(\epsilon^{-(4+\delta)}\right)$ sample complexity for any small $\delta > 0$
in the stochastic setting.

\subsection{Deterministic Setting}

In this subsection, we assume to have access to the exact gradients of 
$f(\cdot, \cdot)$, and therefore we can replace $\nabla_x F(x_t, y_t; \xi^x_t)$ and $ \nabla_y F(x_t, y_t; \xi^y_t)$ by $\nabla_x f(x_t, y_t)$ and $\nabla_y f(x_t, y_t)$ in Algorithm \ref{alg:tiada}. 

\begin{theorem}[deterministic setting] \label{theorem:tiada_determ}

  Under \Cref{assume:strong-convex,assume:smoothness,assume:interior_optimal},
  \Cref{alg:tiada} with deterministic gradient
  oracles satisfies that for any $0 < \beta < \alpha < 1$, after $T$ iterations,
  \begin{align*}
    \frac{1}{T}\sum_{t=0}^{T-1}\norm*{\nabla_x f(x_t, y_t)}^2
    + \frac{1}{T}\sum_{t=0}^{T-1}\norm*{\nabla_y f(x_t, y_t)}^2
  \leq \cO\left(\frac{1}{T}\right).
  \end{align*}
\end{theorem}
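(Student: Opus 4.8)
The plan is to recast the claim as a \emph{boundedness} statement. Since $v_{T}^{x}=v_{0}^{x}+\sum_{t=0}^{T-1}\|\nabla_{x}f(x_{t},y_{t})\|^{2}$ and $v_{T}^{y}=v_{0}^{y}+\sum_{t=0}^{T-1}\|\nabla_{y}f(x_{t},y_{t})\|^{2}$, the asserted $\cO(1/T)$ average is \emph{equivalent} to showing that $v_{T}^{x}$ and $v_{T}^{y}$ stay bounded by a constant independent of $T$; dividing the accumulated gradient mass by $T$ then gives the rate. I would therefore aim to prove $\sup_{T}\max\{v_{T}^{x},v_{T}^{y}\}<\infty$. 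Throughout I write $a_{t}:=\eta^{x}/\max\{v_{t+1}^{x},v_{t+1}^{y}\}^{\alpha}$ and $b_{t}:=\eta^{y}/(v_{t+1}^{y})^{\beta}$ for the two effective stepsizes, $\kappa:=l/\mu$, $\Phi(x):=f(x,y^{*}(x))$ with $\Phi^{*}:=\inf_{x}\Phi(x)$ (finite, as is standard here), and $\delta_{t}:=\|y_{t}-y^{*}(x_{t})\|^{2}$ for the dual tracking error.

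First I would record the structural facts following from \Cref{assume:smoothness,assume:strong-convex,assume:interior_optimal}: (i) $\Phi$ is $L_{\Phi}$-smooth with $\nabla\Phi(x)=\nabla_{x}f(x,y^{*}(x))$ and $y^{*}(\cdot)$ is $\kappa$-Lipschitz (Danskin-type results, cf.\ \citet{lin2020gradient}); (ii) the bias of the used gradient relative to the true primal gradient is controlled by the dual error, $\|\nabla_{x}f(x_{t},y_{t})-\nabla\Phi(x_{t})\|\le l\sqrt{\delta_{t}}$; and (iii) since $\nabla_{y}f(x,y^{*}(x))=0$, smoothness together with the Polyak--\L{}ojasiewicz inequality implied by strong concavity gives the two-sided relation $\mu^{2}\delta_{t}\le\|\nabla_{y}f(x_{t},y_{t})\|^{2}\le l^{2}\delta_{t}$, letting me pass freely between the dual gradient norm and $\delta_{t}$.

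Next I would derive two one-step inequalities. Applying $L_{\Phi}$-smoothness of $\Phi$ along $x_{t+1}-x_{t}=-a_{t}\nabla_{x}f(x_{t},y_{t})$ and using (ii) with Young's inequality yields the \emph{primal descent}
\[
\Phi(x_{t+1}) \le \Phi(x_t) - \tfrac{a_t}{2}\|\nabla_x f(x_t,y_t)\|^2 + \tfrac{l^2}{2} a_t \delta_t + \tfrac{L_\Phi}{2} a_t^2 \|\nabla_x f(x_t,y_t)\|^2 .
\]
Combining nonexpansiveness of $\cP_{\cY}$, the strong-concavity estimate $\langle\nabla_{y}f(x_{t},y_{t}),y_{t}-y^{*}(x_{t})\rangle\le-\tfrac{\mu}{2}\delta_{t}$, and $\kappa$-Lipschitzness of $y^{*}$ applied to $x_{t+1}-x_{t}$ gives the \emph{dual recursion} $\delta_{t+1}\le(1-\tfrac{\mu}{2}b_{t})\delta_{t}+C_{1}b_{t}^{2}\|\nabla_{y}f(x_{t},y_{t})\|^{2}+C_{2}\tfrac{\kappa^{2}}{\mu}\tfrac{a_{t}^{2}}{b_{t}}\|\nabla_{x}f(x_{t},y_{t})\|^{2}$, valid once $\mu b_{t}\le 1$. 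To handle the resulting sums I would use the AdaGrad-Norm summation lemma (for $v_{t+1}=v_{t}+\|g_{t}\|^{2}$ and $p>0$, $\sum_{t}\|g_{t}\|^{2}/(v_{t+1})^{p}$ is $\cO(v_{T}^{1-p})$, $\cO(\log v_{T})$, or $\cO(1)$ according as $p<1$, $=1$, $>1$) together with the pointwise stepsize lower bounds $a_{t}\ge\eta^{x}/\max\{v_{T}^{x},v_{T}^{y}\}^{\alpha}$ and $b_{t}\ge\eta^{y}/(v_{T}^{y})^{\beta}$. These lower bounds reduce the whole problem to showing that the two weighted sums $\sum_{t}a_{t}\|\nabla_{x}f(x_{t},y_{t})\|^{2}$ and $\sum_{t}b_{t}\|\nabla_{y}f(x_{t},y_{t})\|^{2}$ are bounded by $T$-independent constants: indeed $\tfrac{\eta^{y}(v_{T}^{y}-v_{0}^{y})}{(v_{T}^{y})^{\beta}}\le\sum_{t}b_{t}\|\nabla_{y}f\|^{2}$ forces $(v_{T}^{y})^{1-\beta}=\cO(1)$, hence $v_{T}^{y}$ bounded (using $\beta<1$), and then the analogous primal inequality with $\max\{v_{T}^{x},v_{T}^{y}\}$ forces $v_{T}^{x}$ bounded (using $\alpha<1$).

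The crux --- and the step I expect to be the main obstacle --- is producing those $T$-independent constants, which requires controlling the \emph{second-order stepsize terms} ($a_{t}^{2},b_{t}^{2}$) and the \emph{primal--dual coupling term} $\tfrac{a_{t}^{2}}{b_{t}}\|\nabla_{x}f\|^{2}$ without circularity. Naively bounding these by the AdaGrad lemma yields powers of $v_{T}$ too large to reabsorb (e.g.\ $(v_{T}^{y})^{1-2\beta}$ need not be dominated by $(v_{T}^{y})^{1-\beta}$), so I would instead exploit that both stepsizes vanish and, decisively, that the ratio $a_{t}/b_{t}\le\eta^{x}/\bigl(\eta^{y}(v_{t+1}^{y})^{\alpha-\beta}\bigr)$ from \eqref{eq:ratio_bound} is \emph{decreasing} precisely because $\alpha>\beta$. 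This is the ``Stage~I / Stage~II'' split of \Cref{sec:time_adaptivity}: once $v_{t}^{y}$ passes a problem-dependent threshold, $b_{t}$ is small enough that $C_{1}l^{2}b_{t}^{2}\delta_{t}\le\tfrac{\mu}{4}b_{t}\delta_{t}$ is absorbed into the left side of the telescoped dual recursion, and $a_{t}/b_{t}$ is small enough (via \eqref{eq:factor_ex}) that the coupling term is absorbed into the primal descent's $-\tfrac{a_{t}}{2}\|\nabla_{x}f\|^{2}$; the complementary Stage~I range contributes only a constant because $v_{t}^{y}$ is bounded there. Carrying out this absorption yields $\sum_{t}b_{t}\delta_{t}=\cO(1)$ and $\sum_{t}a_{t}\|\nabla_{x}f\|^{2}=\cO(1)$ with all constants depending only on the (user-chosen) hyper-parameters, the initial suboptimality, and the threshold, hence independent of $T$. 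I would expect the careful bookkeeping of these constants, the verification that every exponent comparison holds across the branches $p\lessgtr 1$ of the summation lemma, and the precise placement of the Stage~I/Stage~II threshold, to be the most delicate part of the write-up; the roles of the hypotheses are transparent, with $\alpha>\beta$ making the ratio decay and $\alpha,\beta\in(0,1)$ making the final $(v_{T})^{1-\alpha}$ and $(v_{T}^{y})^{1-\beta}$ terms blow up and thereby cap $v_{T}^{x},v_{T}^{y}$.
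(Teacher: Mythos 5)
Your proposal is correct and follows essentially the same route as the paper's own proof (\Cref{theorem:tiada_determ_formal}): reduce the $\cO(1/T)$ rate to $T$-independent boundedness of $v_T^x$ and $v_T^y$, combine the descent inequality for $\Phi$ (bias controlled by $\kappa\|\nabla_y f(x_t,y_t)\|$ via strong concavity) with a dual contraction recursion valid after a first crossing time $t_0$ at which the dual stepsize is small enough (the Stage~I/Stage~II split, Stage~I contributing only constants since $v^y$ is bounded there), exploit $\alpha>\beta$ through the decaying ratio bound \eqref{eq:ratio_bound}, control the second-order sums with \Cref{lemma:sum}, and close with a self-bounding inequality in $v_T^x$. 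The only divergence is in the bookkeeping of the coupling term $\frac{a_t^2}{b_t}\|\nabla_x f(x_t,y_t)\|^2$: you absorb it termwise in Stage~II using the small ratio $a_t/b_t$, while the paper pushes it through \Cref{lemma:sum} into sublinear powers (or logarithms) of $v_T^x$ and finishes via the implication that $x \le \sum_i b_i x^{\alpha_i}$ with all $\alpha_i < 1$ forces $x$ bounded, with a case analysis on whether $v_T^y$ or $v_T^x$ dominates and on the exponent branches $2\alpha-\beta \lessgtr 1$ and $2\alpha \lessgtr 1+\beta$ --- a cosmetic rather than substantive difference.
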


This theorem implies that for any initial stepsizes, TiAda finds an $\epsilon$-stationary point within $\cO(\epsilon^{-2})$ iterations. Such complexity is comparable to that of nonadaptive methods, such as vanilla GDA \citep{lin2020gradient}, and is optimal in the dependency of $\epsilon$ \citep{zhang2021complexity}. Like NeAda \citep{yang2022nest}, TiAda does not need any prior knowledge about $\mu$ and $l$, but it improves over NeAda by removing the  logarithmic term in the complexity. 
Notably,  we provide a unified analysis for a wide range of $\alpha$ and $\beta$, while most existing literature on 
AdaGrad-like stepsizes only validates a specific hyper-parameter, e.g., $\alpha = 1/2$ %
in minimization problems \citep{ward2020adagrad, kavis2019unixgrad}.

\subsection{Stochastic Setting}
\label{subsec:stochastic}

In this subsection, we assume the access to a stochastic gradient oracle, that returns 
unbiased noisy gradients, $\nabla_x F(x, y; \xi)$ and $\nabla_y F(x, y; \xi)$. 
Also, we make the following additional assumptions.

\begin{assumption}[stochastic gradients]
\label{assume:stochastic_grad} 
For $z \in \{x, y\}$,
we have $\Ep[\xi]{\nabla_z F(x, y, \xi)} = \nabla_z f(x, y)$.
In addition, there exists a constant $G$ such that
$\norm*{\nabla_z F(x, y, \xi)} \leq G$
for any $x \in \bR^{d_1}$ and $y \in \cY$.
\end{assumption}

\begin{assumption}[bounded primal function value] \label{assume:bound_func_val}
  There exists a constant $\Phi_{\max} \in \bR$ such that for any $x \in
  \bR^{d_1}$, $\Phi(x)$ is upper bounded by $\Phi_{\max}$.
\end{assumption}
\begin{remark}
  The bounded gradients and function value are assumed in many works on adaptive algorithms \citep{kavis2022high, levy2021storm+}. 
  This implies the domain of $y$ %
  is bounded,
  which is also assumed in the analyses of 
  AdaGrad   \citep{levy2017online, levy2018online}.
  In neural networks with rectified activations,
  because of its scale-invariance property~\citep{dinh2017sharp},
  imposing boundedness of $y$ does not affect the expressiveness.
  Wasserstein GANs~\citep{arjovsky2017wasserstein} also use
  projections on the critic to restrain the weights on a small cube
  around the origin.
\end{remark}

\begin{assumption}[second order Lipschitz continuity for $y$] \label{assume:lipschitz}
  For any $x_1, x_2 \in \bR^{d_1}$ and $y_1, y_2 \in \cY$,
  there exists constant $L$ such that 
  \[
    \norm*{\nabla_{xy}^2 f(x_1, y_1) - \nabla_{xy}^2 f(x_2, y_2)} 
    \leq L \left(\norm*{x_1 - x_2} + \norm*{y_1 - y_2}\right)
  \]
  and 
  \[
    \norm*{\nabla_{yy}^2 f(x_1, y_1) - \nabla_{yy}^2 f(x_2, y_2)} 
    \leq L \left(\norm*{x_1 - x_2} + \norm*{y_1 - y_2}\right).
  \]
\end{assumption}
\begin{remark}
  
  \citet{chen2021closing}
  also impose this assumption to achieve the optimal $\cO\left(\epsilon^{-4}\right)$ complexity for GDA with non-adaptive stepsizes for solving NC-SC minimax problems. 
  Together with \Cref{assume:interior_optimal}, we can show that
  $y^*(\cdot)$ is smooth. Nevertheless, 
  without this assumption, \citet{lin2020gradient} only show a worse complexity of $\cO\left(\epsilon^{-5}\right)$ for GDA without large batchsize.
\end{remark}

\begin{theorem}[stochastic setting] \label{theorem:tiada_stoc}
  Under \Cref{assume:strong-convex,assume:smoothness,assume:stochastic_grad,assume:lipschitz,assume:interior_optimal,assume:bound_func_val},
  \Cref{alg:tiada} with stochastic gradient
  oracles satisfies that for any $0 < \beta < \alpha < 1$, after $T$ iterations,
  \begin{align*}
  \frac{1}{T} \Ep{\sum_{t=0}^{T-1}\norm*{\nabla_x f(x_t, y_t)}^2
  + \sum_{t=0}^{T-1}\norm*{\nabla_y f(x_t, y_t)}^2}
  \leq \cO\left(T^{\ra - 1} + T^{-\ra}
  + T^{\beta - 1} + T^{-\beta}\right).
  \end{align*}
\end{theorem}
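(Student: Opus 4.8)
The plan is to follow the standard nonconvex--strongly-concave reduction to the primal function $\Phi(x) = f(x, y^*(x))$ while carrying the dual tracking error $\norm*{y_t - y^*(x_t)}^2$ along the trajectory, and to combine the two in a single Lyapunov function whose telescoping is driven by the adaptive stepsizes. First I would record the deterministic consequences of the assumptions: \Cref{assume:smoothness,assume:strong-convex,assume:interior_optimal} give that $y^*(\cdot)$ is $\kappa$-Lipschitz with $\kappa = l/\mu$ and that $\Phi$ is $L_\Phi$-smooth with $L_\Phi = \cO(l\kappa)$, while the second-order hypothesis \Cref{assume:lipschitz} upgrades $y^*(\cdot)$ to being Lipschitz-smooth; this last fact is exactly what tightens the dual recursion enough to reach $\epsilon^{-4}$ instead of $\epsilon^{-5}$. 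Since $\nabla_y f(x_t, y^*(x_t)) = 0$ by \Cref{assume:interior_optimal} and $\norm*{\nabla_x f(x_t, y_t) - \nabla\Phi(x_t)} \leq l\norm*{y_t - y^*(x_t)}$, both quantities in the theorem satisfy $\norm*{\nabla_x f(x_t,y_t)}^2 \leq 2\norm*{\nabla\Phi(x_t)}^2 + 2l^2\norm*{y_t - y^*(x_t)}^2$ and $\norm*{\nabla_y f(x_t,y_t)}^2 \leq l^2\norm*{y_t - y^*(x_t)}^2$, so it suffices to bound the time-averages of $\norm*{\nabla\Phi(x_t)}^2$ and $\norm*{y_t - y^*(x_t)}^2$.

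For the primal estimate I would apply $L_\Phi$-smoothness to $x_{t+1} = x_t - \eta_t^x g_t^x$ and take conditional expectation given the history $\mathcal{F}_t$. The obstacle specific to adaptive methods is that $\eta_t^x = \eta^x / \max\{v_{t+1}^x, v_{t+1}^y\}^\alpha$ is correlated with $g_t^x$ through $v_{t+1}^x$, so I would split $\eta_t^x = \tilde\eta_t^x - (\tilde\eta_t^x - \eta_t^x)$ with the $\mathcal{F}_t$-measurable surrogate $\tilde\eta_t^x = \eta^x / \max\{v_t^x, v_t^y\}^\alpha \geq \eta_t^x$. The predictable part produces $\tilde\eta_t^x\inp*{\nabla\Phi(x_t)}{\nabla_x f(x_t,y_t)}$, which after Young's inequality and the gradient-mismatch bound yields a descent of order $\tilde\eta_t^x(\tfrac12\norm*{\nabla\Phi(x_t)}^2 - \tfrac{l^2}{2}\norm*{y_t - y^*(x_t)}^2)$. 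The remaining bias $\tilde\eta_t^x - \eta_t^x$, the quadratic term, and the noise are all bounded using $\norm*{g_t^z} \leq G$ from \Cref{assume:stochastic_grad}: a mean-value estimate gives $0 \leq \tilde\eta_t^x - \eta_t^x = \cO\!\left(\max\{\norm*{g_t^x}^2, \norm*{g_t^y}^2\} / \max\{v_t^x, v_t^y\}^{\alpha+1}\right)$, whose sum telescopes to $\cO(1)$, and the AdaGrad summation lemma gives $\sum_t (\eta_t^x)^2\norm*{g_t^x}^2 \leq (\eta^x)^2 \sum_t \norm*{g_t^x}^2 / (v_{t+1}^x)^{2\alpha} = \cO(T^{1-2\alpha})$. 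Telescoping and using the bounded range of $\Phi$ (\Cref{assume:bound_func_val}) then bounds $\sum_t \tilde\eta_t^x\norm*{\nabla\Phi(x_t)}^2$ up to the penalty $\tfrac{l^2}{2}\sum_t \tilde\eta_t^x\norm*{y_t - y^*(x_t)}^2$.

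The dual estimate uses $\mu$-strong concavity together with nonexpansiveness of $\cP_{\cY}$ to obtain a one-step contraction $\norm*{y_{t+1} - y^*(x_t)}^2 \leq (1 - \Theta(\mu\eta_t^y))\norm*{y_t - y^*(x_t)}^2 + \cO((\eta_t^y)^2\norm*{g_t^y}^2)$, once more peeling off an $\mathcal{F}_t$-measurable surrogate $\tilde\eta_t^y$ to absorb the noise. Accounting for the moving target through $\norm*{y^*(x_{t+1}) - y^*(x_t)} \leq \kappa\eta_t^x\norm*{g_t^x}$ and a $(1+\gamma)$ split couples a term of order $(\eta_t^x)^2$ back into the dual recursion. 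I would then form the potential $V_t = \Phi(x_t) + \lambda\norm*{y_t - y^*(x_t)}^2$ and telescope the combined inequality. The decisive step --- and the main obstacle --- is to choose $\lambda$ so that the dual contraction $\lambda\mu\eta_t^y\norm*{y_t - y^*(x_t)}^2$ dominates the coupling penalty $l^2\tilde\eta_t^x\norm*{y_t - y^*(x_t)}^2$ inherited from the primal descent; this requires $\eta_t^x / \eta_t^y$ to lie below a constant threshold, which is precisely the time-scale separation guaranteed by the decreasing bound $\eta_t^x/\eta_t^y \leq \eta^x/(\eta^y (v_{t+1}^y)^{\alpha - \beta})$ from \eqref{eq:ratio_bound}, and hence the reason the hypothesis $\alpha > \beta$ enters.

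Because the ratio bound only crosses the threshold after $v_{t+1}^y$ exceeds a fixed constant, the genuine difficulty is controlling the transient ``Stage~I'' in which the contraction need not dominate: I would split on whether $v_T^y$ has grown past the threshold. If it has, the finitely many early steps contribute only a bounded additive term to the potential (using $\norm*{g_t^z}\leq G$ and the telescoped bias), and the remaining steps enjoy the clean contraction; if $v_T^y$ stays below the threshold, then $\sum_t \norm*{g_t^y}^2$ is bounded and the dual error is directly controlled through the large, slowly decaying $\eta_t^y$. In either case I finish with the deterministic-style lower bounds $\eta_t^x \geq \Omega(T^{-\alpha})$ and $\eta_t^y \geq \Omega(T^{-\beta})$, which follow from $v_{t+1}^z \leq v_0^z + TG^2$ together with the $\max$ only enlarging the denominator. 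Converting $\sum_t \tilde\eta_t^x\norm*{\nabla\Phi(x_t)}^2 = \cO(1 + T^{1-2\alpha})$ via $\tilde\eta_t^x \geq \Omega(T^{-\alpha})$ gives $\tfrac1T\sum_t\norm*{\nabla\Phi(x_t)}^2 = \cO(T^{\alpha-1} + T^{-\alpha})$, and the symmetric dual computation gives $\cO(T^{\beta-1} + T^{-\beta})$; taking total expectation and recombining through the reduction of the first paragraph yields the stated rate.
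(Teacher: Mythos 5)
Your overall architecture---primal descent on $\Phi$ with a surrogate stepsize to decouple $\eta^x_t$ from $g^x_t$, a contracting dual recursion, a two-stage split driven by the growth of $v^y_{t+1}$ past a constant threshold, and final conversion through $\eta^x_t = \Omega(T^{-\alpha})$, $\eta^y_t = \Omega(T^{-\beta})$---parallels the paper's proof, and most of those components are sound. The genuine gap is in how you handle the moving target in the dual recursion. You bound $\|y^*(x_{t+1}) - y^*(x_t)\| \leq \kappa \eta^x_t \|g^x_t\|$ and fold it in with a $(1+\gamma)$ Young split, asserting this ``couples a term of order $(\eta^x_t)^2$ back into the dual recursion.'' But to preserve the contraction the Young parameter must be of order $\mu \eta^y_t$, so the moving-target term actually enters with coefficient $\Theta\bigl(1/(\mu\eta^y_t)\bigr)$, i.e.\ as
\[
  \frac{\kappa^2 (\eta^x_t)^2}{\mu \eta^y_t}\,\|g^x_t\|^2
  \;\leq\; \frac{\kappa^2 (\eta^x)^2}{\mu \eta^y}\cdot\frac{\|g^x_t\|^2}{\bigl(v^x_{t+1}\bigr)^{2\alpha - \beta}},
\]
using $\max\{v^x_{t+1}, v^y_{t+1}\}^{2\alpha} \geq (v^y_{t+1})^{\beta}(v^x_{t+1})^{2\alpha-\beta}$. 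Since $\|g^x_t\|^2$ is the \emph{stochastic} gradient, this term cannot be absorbed into the left-hand side by your time-scale-separation argument---that mechanism only works for constant multiples of the deterministic $\|\nabla_x f(x_t,y_t)\|^2$---and summing it with \Cref{lemma:sum} yields $\Theta\bigl(T^{1-(2\alpha-\beta)}\bigr)$ in the worst case; after dividing the telescoped potential by $T\min_t \tilde\eta^x_t \sim T^{1-\alpha}$, it contributes $T^{-(\alpha-\beta)}$ to the averaged bound. For $\alpha$ and $\beta$ close to $1/2$---exactly the regime needed for the near-optimal $\epsilon^{-(4+\delta)}$ complexity---this is essentially non-decaying, so your sketch establishes at best a rate of order $T^{-(\alpha-\beta)}$, not the claimed $\cO\bigl(T^{\alpha-1} + T^{-\alpha} + T^{\beta-1} + T^{-\beta}\bigr)$ for all $0 < \beta < \alpha < 1$.

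This is precisely where \Cref{assume:lipschitz} must do work: you correctly name it in your first paragraph (``what tightens the dual recursion'') but then never actually use it. The paper's \Cref{lemma:v_large} expands $\|y_{t+1} - y^*_{t+1}\|^2$ exactly and linearizes the target motion as $y^*_{t+1} - y^*_t = \nabla y^*(x_t)(x_{t+1}-x_t) + \cO\bigl(\widehat{L}\|x_{t+1}-x_t\|^2\bigr)$ via \Cref{lemma:smooth_y_star}. The first-order piece is then split three ways: a deterministic part, which after Young's inequality appears as a multiple of $\|\nabla_x f(x_t, y_t)\|^2$ with coefficient $\frac{2\kappa^2(\eta^x)^2}{\mu(\eta^y)^2}\,(v^y_{t+1})^{-(2\alpha-2\beta)}$---this is what the Stage-II threshold $v^y_{t+1} > C$ with $C = \bigl(8l\kappa^3(\eta^x)^2/(\mu(\eta^y)^2)\bigr)^{1/(2\alpha-2\beta)}$ shrinks to $\tfrac{1}{4l\kappa}$ so it can be absorbed with factor $\tfrac12$ into the left-hand side; a martingale part, which requires a stepsize-index shift (since $\eta_t$ depends on $\xi^x_t$) and contributes only $\cO\bigl((v^y_T)^{\beta}\bigr) = \cO(T^{\beta})$; and the second-order remainder, handled via Young and bounded gradients at cost $\cO(T^{1-\alpha})$. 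Without this decomposition the argument cannot reach the stated rate (this is also the structural reason GDA without \Cref{assume:lipschitz} only attains $\cO(\epsilon^{-5})$). A smaller point: your claim $\sum_t (\eta^x_t)^2\|g^x_t\|^2 = \cO(T^{1-2\alpha})$ picks up a spurious $\log T$ at $\alpha = 1/2$; the paper sidesteps this by dividing the descent inequality by $\eta_t$ \emph{before} telescoping, so the quadratic term carries a single power of the stepsize and is summed with exponent $\alpha < 1$, at the price of an Abel summation on $\sum_t \bigl(\Phi(x_t) - \Phi(x_{t+1})\bigr)/\eta_t$, which is where \Cref{assume:bound_func_val} is genuinely needed.
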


  TiAda can achieve the complexity arbitrarily close to the optimal sample complexity, $\cO\left(\epsilon^{-4}\right)$ \citep{li2021complexity},
  by choosing 
  $\alpha$ and $\beta$ arbitrarily close to $0.5$.
  Specifically, TiAda achieves a complexity of 
  $\cO\left(\epsilon^{-(4+\delta)}\right)$ for any small $\delta > 0$ if we set
  $\alpha = 0.5 + \delta / (8 + 2 \delta)$ and $\beta = 0.5 - \delta / (8 + 2 \delta)$. Notably, this matches the complexity of 
  NeAda with AdaGrad stepsizes for both variables \citep{yang2022nest}. %
  NeAda may attain $\widetilde{\cO}(\epsilon^{-4})$ complexity with more complicated subroutines for $y$.

\Cref{theorem:tiada_stoc} implies that TiAda is fully agnostic to problem parameters, e.g., $\mu$, $l$ and $\sigma$. GDA with non-adaptive stepsizes \citep{lin2020gradient} and vanilla single-loop adaptive methods \citep{huang2021adagda}, such as AdaGrad and AMSGrad,  all require knowledge of these parameters. Compared with the only parameter-agnostic algorithm, NeAda, our algorithm has several advantages. First, TiAda is a single-loop algorithm, while NeAda \citep{yang2022nest}
needs increasing inner-loop steps and a huge batchsize of order $\Omega\left(\epsilon^{-2}\right)$ to achieve its best complexity. Second, our stationary guarantee is for $\mathbb{E} \|\nabla_x f(x, y)\|^2 \leq \epsilon^2$, which is stronger than $\mathbb{E} \|\nabla_x f(x, y)\| \leq \epsilon$ guarantee in NeAda. Last but not least, although NeAda does not need to know the exact value of variance $\sigma$ in the stochastic setting when $\sigma > 0$, NeAda uses a different stopping criterion for the inner loop 
in the deterministic setting when $\sigma = 0$, so it still needs partial information about $\sigma$. In comparison, TiAda achieves the (near) optimal complexity in both settings with the same strategy.

  Consistent with the intuition of time-scale adaptivity in \Cref{sec:time_adaptivity},
  the convergence result can be derived in two stages. In Stage I,
  according to the upper bound of the ratio in \Cref{eq:ratio_bound},
  we expect the term $1/\left(v^y_{t+1}\right)^{\alpha-\beta}$
   reduces to a constant $c$, a desirable time-scale separation. 
  This means that $v^y_{t+1}$ has to grow to
  nearly
  $(1/c)^{1/(\alpha - \beta)}$.
In Stage II,
  when the time-scale separation is satisfied, TiAda converges at a speed
  specified in \Cref{theorem:tiada_stoc}. This indicates
  that the proximity between $\alpha$ and $\beta$ affects
  the speed trade-off between Stage I and II. When $\alpha$ and $\beta$ are close,
  we have a faster overall convergence rate close to the optimality, but
  suffer from a longer transition phase in Stage I, albeit by only a constant term. We also present an empirical ablation study on the convergence behavior with different choices of $\alpha$ and $\beta$ in \Cref{subsec:ablation}.

\begin{remark}
  In TiAda, the update of $x$ requires to know the gradients of $y$ (or
  $v^y_{t+1}$). However, in some applications that
  concern about privacy, one player might not access the information 
  about the other player~\citep{koller1995generating,foster2006regret,he2016opponent}.
  Therefore, we also consider a variant of TiAda without taking the maximum of gradient norms, i.e.,
  setting the effective stepsize of $x$ in \Cref{alg:tiada} to
  $\eta^x / \left(v^x_{t+1}\right)^{\alpha}$. This variant  achieves
  a sub-optimal complexity of $\cOt\left(\epsilon^{-6}\right)$. This result further justifies the importance of coordination between adaptive stepsizes of two players for achieving faster convergence in minimax optimization. The algorithm and convergence results are presented in \Cref{sec:no_max}.
 \end{remark}

\section{Experiments}
\label{sec:exp}

In this section, we first present extensions of TiAda
that accommodate other adaptive schemes besides AdaGrad and 
are more practical in deep models.
Then we present empirical results of TiAda and compare
it with (i) simple combinations of GDA and adaptive stepsizes, which are commonly
used in practice, and (ii) NeAda with different adaptive mechanisms~\citep{yang2022nest}. Our experiments include test functions proposed by
\citet{yang2022nest}, the NC-SC distributional robustness optimization~\citep{sinha2018certifiable},
and training the NC-NC 
Wasserstein GAN with gradient penalty~\citep{gulrajani2017improved}.
We believe that this 
not only validates our theoretical results but also shows the potential of our algorithm in real-world scenarios.
To show the strength of being parameter-agnostic of TiAda,
in all the experiments, we merely select $\alpha=0.6$ and $\beta=0.4$
without further tuning those two hyper-parameters.
All experimental details including the neural network structure and
hyper-parameters are described in \Cref{sec:exp_detail}.

\subsection{Extensions to Other Adaptive Stepsizes and High-dimensional Models}
\label{sec:extension}

Although we design TiAda upon AdaGrad-Norm, it is easy and intuitive to apply
other adaptive schemes like Adam and AMSGrad.
To do so, for $z \in \{x, y\}$, we replace
the definition of $g^z_t$ and $v^z_{t+1}$ in line~\ref{firstm_update} and~\ref{secondm_update}
of \Cref{alg:tiada} to
\[
  g^z_t = \beta_t^z g^z_{t-1} + (1 - \beta_t^z) \nabla_z F(x_t, y_t; \xi^z_t), \quad
  v^z_{t+1} = \psi\left(v_0, \left\{ \norm*{\nabla_z F(x_i, y_i; \xi^z_i)}^2 \right\}_{i=0}^t \right),
\]
where $\{\beta_t^z\}$ is the momentum parameters and $\psi$ is the second moment
function. Some common stepsizes that fit in this generalized framework can be
seen in \Cref{tab:adaptive_schemes} in the appendix.
Since Adam is widely used in many deep learning tasks, we also implement
generalized TiAda with Adam stepsizes in our experiments for real-world applications,
and we label it ``TiAda-Adam''.

Besides generalizing TiAda to accommodate different stepsize schemes,
for high-dimensional models,
we also provide a coordinate-wise version of TiAda. 
Note that we cannot simply change everything in \Cref{alg:tiada} to be
coordinate-wise, because we use the gradients of $y$ in the stepsize of $x$
and there are no corresponding relationships between the coordinates of $x$ and
$y$. Therefore, in light of our intuition in \Cref{eq:factor_ex},
we use the global accumulated gradient norms to dynamically adjust the stepsize of $x$.
Denote the second moment (analogous to $v^x_{t+1}$ in \Cref{alg:tiada}) for the $i$-th coordinate of $x$ at the
$t$-th step as $v^x_{t+1, i}$ and globally $v^x_{t+1} := \sum_{i=1}^{d_1} v^x_{t+1, i}$.
We also use similar notations for $y$.
Then, the update for the $i$-th
parameter, i.e., $x^i$ and $y^i$, can be written as
\[
  \begin{cases}
  x^i_{t+1} = x_t^{i} - \frac{\left(v^x_{t+1}\right)^{\ra}}{\max\left\{v^x_{t+1}, v^y_{t+1}\right\}^{\ra}}
  \cdot \frac{\eta^x}{\left(v^x_{t+1, i}\right)^{\ra}} \nabla_{x^i} f(x_t, y_t) \\
  y^i_{t+1} = y_t^{i} + \frac{\eta^y}{\left(v^y_{t+1, i}\right)^{\rb}} \nabla_{y^i} f(x_t, y_t).
  \end{cases}
\]
Our results in the following subsections provide strong empirical evidence for the effectiveness of these TiAda variants, and  developing convergence guarantees for
them would be an interesting future work. We believe our proof techniques for TiAda, together with existing convergence results for coordinate-wise AdaGrad and AMSGrad~\citep{zhou2018convergence,chen2018convergence,defossez2020simple}, can shed light on the theoretical analyses of these variants.

\subsection{Test Functions}

\begin{figure}[t]
    \centering
    \begin{subfigure}[b]{0.255\textwidth}
      \centering
      \includegraphics[width=\textwidth]{./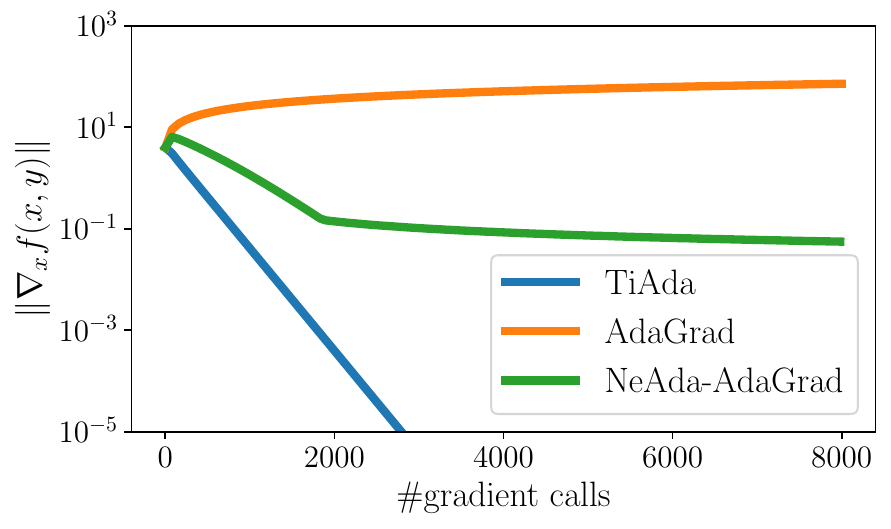}
      \caption{$r = 1$}
    \end{subfigure}
    \begin{subfigure}[b]{0.245\textwidth}
      \centering
      \includegraphics[width=\textwidth]{./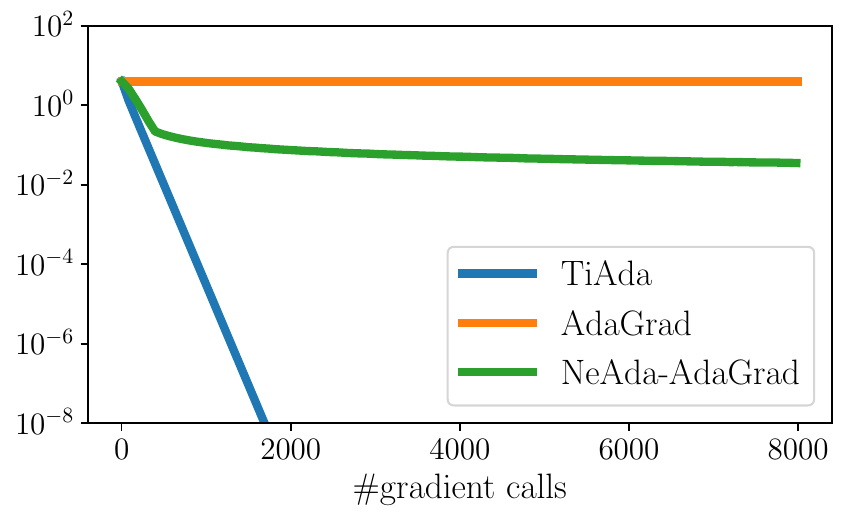}
      \caption{$r = 1/2$}
    \end{subfigure}
    \begin{subfigure}[b]{0.24\textwidth}
      \centering
      \includegraphics[width=\textwidth]{./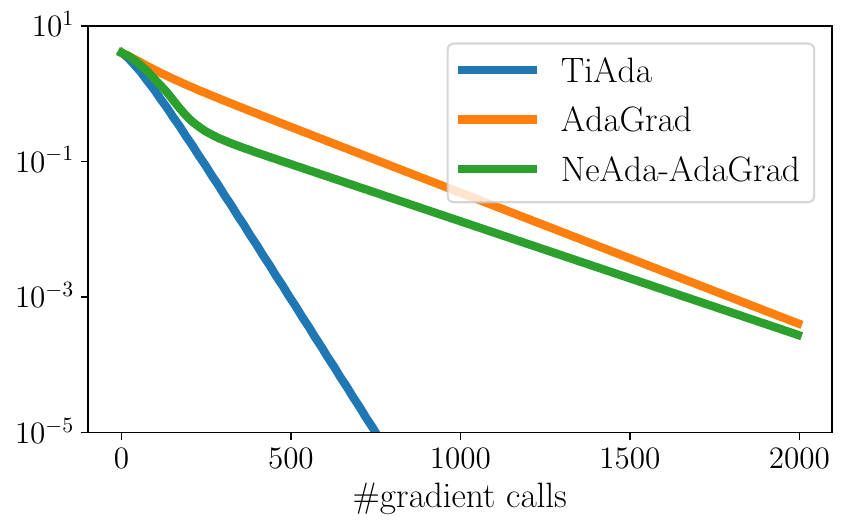}
      \caption{$r = 1/4 $}
    \end{subfigure}
    \begin{subfigure}[b]{0.24\textwidth}
      \centering
      \includegraphics[width=\textwidth]{./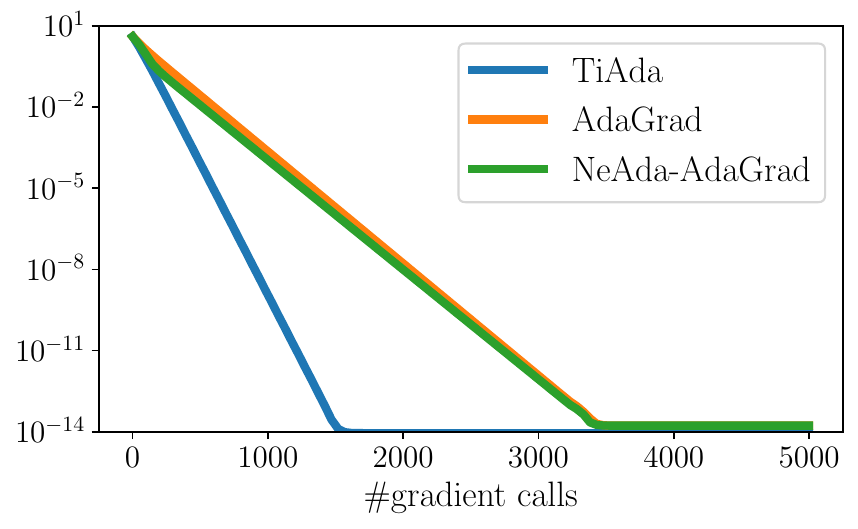}
      \caption{$r = 1/8 $}
    \end{subfigure}
    \begin{subfigure}[b]{0.25\textwidth}
      \centering
      \includegraphics[width=\textwidth]{./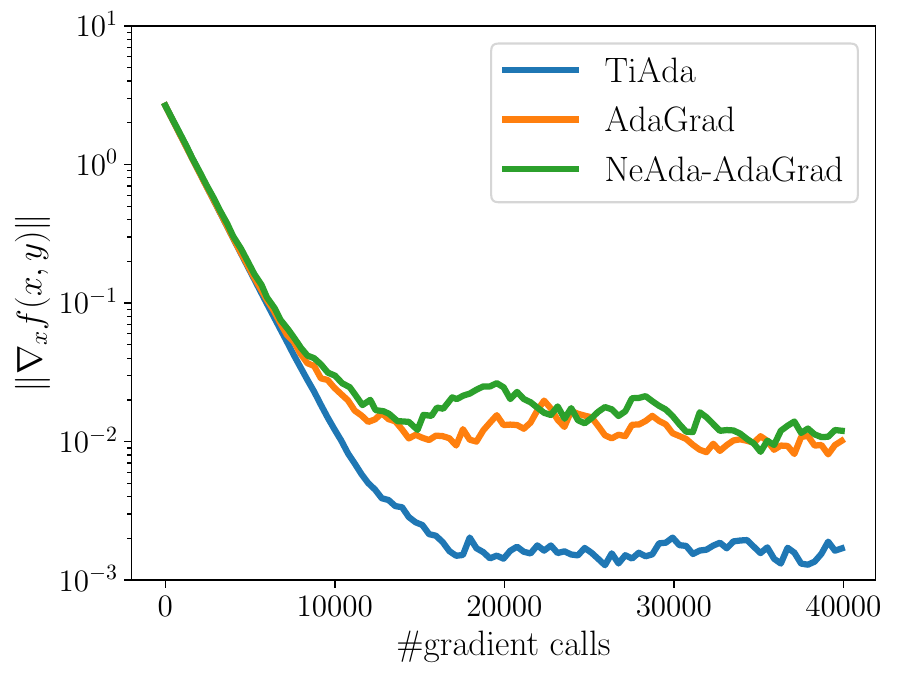}
      \caption{$r = 1/0.01$}
    \end{subfigure}
    \begin{subfigure}[b]{0.24\textwidth}
      \centering
      \includegraphics[width=\textwidth]{./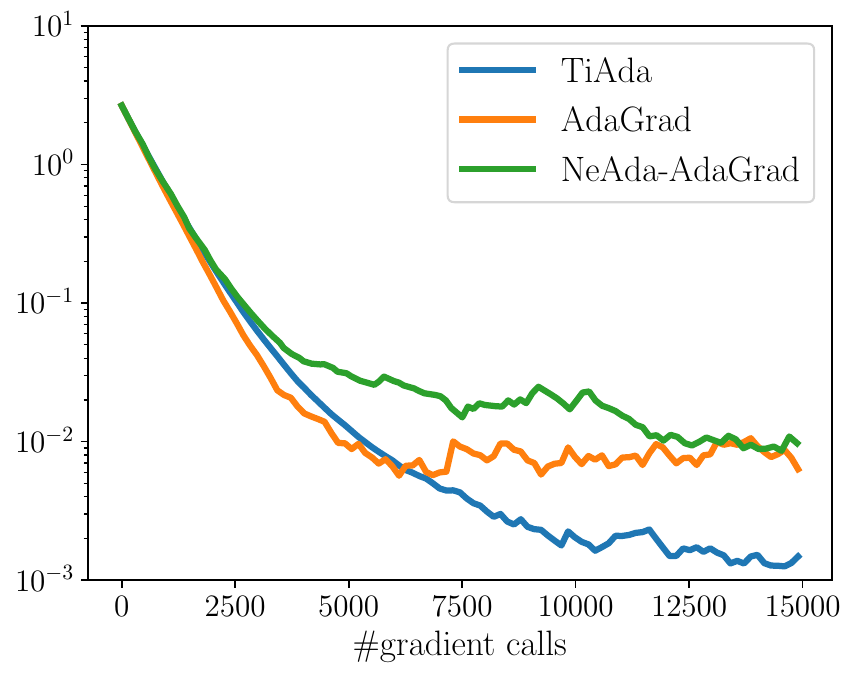}
      \caption{$r = 1/0.03$}
    \end{subfigure}
    \begin{subfigure}[b]{0.24\textwidth}
      \centering
      \includegraphics[width=\textwidth]{./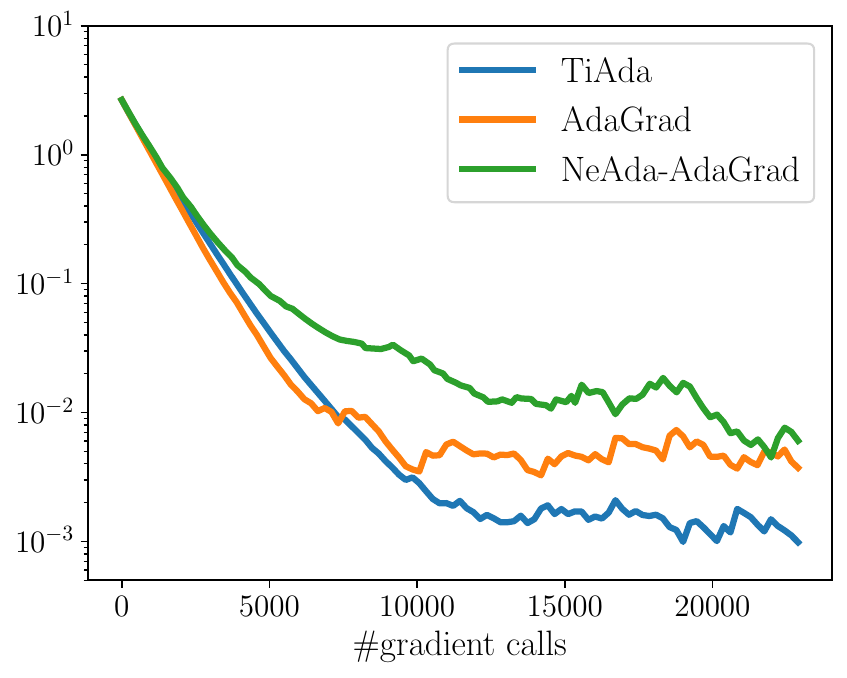}
      \caption{$r = 1/0.05 $}
    \end{subfigure}
    \begin{subfigure}[b]{0.24\textwidth}
      \centering
      \includegraphics[width=\textwidth]{./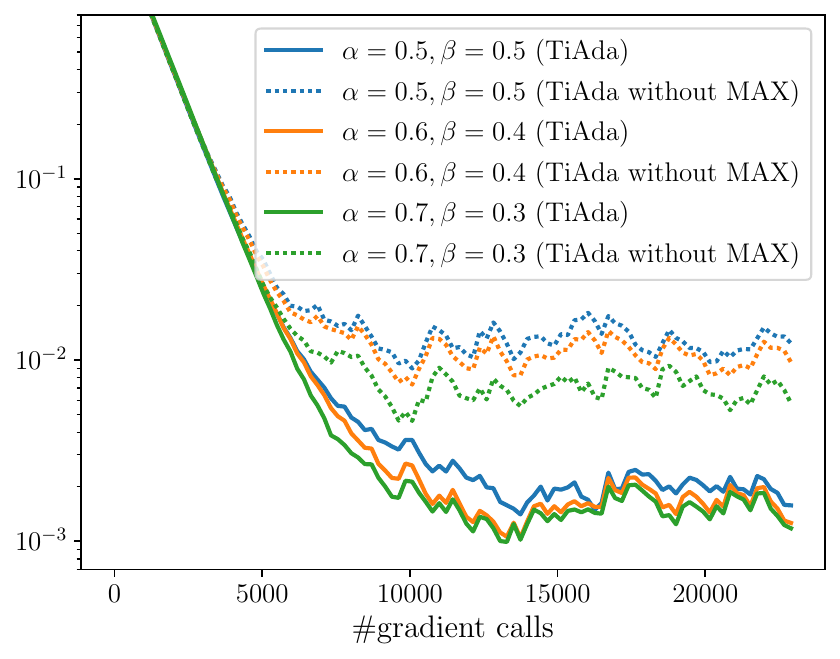}
      \caption{ablation study}
      \label{subfig:ablation_max}
    \end{subfigure}
    \caption{Comparison of algorithms on test functions. $r=\eta^x/\eta^y$ is
      the initial stepsize ratio. In the first row, we use the quadratic function~\eqref{eq:quad} with $L=2$ under deterministic gradient oracles. 
      For the second row, we test the methods on the McCormick function
      with noisy gradients.}
    \label{fig:quad_compare_other}
\end{figure}

Firstly, we examine TiAda on the quadratic function~\eqref{eq:quad} that shows the non-convergence of simple combinations of
GDA and adaptive stepsizes~\citep{yang2022nest}. Since our TiAda is based on AdaGrad, we compare it
to GDA with AdaGrad stepsize and NeAda-AdaGrad~\citep{yang2022nest}. The results are
shown in the first row of \Cref{fig:quad_compare_other}.
When the initial ratio is poor, TiAda and NeAda-AdaGrad
always converge while AdaGrad diverges. NeAda also suffers from slow convergence when the initial
ratio is poor, e.g., $1$ and $1/2$ after 2000 iterations.
In contrast, TiAda automatically balances the stepsizes and
converges fast under all ratios. 

For the stochastic case, we follow \citet{yang2022nest} and conduct
experiments on the McCormick function which is more complicated and 2-dimensional:
$
f(x, y) = \sin(x_1 + x_2) + (x_1 - x_2)^2 - \frac{3}{2}x_1 + \frac{5}{2}x_2 + 1 + x_1 y_1 + x_2 y_2 - \frac{1}{2}(y_1^2 + y_2^2).
$
TiAda consistently outperforms AdaGrad and NeAda-AdaGrad as demonstrated in
the second row of \Cref{fig:quad_compare_other} regardless of the initial ratio.
In this function, we also run an ablation study on the effect of
our design that uses max-operator in the update of $x$. We compare
TiAda with and its
variant without the max-operator, TiAda without MAX (\Cref{alg:tiada_wo_max} in the appendix) whose effective stepsizes of $x$ are
$\eta^x / \left(v^x_{t+1}\right)^{\alpha}$.
According to \Cref{subfig:ablation_max}, TiAda converges to
smaller gradient norms under all configurations of $\alpha$ and $\beta$.

\subsection{Distributional robustness optimization}

\begin{figure}[t]
    \centering
    \begin{subfigure}[b]{0.25\textwidth}
      \centering
      \includegraphics[width=\textwidth]{./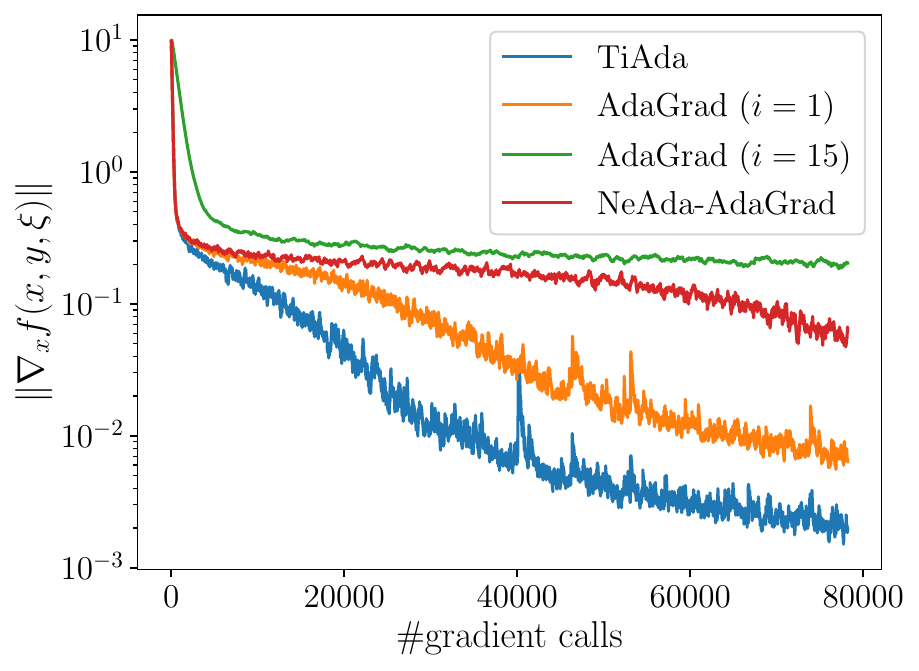}
      \caption{\scriptsize $\eta^x=0.1, \eta^y=0.05$}
    \end{subfigure}
    \begin{subfigure}[b]{0.237\textwidth}
      \centering
      \includegraphics[width=\textwidth]{./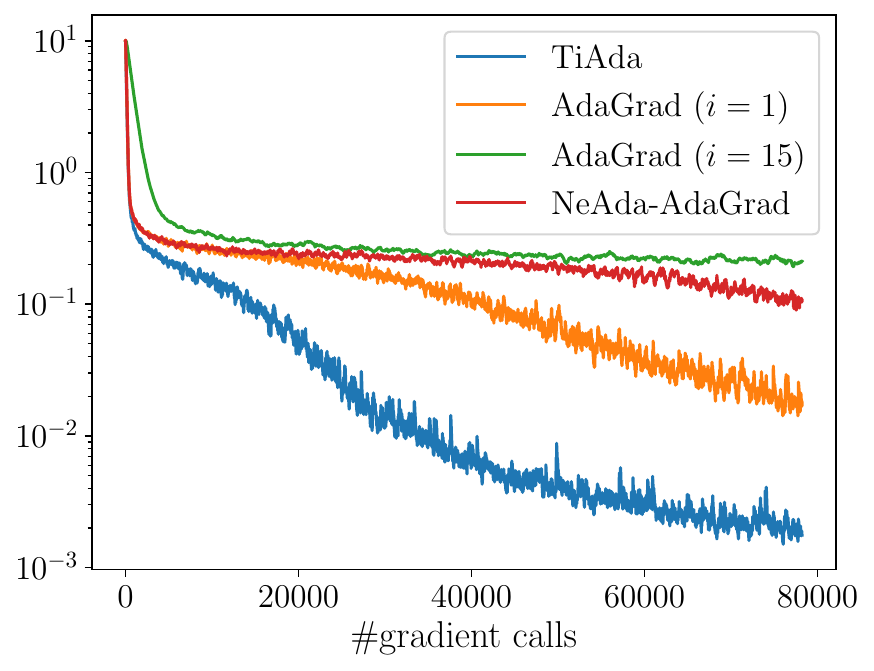}
      \caption{\scriptsize $\eta^x=0.1, \eta^y=0.1$}
    \end{subfigure}
    \begin{subfigure}[b]{0.237\textwidth}
      \centering
      \includegraphics[width=\textwidth]{./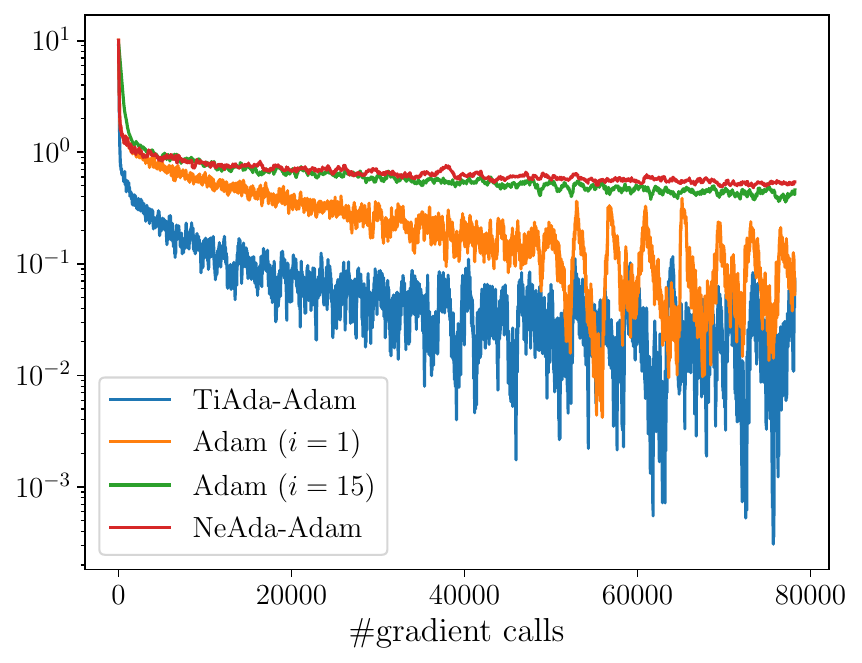}
      \caption{\scriptsize $\eta^x=0.001, \eta^y=0.1$}
      \label{subfig:adv_robust_adam_1}
    \end{subfigure}
    \begin{subfigure}[b]{0.237\textwidth}
      \centering
      \includegraphics[width=\textwidth]{./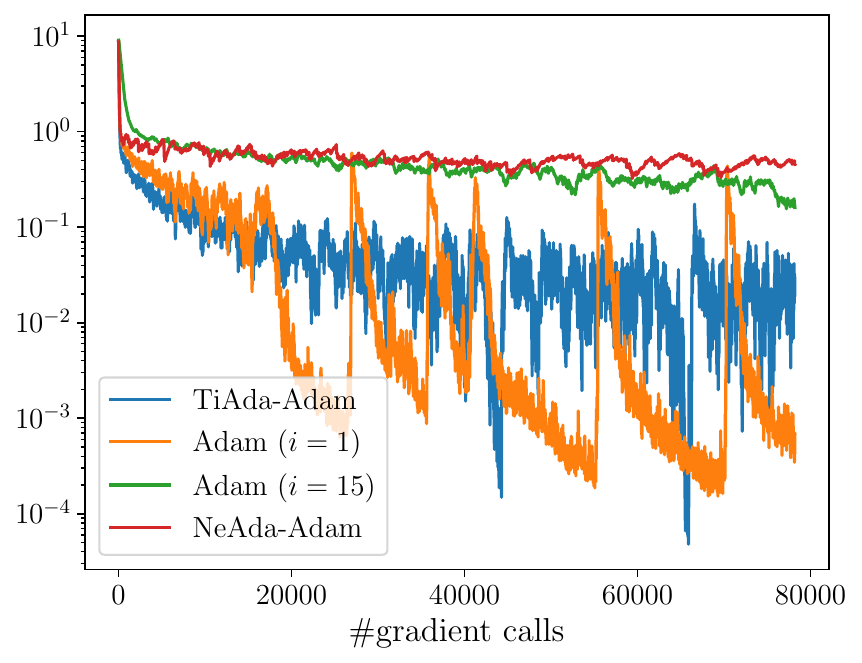}
      \caption{\scriptsize $\eta^x=0.001, \eta^y=0.001$}
      \label{subfig:adv_robust_adam_2}
    \end{subfigure}
    \caption{Comparison of the algorithms on distributional robustness optimization~\eqref{eq:dist_robust}.
    We use $i$ in the legend to indicate the number of inner loops. Here we present
    two sets of stepsize configurations for the comparisons
    of AdaGrad-like and Adam-like algorithms. Please refer to
    \Cref{sec:add_exp} for  extensive experiments on larger ranges of
    stepsizes, and it will be shown that TiAda is the best among all stepsize combinations in our grid.}
    \label{fig:adv_robust}
\end{figure}

In this subsection, we consider the %
distributional robustness optimization~\citep{sinha2018certifiable}.
We target training the model weights, the primal variable $x$, to be robust
to the perturbations in the image inputs, the dual variable $y$. The 
problem
can be formulated as:
\[
    \min_x \max_{y = [y_1,\dots,y_n]} \frac{1}{n} \sum_{i=1}^n f_i(x, y_i) - \gamma \norm*{y_i - v_i}^2,
    \numberthis \label{eq:dist_robust}
\]
where $f_i$ is the loss function of the $i$-th sample,
$v_i$ is the $i$-th input image, and $y_i$ is the corresponding perturbation.
There are a total of $n$ samples and $\gamma$ is a trade-off hyper-parameter between the original loss and the penalty of the perturbations.
If $\gamma$ is large enough, the problem is NC-SC.

We conduct the experiments on the MNIST dataset~\citep{lecun1998mnist}.
In the left two plots of \Cref{fig:adv_robust}, we compare TiAda with
AdaGrad and NeAda-AdaGrad in terms of convergence.
Since it is common in practice to update $y$ 15 times after each $x$
update~\citep{sinha2018certifiable} for better generalization error,
we implement AdaGrad using both single and 15 iterations of inner loop~(update of $y$).
In order to show that TiAda is more robust to the initial stepsize ratio, we compare
two sets of initial stepsize configurations with two different ratios.
In both cases, TiAda outperforms NeAda and AdaGrad, especially when $\eta^x = \eta^y=0.1$,
the performance gap is large. In the right two plots of \Cref{fig:adv_robust},
the Adam variants are compared. In this case, we find that TiAda 
is not only faster, but also more stable comparing to Adam with one inner loop iteration.

\subsection{Generative Adversarial Networks}

\begin{figure}[t]
  \begin{center}
    \includegraphics[width=0.6\textwidth]{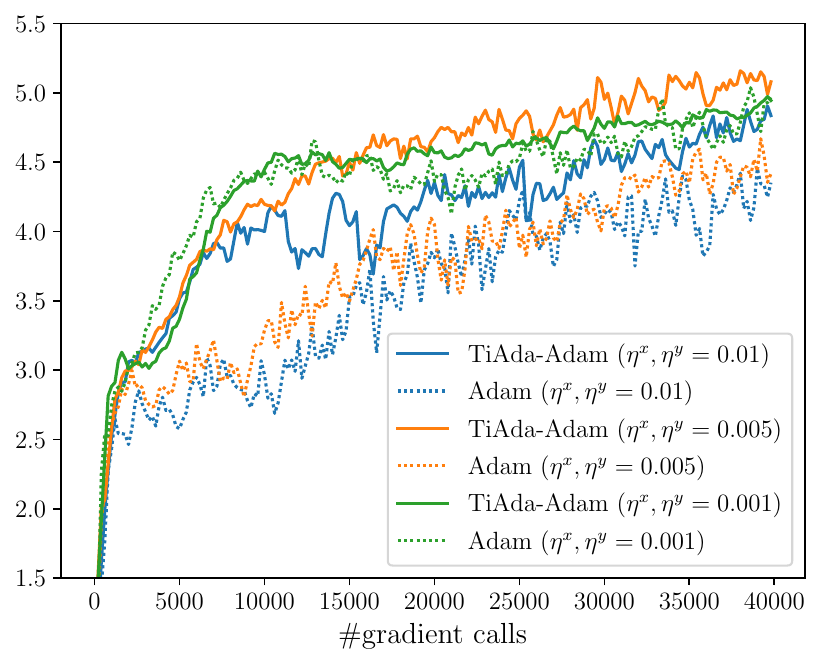}
  \caption{Inception score  on WGAN-GP.}
  \label{fig:gan}
  \end{center}
\end{figure}

Another successful and popular application of minimax optimization is
generative adversarial networks. In this task, a discriminator (or critic) is trained
to distinguish whether an image is from the dataset.
At the same time, a generator is mutually trained to synthesize samples with the
same distribution as the training dataset so as to fool the discriminator.
We use WGAN-GP loss~\citep{gulrajani2017improved}, which imposes the
discriminator to be a $1$-Lipschitz function,
with CIFAR-10 dataset~\citep{krizhevsky2009learning}
in our experiments.

Since TiAda is a single-loop algorithm, for fair comparisons, we also update
the discriminator only once for each generator update in  Adam.
In \Cref{fig:gan}, we plot the inception scores~\citep{salimans2016improved}
of TiAda-Adam and Adam under different initial stepsizes.
We use the same color for the same initial stepsizes, and different line styles
to distinguish the two methods, i.e., solid lines for TiAda-Adam and
dashed lines for Adam. For all the three initial stepsizes we consider,
TiAda-Adam achieves higher inception scores.
Also, TiAda-Adam is more robust to initial stepsize selection, as the gap between
different solid lines at the end of training is smaller than the dashed lines.

\section{Conclusion}

In this work, we bring in adaptive stepsizes to nonconvex minimax problems
in a parameter-agnostic manner. 
We designed the first time-scale adaptive algorithm, TiAda,
which  progressively adjusts the effective stepsize ratio and
reaches the desired time-scale separation. TiAda is also noise adaptive
and does not require large batchsizes compared with the existing parameter-agnostic algorithm for nonconvex minimax optimization.
Furthermore,  TiAda is able to achieve
optimal and near-optimal complexities respectively wtih deterministic and stochastic
gradient oracles.
We also empirically showcased the advantages of TiAda over NeAda and GDA with
adaptive stepsizes on several tasks, including simple test
functions, as well as NC-SC and NC-NC
real-world applications. It remains an interesting problem to study whether TiAda can escape stationary points that are not local optimum, like adaptive methods for minimization problems \citep{staib2019escaping}.

\bibliography{iclr2023_conference}
\bibliographystyle{plainnat}

\clearpage

\appendix

\section{Supplementary to Experiments}

\begin{table}[t]
  \caption{Stepsize schemes fit in generalized TiAda. See also \citet{yang2022nest}.}
  \label{tab:adaptive_schemes}
  \begin{center}
  \begin{tabular}{lll} \toprule
  Algorithms & first moment parameter $\beta_t$ &  second moment function $\psi\left(v_0, \{u^2_i\}_{i=0}^{t}\right)$   \\ \midrule
  AdaGrad~(TiAda) & $\beta_t = 0$     & $v_0 + \sum_{i=0}^{t} u_i^2$                                                    \\
  GDA & $\beta_t = 0$     &  $1$                                                    \\
  Adam    & $0 < \beta_t < 1$ & $\gamma^{t+1}v_0 + (1-\gamma)\sum_{i=0}^t\gamma^{t-i}u_i^2$                     \\
  AMSGrad & $0 < \beta_t < 1$ & $\max_{m=0,\dots, t} \gamma^{m+1}v_0 + (1-\gamma)\sum_{i=0}^m\gamma^{m-i}u_i^2$
  \\ \bottomrule
  \end{tabular}
  \end{center}
\end{table}

\subsection{Experimental Details}
\label{sec:exp_detail}

In this section, we will summarize the experimental settings and hyper-parameters
used. As we mentioned, since we try to develop a parameter-agnostic algorithm
without tuning the hyper-parameters much, if not specified, we simply use
$\alpha=0.6$ and $\beta=0.4$ for all experiments. For fair comparisons,
we used the same hyper-parameters when comparing our TiAda with other algorithms.

\paragraph{Test Functions}
For \Cref{fig:compare_ratio} and the first row of \Cref{fig:quad_compare_other},
we conduct experiments
on problem~\eqref{eq:quad} with $L=2$. We use initial stepsize $\eta^y=0.2$
and initial point $(1, 0.01)$ for all runs.
As for the McCormick function
used in the second row of
\Cref{fig:quad_compare_other}, we chose $\eta^y=0.01$, and the
noises added to the gradients are from zero-mean Gaussian distribution
with variance $0.01$.

\paragraph{Distributional Robustness Optimization}
For results shown in \Cref{fig:adv_robust,fig:adv_robust_all_adagrad,fig:adv_robust_all_adam}, we adapt code from \citet{dist_robust_code},
and used the same hyper-parameter setting as \citet{sinha2018certifiable,sebbouh2022randomized},
i.e., $\gamma=1.3$.
The model we used is a three layer convolutional neural network~(CNN) with
a final fully-connected layer. For each layer, batch normalization and ELU activation
are used. The width of each layer is $(32, 64, 128, 512)$. The setting is the
same as \citet{sinha2018certifiable,yang2022nest}. We set the batchsize as $128$,
and for the Adam-like optimizers,
including Adam, NeAda-Adam and TiAda-Adam, we use $\beta_1=0.9, \beta_2=0.999$
for the first moment and second moment parameters.

\paragraph{Generative Adversarial Networks}
For this part, we use the code adapted from \citet{gan_code}.
To produce the results in \Cref{fig:gan}, a four layer CNN and a four layer
CNN with transpose convolution layers are used respectively for the discriminator
and generator. Following a similar setting as \citet{daskalakis2018training},
we set batchsize as $512$, the dimension of latent variable as $50$ and
the weight of gradient penalty term as $10^{-4}$.
For the Adam-like optimizers, we set $\beta_1=0.5, \beta_2=0.9$.
To get the inception score, we feed the pre-trained inception network
with 8000 synthesized samples.

\subsection{Ablation Study on Convergence Behavior with Different $\alpha$ and $\beta$}
\label{subsec:ablation}

We conduct experiments on the quadratic minimax problem (\ref{eq:quad}) with $L = 2$ to study the effect of hyper-parameters $\alpha$ and $\beta$ on the convergence behavior of TiAda. As discussed in \Cref{sec:intro,subsec:stochastic}, we refer to the period before the stepsize ratio reduce to the convergence threshold as Stage I, and the period after that as Stage II. In order to accentuate the difference between these two stages, we pick a large initial stepsize ratio $\eta^x/\eta^y = 20$. We compare 4 different pairs of $\alpha$ and $\beta$: $\alpha \in \{0.59, 0.6,0.61, 0.62\}$ and $\beta = 1-\alpha$. From \Cref{fig:compare_para}, we observed that as soon as TiAda enters Stage II, the norm of gradients start to drop. Moreover, the closer $\alpha$ and $\beta$ are to 0.5, the more time TiAda remains in Stage I, which confirms the intuitions behind our analysis in \Cref{subsec:stochastic}.

\begin{figure}[ht]
    \centering
    \begin{subfigure}[b]{0.45\textwidth}
      \centering
      \includegraphics[width=\textwidth]{./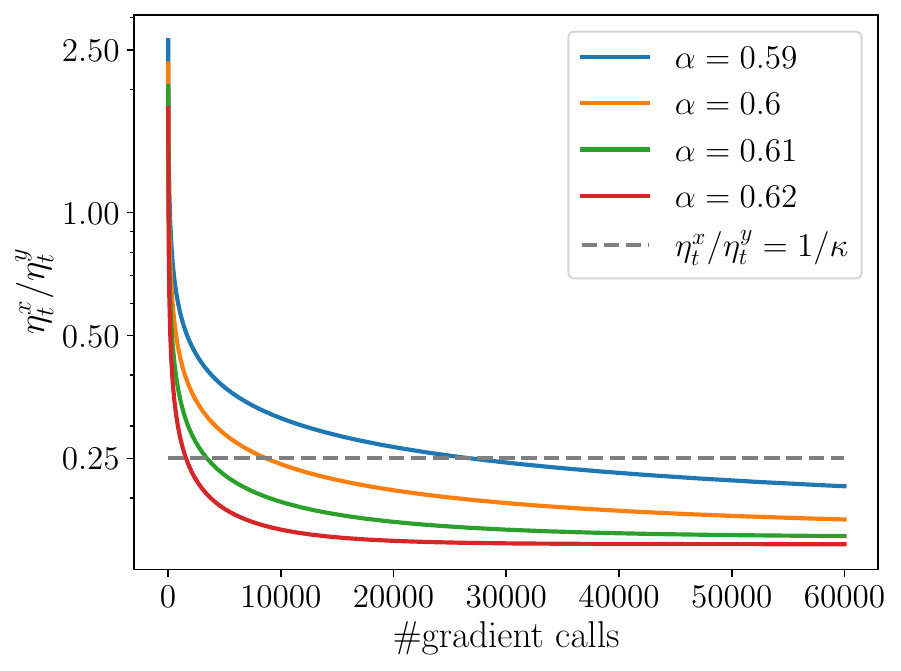}
      \caption{effective stepsize ratio}
    \end{subfigure}
    \begin{subfigure}[b]{0.45\textwidth}
      \centering
      \includegraphics[width=\textwidth]{./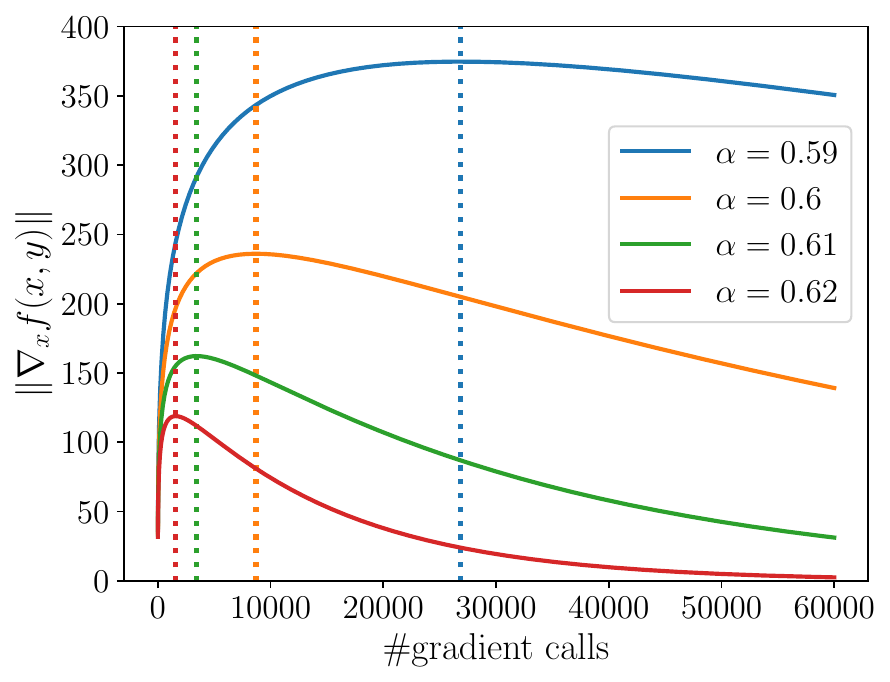}
      \caption{convergence}
    \end{subfigure}
    \caption{Illustration of the effect of $\alpha$ and $\beta$ on the two stages in TiAda's time-scale adaptation process. We set $\beta=1 - \alpha$. The dashed line on the right plot represents
    the first iteration when the effective stepsize ratio is below $1/\rk$.}
    \label{fig:compare_para}
\end{figure}

\subsection{Additional Experiments on Distributional Robustness Optimization}
\label{sec:add_exp}

\begin{figure}[!ht]
    \centering
    \begin{subfigure}[b]{0.24\textwidth}
      \centering
      \includegraphics[width=\textwidth]{./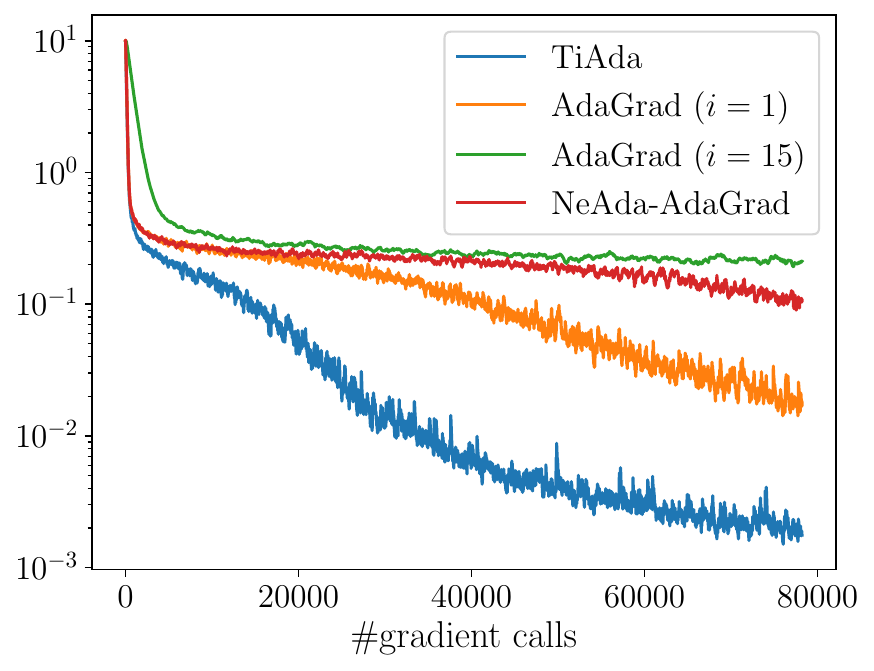}
      \caption{\scriptsize $\eta^x=0.1, \eta^y=0.1$}
    \end{subfigure}
    \begin{subfigure}[b]{0.24\textwidth}
      \centering
      \includegraphics[width=\textwidth]{./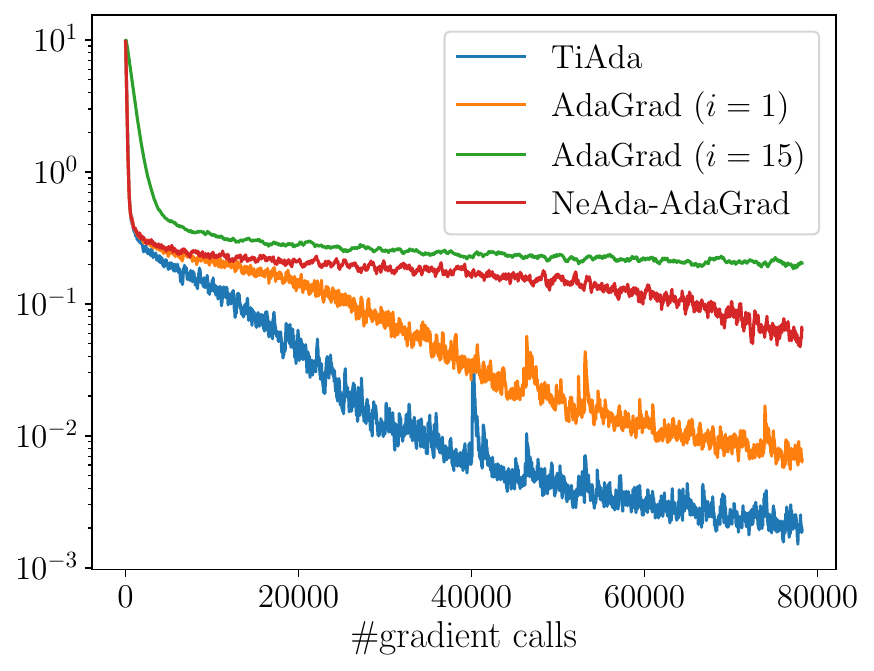}
      \caption{\scriptsize $\eta^x=0.1, \eta^y=0.05$}
    \end{subfigure}
    \begin{subfigure}[b]{0.24\textwidth}
      \centering
      \includegraphics[width=\textwidth]{./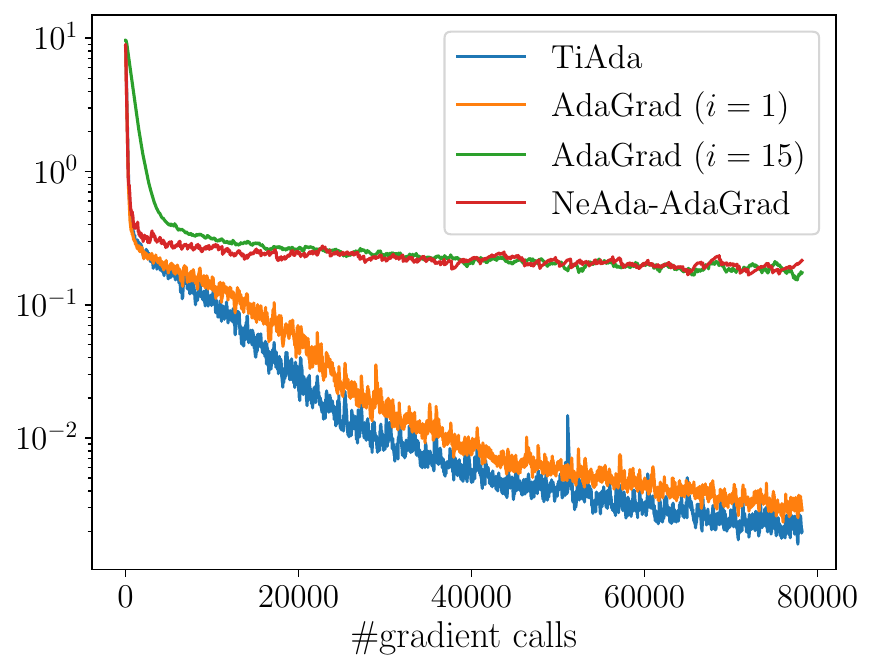}
      \caption{\scriptsize $\eta^x=0.1, \eta^y=0.01$}
    \end{subfigure}
    \begin{subfigure}[b]{0.24\textwidth}
      \centering
      \includegraphics[width=\textwidth]{./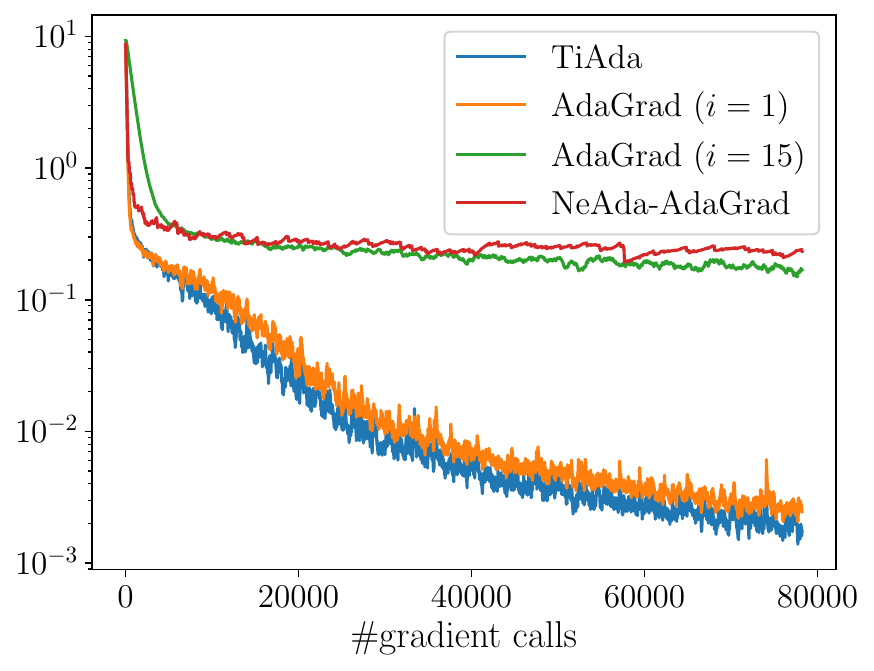}
      \caption{\scriptsize $\eta^x=0.1, \eta^y=0.005$}
    \end{subfigure}
    \begin{subfigure}[b]{0.24\textwidth}
      \centering
      \includegraphics[width=\textwidth]{./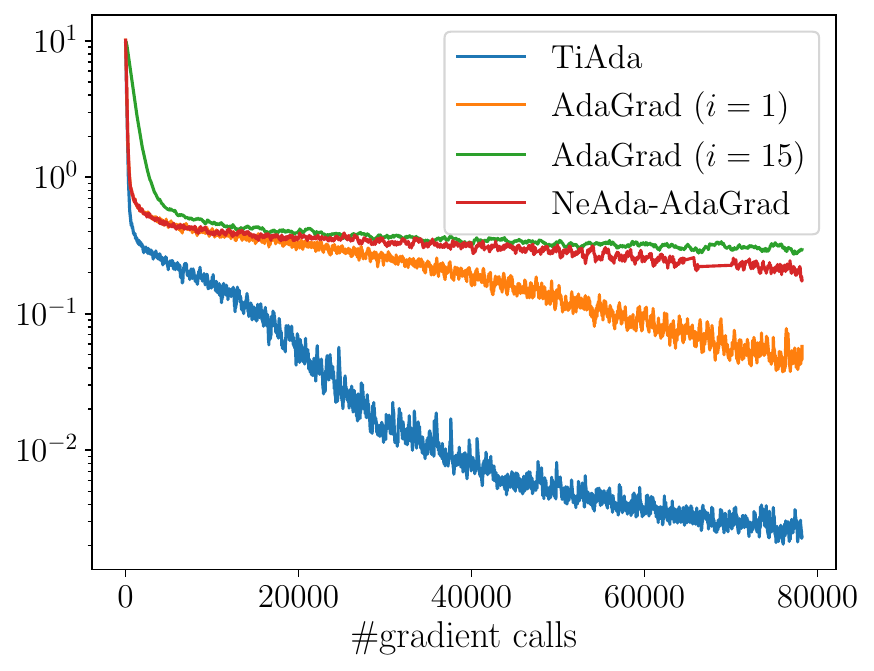}
      \caption{\scriptsize $\eta^x=0.05, \eta^y=0.1$}
    \end{subfigure}
    \begin{subfigure}[b]{0.24\textwidth}
      \centering
      \includegraphics[width=\textwidth]{./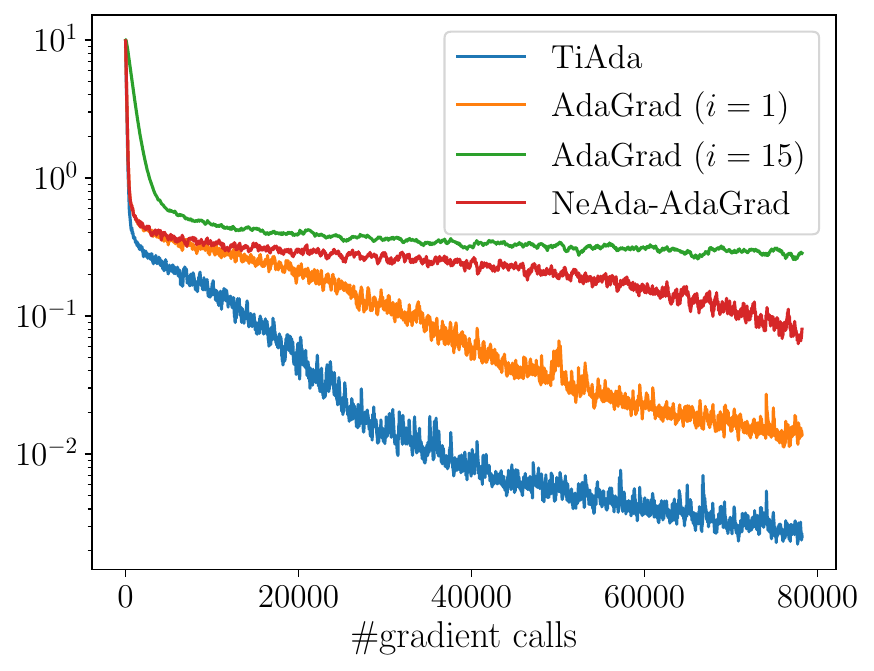}
      \caption{\scriptsize $\eta^x=0.05, \eta^y=0.05$}
    \end{subfigure}
    \begin{subfigure}[b]{0.24\textwidth}
      \centering
      \includegraphics[width=\textwidth]{./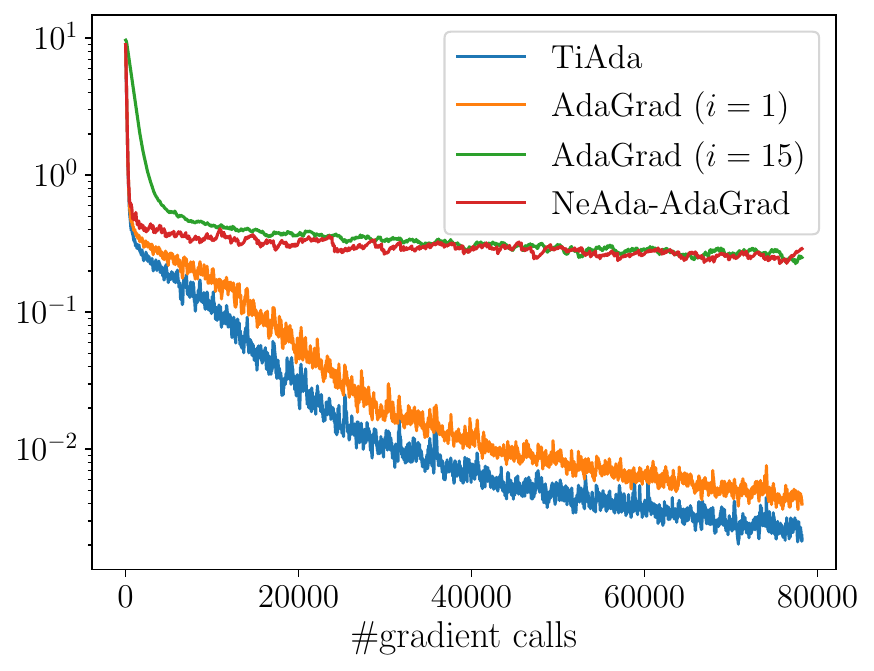}
      \caption{\scriptsize $\eta^x=0.05, \eta^y=0.01$}
    \end{subfigure}
    \begin{subfigure}[b]{0.24\textwidth}
      \centering
      \includegraphics[width=\textwidth]{./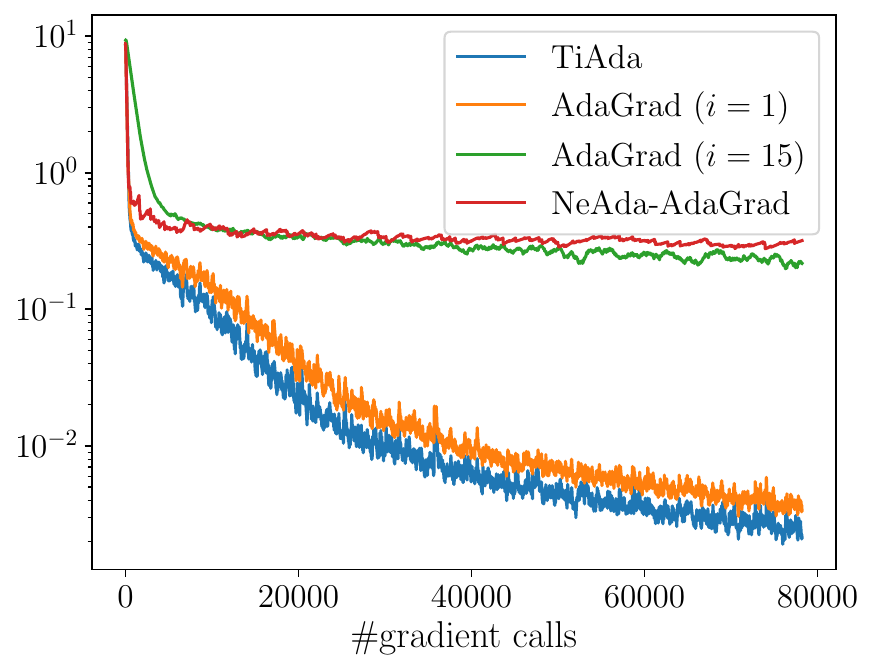}
      \caption{\scriptsize $\eta^x=0.05, \eta^y=0.005$}
    \end{subfigure}
    \begin{subfigure}[b]{0.24\textwidth}
      \centering
      \includegraphics[width=\textwidth]{./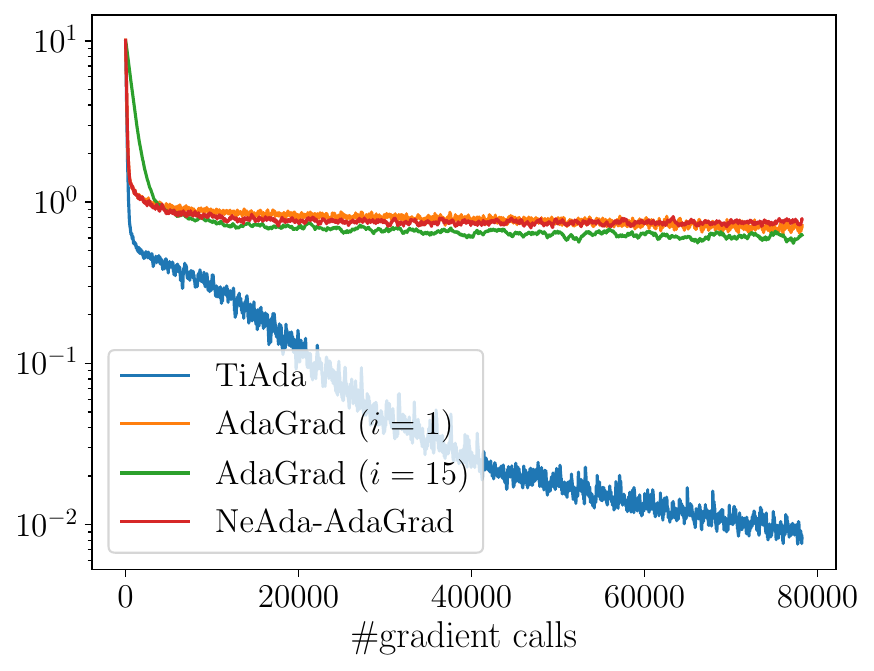}
      \caption{\scriptsize $\eta^x=0.01, \eta^y=0.1$}
    \end{subfigure}
    \begin{subfigure}[b]{0.24\textwidth}
      \centering
      \includegraphics[width=\textwidth]{./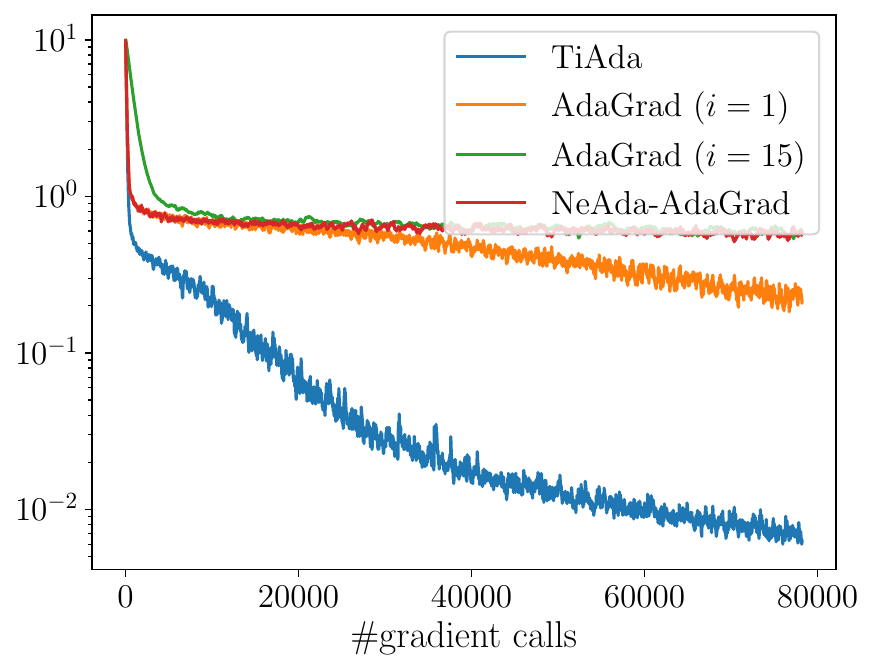}
      \caption{\scriptsize $\eta^x=0.01, \eta^y=0.05$}
    \end{subfigure}
    \begin{subfigure}[b]{0.24\textwidth}
      \centering
      \includegraphics[width=\textwidth]{./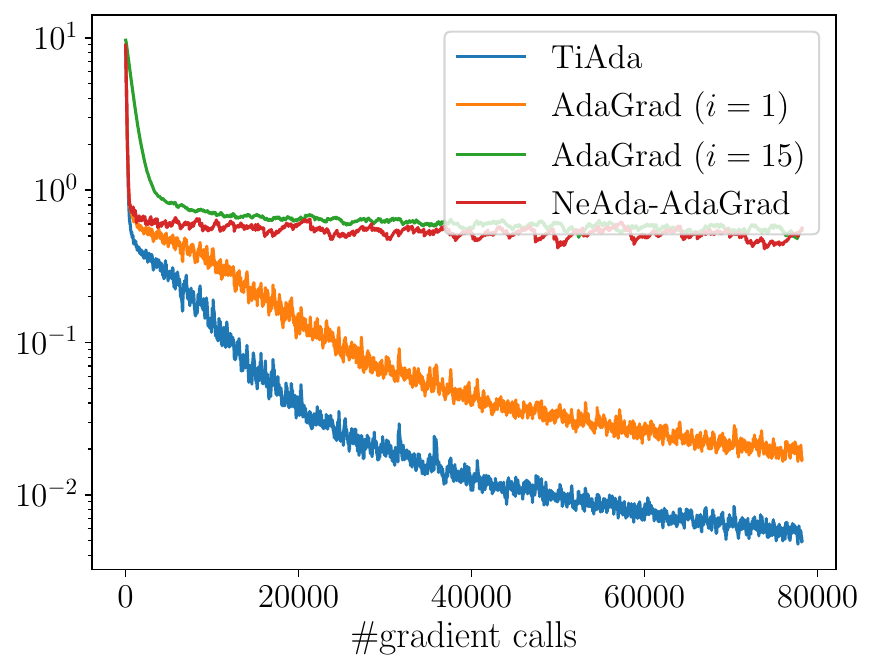}
      \caption{\scriptsize $\eta^x=0.01, \eta^y=0.01$}
    \end{subfigure}
    \begin{subfigure}[b]{0.24\textwidth}
      \centering
      \includegraphics[width=\textwidth]{./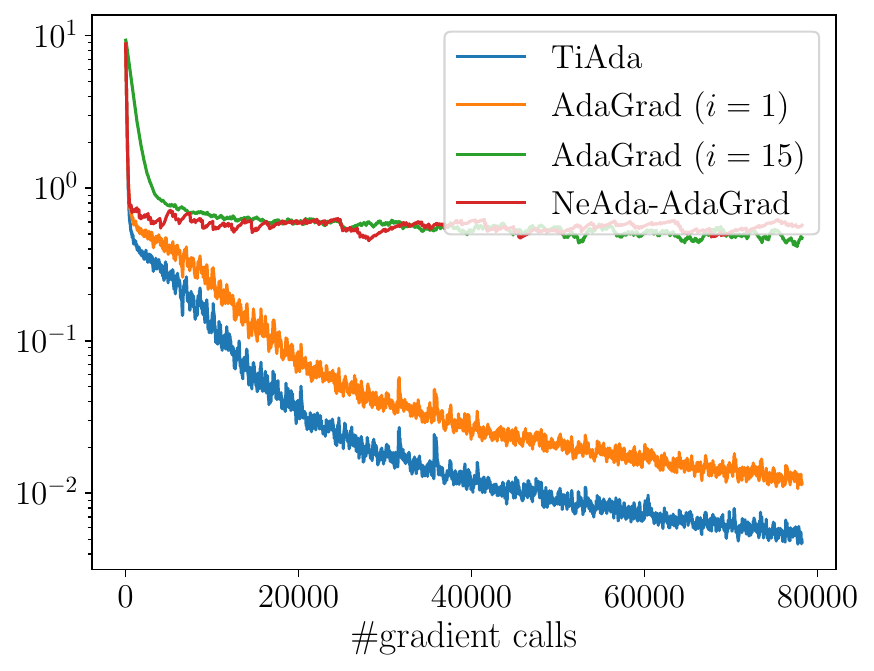}
      \caption{\scriptsize $\eta^x=0.01, \eta^y=0.005$}
    \end{subfigure}
    \begin{subfigure}[b]{0.24\textwidth}
      \centering
      \includegraphics[width=\textwidth]{./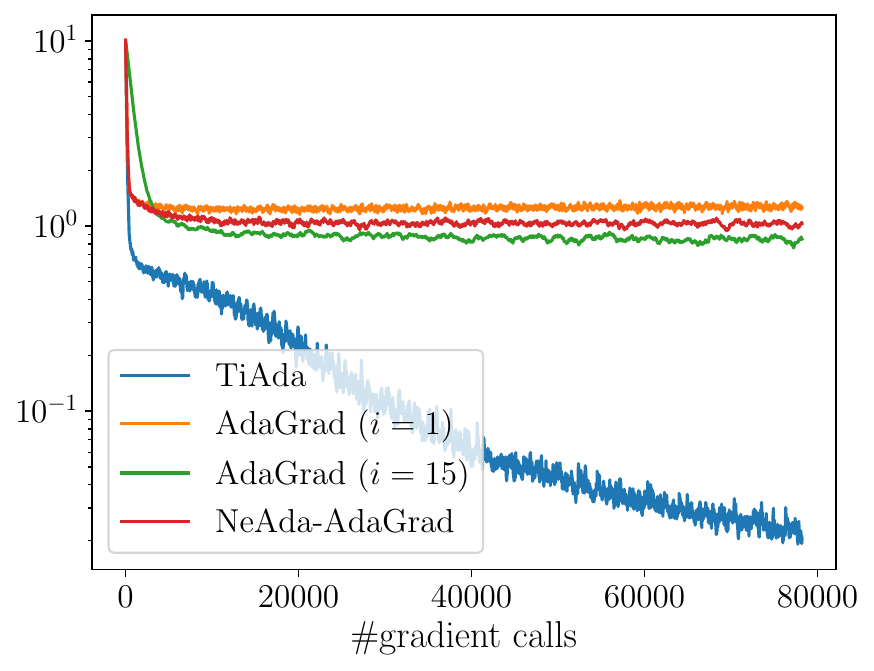}
      \caption{\scriptsize $\eta^x=0.005, \eta^y=0.1$}
    \end{subfigure}
    \begin{subfigure}[b]{0.24\textwidth}
      \centering
      \includegraphics[width=\textwidth]{./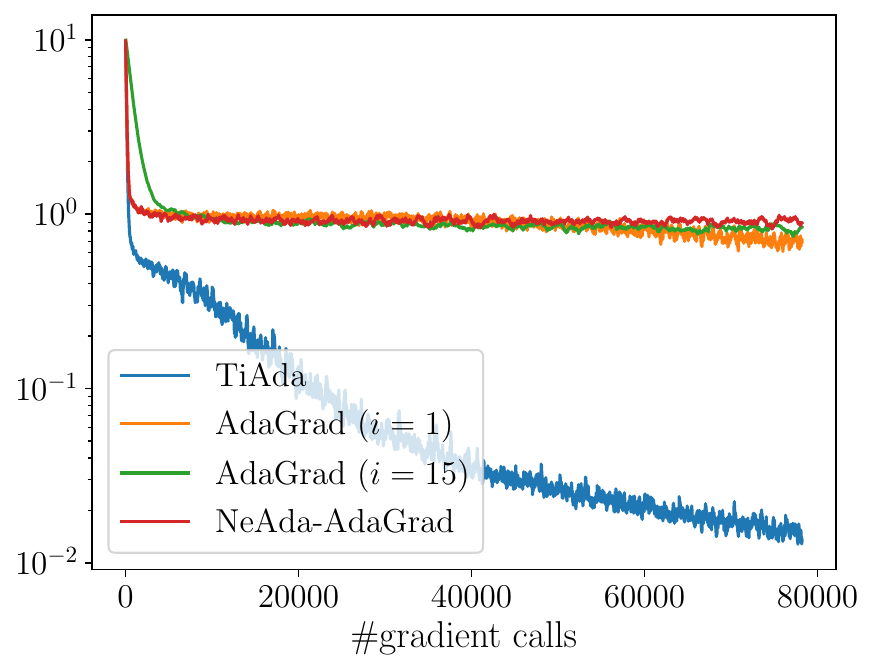}
      \caption{\scriptsize $\eta^x=0.005, \eta^y=0.05$}
    \end{subfigure}
    \begin{subfigure}[b]{0.24\textwidth}
      \centering
      \includegraphics[width=\textwidth]{./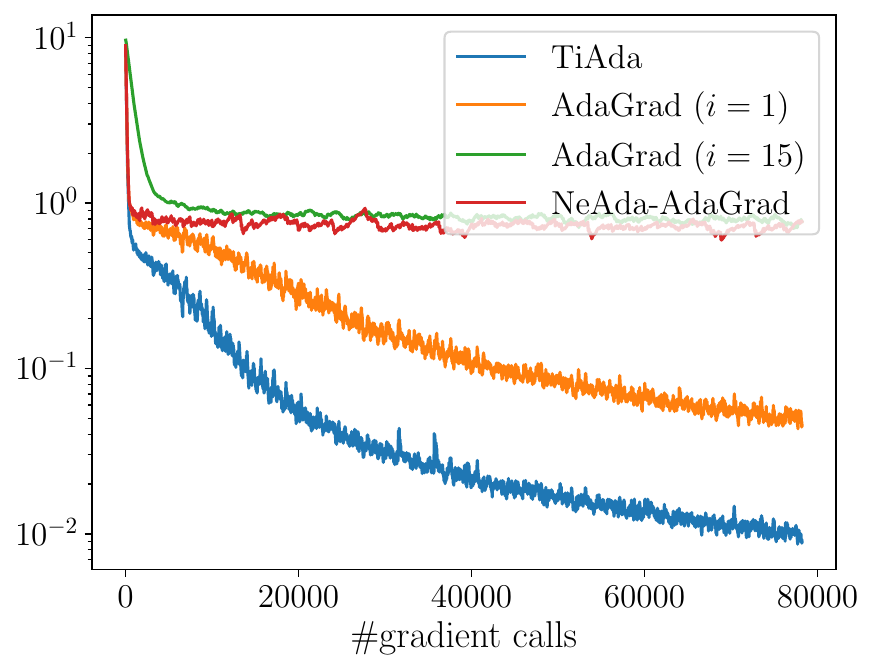}
      \caption{\scriptsize $\eta^x=0.005, \eta^y=0.01$}
    \end{subfigure}
    \begin{subfigure}[b]{0.24\textwidth}
      \centering
      \includegraphics[width=\textwidth]{./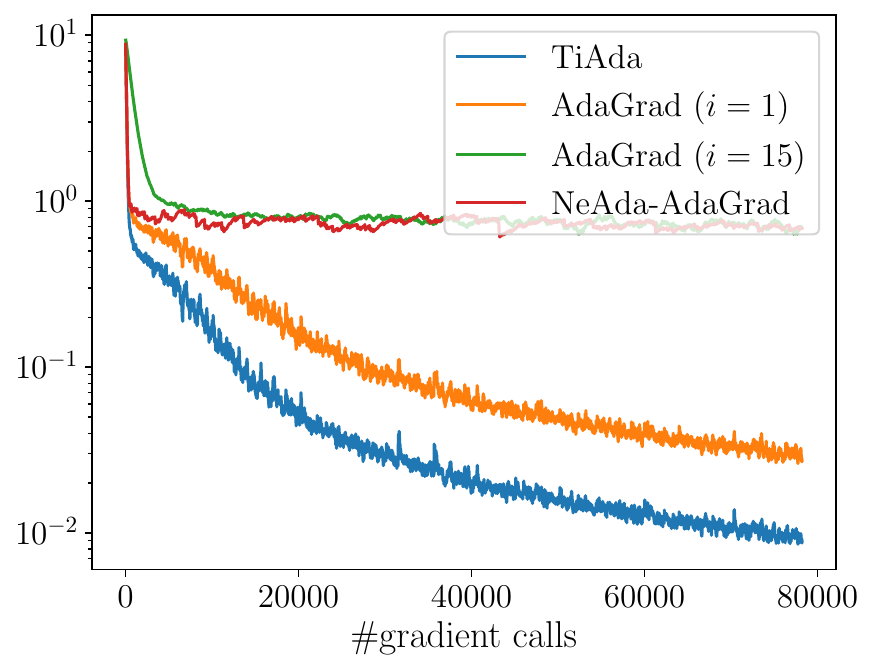}
      \caption{\scriptsize $\eta^x=0.005, \eta^y=0.005$}
    \end{subfigure}
    \caption{Gradient norms in $x$ of AdaGrad-like algorithms on distributional robustness optimization~\eqref{eq:dist_robust}. We use $i$ in the legend to indicate the number of inner loops.}
    \label{fig:adv_robust_all_adagrad}
\end{figure}

\begin{figure}[!ht]
    \centering
    \begin{subfigure}[b]{0.24\textwidth}
      \centering
      \includegraphics[width=\textwidth]{./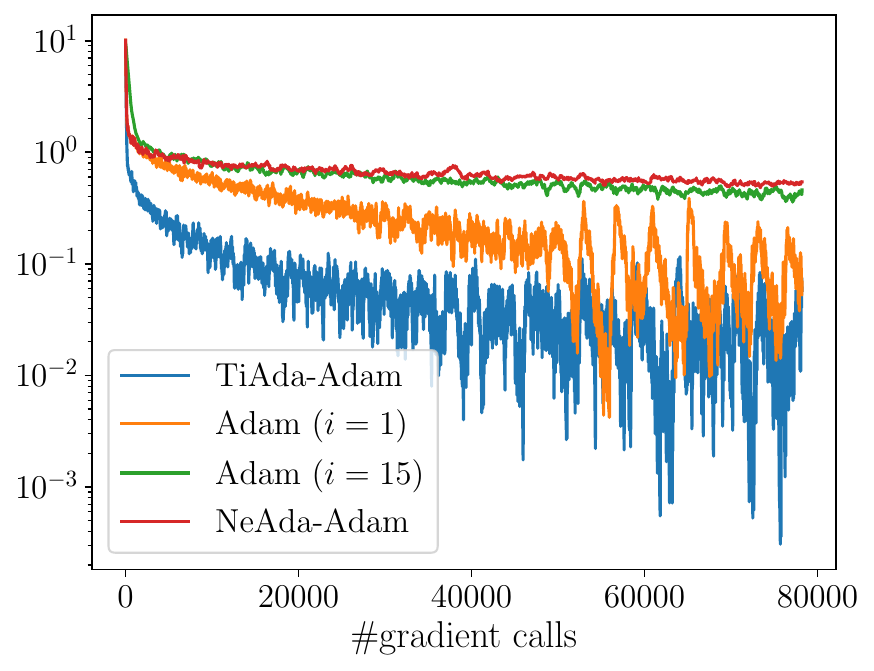}
      \caption{\tiny $\eta^x=0.001, \eta^y=0.1$}
    \end{subfigure}
    \begin{subfigure}[b]{0.24\textwidth}
      \centering
      \includegraphics[width=\textwidth]{./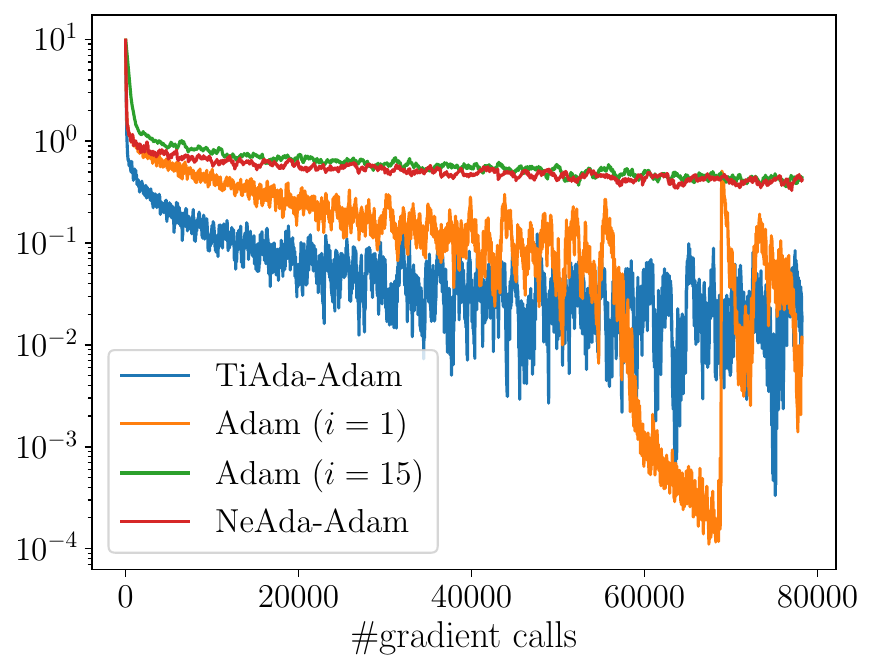}
      \caption{\tiny $\eta^x=0.001, \eta^y=0.05$}
    \end{subfigure}
    \begin{subfigure}[b]{0.24\textwidth}
      \centering
      \includegraphics[width=\textwidth]{./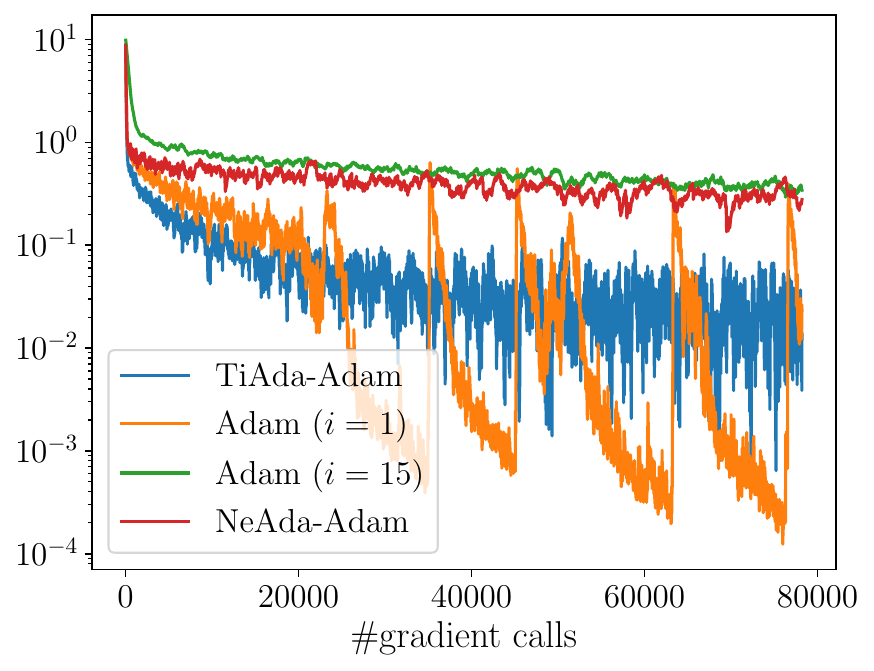}
      \caption{\tiny $\eta^x=0.001, \eta^y=0.005$}
    \end{subfigure}
    \begin{subfigure}[b]{0.24\textwidth}
      \centering
      \includegraphics[width=\textwidth]{./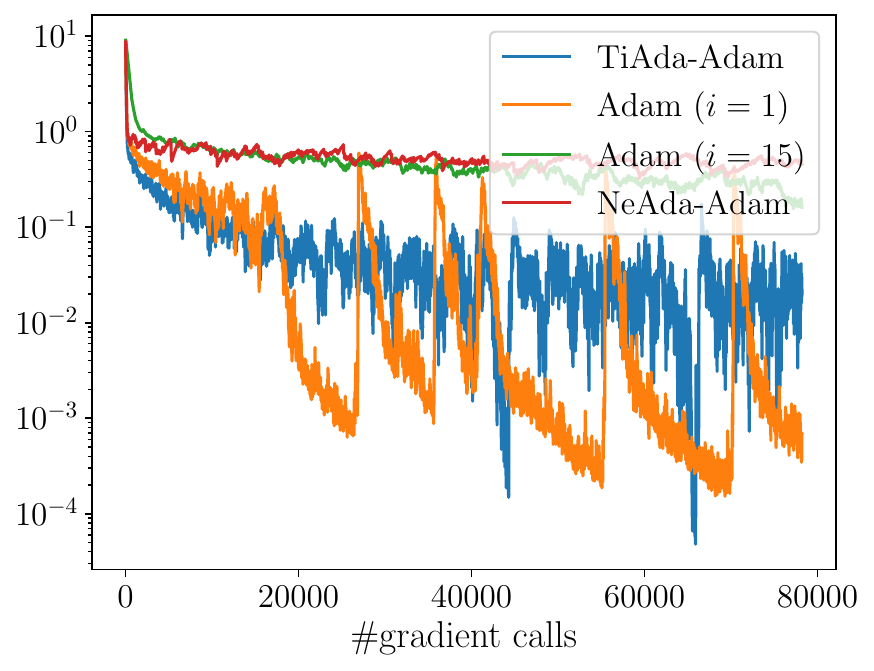}
      \caption{\tiny $\eta^x=0.001, \eta^y=0.001$}
    \end{subfigure}
    \begin{subfigure}[b]{0.24\textwidth}
      \centering
      \includegraphics[width=\textwidth]{./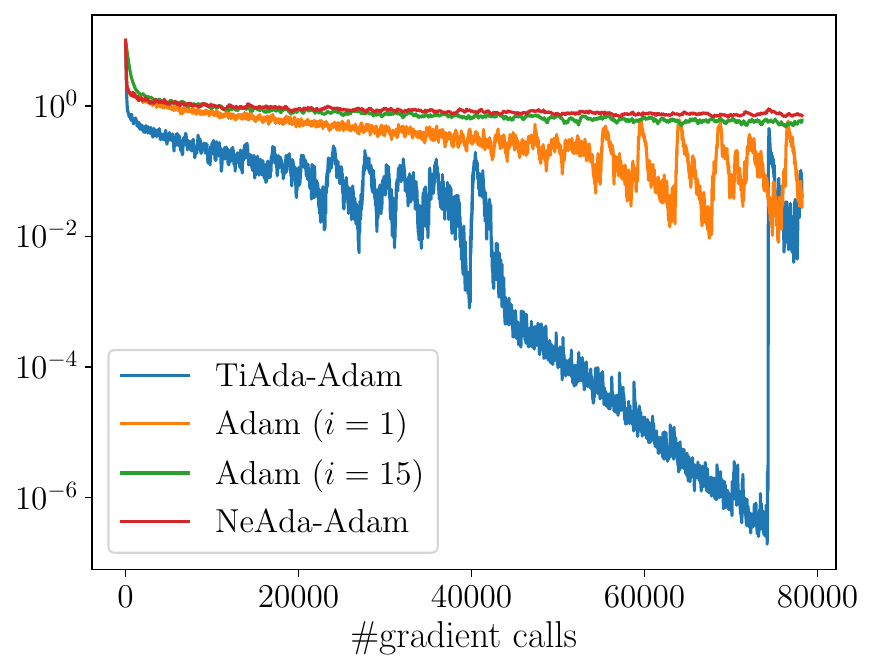}
      \caption{\tiny $\eta^x=0.0005, \eta^y=0.1$}
    \end{subfigure}
    \begin{subfigure}[b]{0.24\textwidth}
      \centering
      \includegraphics[width=\textwidth]{./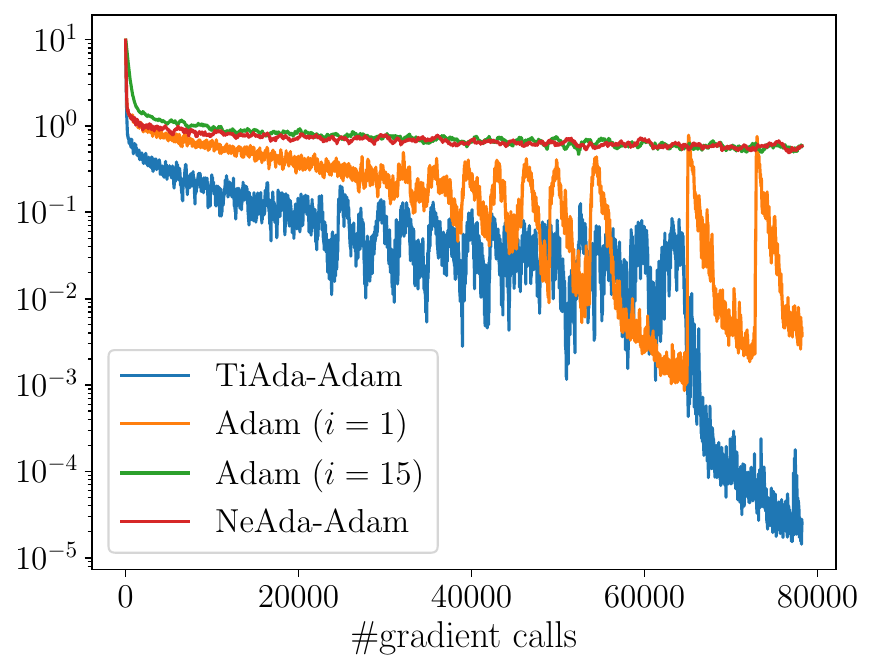}
      \caption{\tiny $\eta^x=0.0005, \eta^y=0.05$}
    \end{subfigure}
    \begin{subfigure}[b]{0.24\textwidth}
      \centering
      \includegraphics[width=\textwidth]{./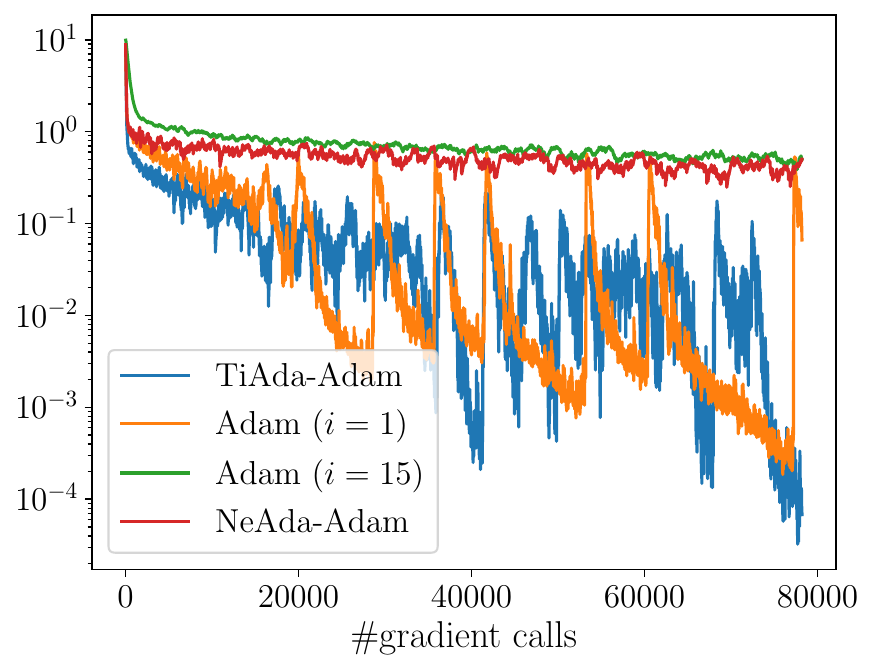}
      \caption{\tiny $\eta^x=0.0005, \eta^y=0.005$}
    \end{subfigure}
    \begin{subfigure}[b]{0.24\textwidth}
      \centering
      \includegraphics[width=\textwidth]{./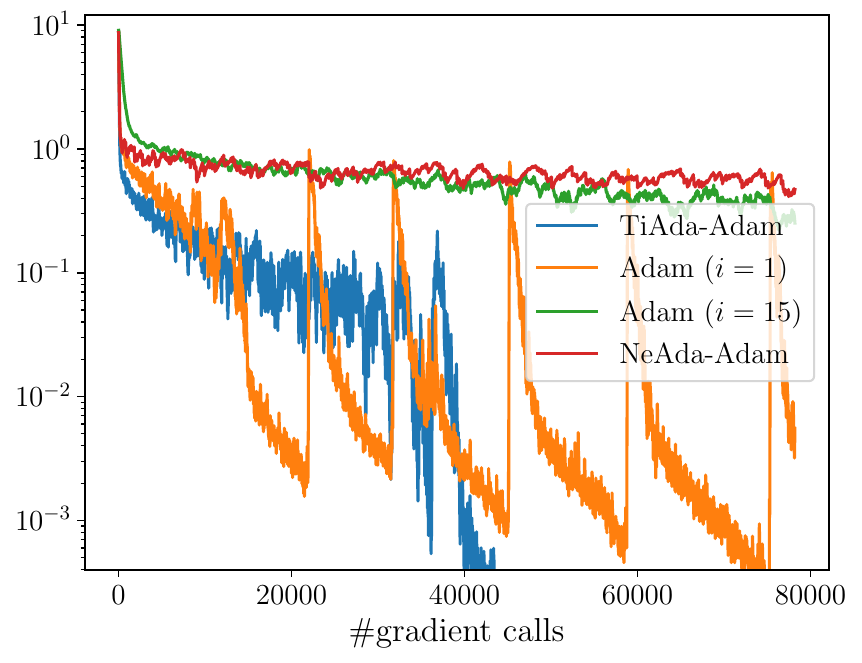}
      \caption{\tiny $\eta^x=0.0005, \eta^y=0.001$}
    \end{subfigure}
    \begin{subfigure}[b]{0.24\textwidth}
      \centering
      \includegraphics[width=\textwidth]{./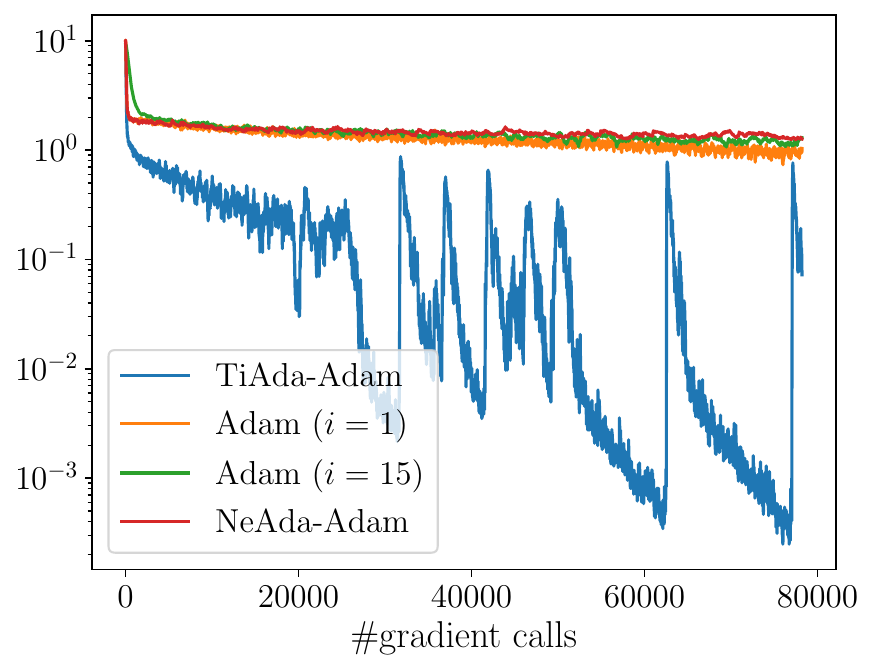}
      \caption{\tiny $\eta^x=0.0001, \eta^y=0.1$}
    \end{subfigure}
    \begin{subfigure}[b]{0.24\textwidth}
      \centering
      \includegraphics[width=\textwidth]{./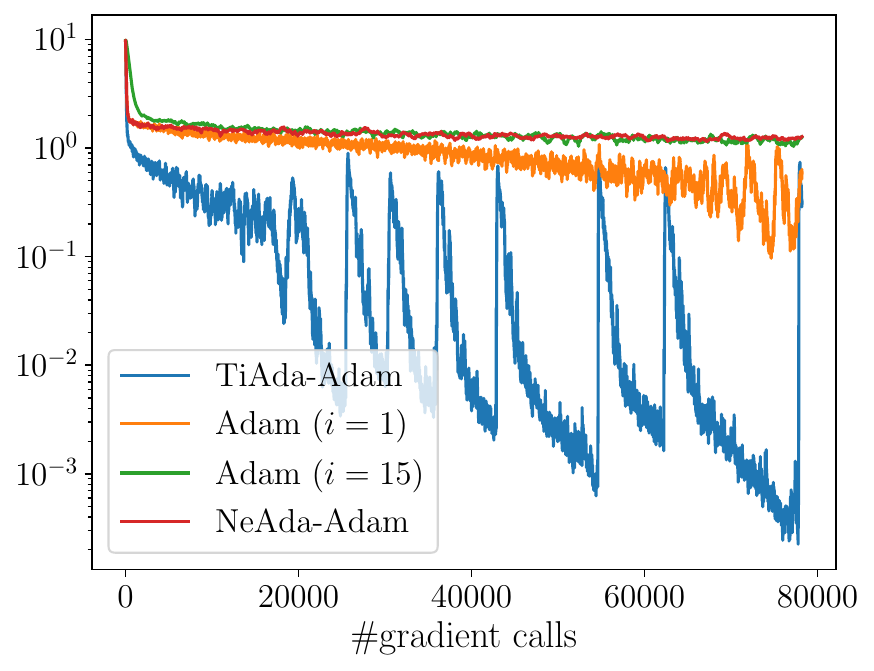}
      \caption{\tiny $\eta^x=0.0001, \eta^y=0.05$}
    \end{subfigure}
    \begin{subfigure}[b]{0.24\textwidth}
      \centering
      \includegraphics[width=\textwidth]{./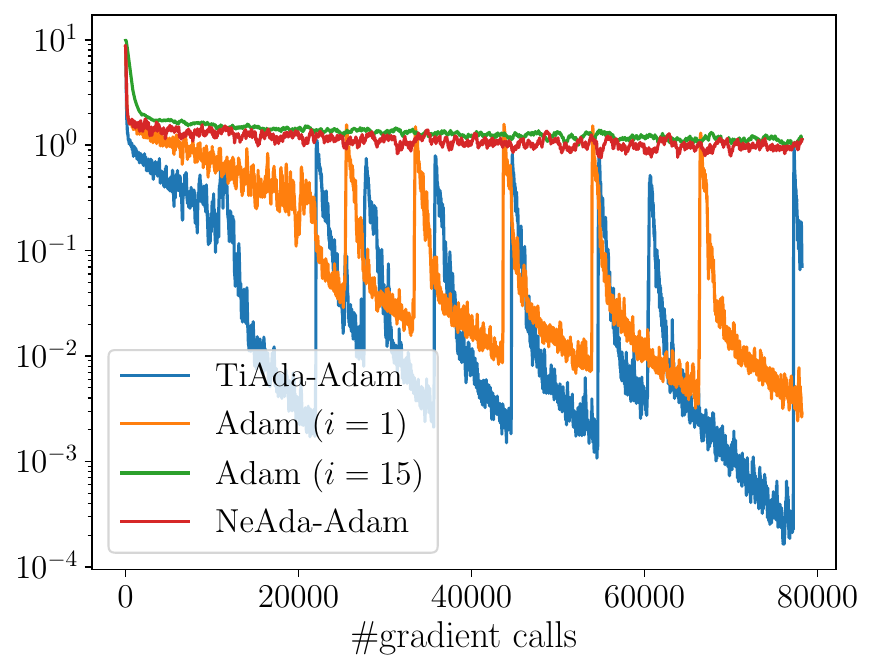}
      \caption{\tiny $\eta^x=0.0001, \eta^y=0.005$}
    \end{subfigure}
    \begin{subfigure}[b]{0.24\textwidth}
      \centering
      \includegraphics[width=\textwidth]{./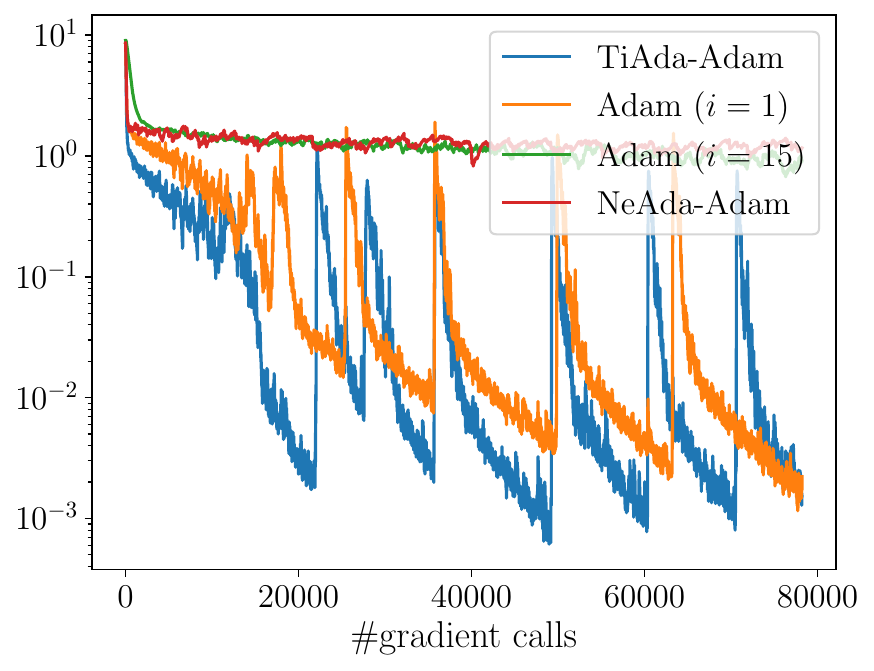}
      \caption{\tiny $\eta^x=0.0001, \eta^y=0.001$}
    \end{subfigure}
    \caption{Gradient norms in $x$ of Adam-like algorithms on distributional
      robustness optimization~\eqref{eq:dist_robust}.
    We use $i$ in the legend to indicate the number of inner loops.}
    \label{fig:adv_robust_all_adam}
\end{figure}

We use a grid of stepsize combinations to evaluate TiAda and compare it with NeAda and GDA with corresponding adaptive stepsizes. For AdaGrad-like algorithms, we use $\{0.1, 0.05, 0.01, 0.0005\}$ for both $\eta^x$ and $\eta^y$, and the results are reported in \Cref{fig:adv_robust_all_adagrad}.
For Adam-like algorithms, we use $\{0.001, 0.0005, 0.0001\}$
for $\eta^x$ and $\{0.1, 0.05, 0.005, 0.001\}$ for $\eta^y$,
and the results are shown in \Cref{fig:adv_robust_all_adam}.
We note that since Adam uses the reciprocal of the moving average of gradient norms, it is extremely unstable when the gradients are small. Therefore, Adam-like algorithms often experience instability when they are near stationary points.

\section{Helper Lemmas}

\begin{lemma}[Lemma~A.2 in \citet{yang2022nest}]
  \label{lemma:sum}
  Let $x_1, ..., x_T$ be a sequence of non-negative real numbers, $x_1 > 0$ and
  $0 < \alpha < 1$. Then we have 
  \[
    \left(\sum_{t=1}^T x_t\right)^{1-\alpha} 
      \leq \sum_{t=1}^T \frac{x_t}{\left(\sum_{k=1}^t x_k\right)^{\alpha}}
      \leq \frac{1}{1-\alpha} \left(\sum_{t=1}^T x_t\right)^{1-\alpha}.
  \]
  When $\alpha = 1$, we have
  \[
      \sum_{t=1}^T \frac{x_t}{\left(\sum_{k=1}^t x_k\right)^{\alpha}}
      \leq 1 + \log\left(\frac{\sum_{t=1}^t x_t}{x_1}\right).
  \]
\end{lemma}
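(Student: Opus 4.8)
The plan is to reduce every inequality to telescoping a single per-step bound. Introduce the partial sums $S_t := \sum_{k=1}^t x_k$ with $S_0 := 0$, so that $x_t = S_t - S_{t-1}$. Two elementary facts set up the whole argument: the sequence $(S_t)$ is nondecreasing (as the $x_t$ are nonnegative), and $S_t \geq S_1 = x_1 > 0$ for every $t \geq 1$. Hence all denominators $S_t^{\alpha}$ are strictly positive and $s \mapsto s^{-\alpha}$ is a well-defined decreasing function on $(0, S_T]$.

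For the upper bound I would compare each summand to an integral. Since $s^{-\alpha} \geq S_t^{-\alpha}$ on $[S_{t-1}, S_t]$, integrating gives $\tfrac{x_t}{S_t^{\alpha}} = S_t^{-\alpha}(S_t - S_{t-1}) \leq \int_{S_{t-1}}^{S_t} s^{-\alpha}\,ds = \tfrac{1}{1-\alpha}\big(S_t^{1-\alpha} - S_{t-1}^{1-\alpha}\big)$. Summing over $t = 1, \dots, T$ telescopes to $\tfrac{1}{1-\alpha}\big(S_T^{1-\alpha} - S_0^{1-\alpha}\big) = \tfrac{1}{1-\alpha} S_T^{1-\alpha}$, where the assumption $0 < \alpha < 1$ is used both to make $S_0^{1-\alpha} = 0$ and, at $t = 1$, to guarantee that the improper integral $\int_0^{S_1} s^{-\alpha}\,ds$ converges.

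For the lower bound I would prove the per-step inequality $S_t^{1-\alpha} - S_{t-1}^{1-\alpha} \leq \tfrac{x_t}{S_t^{\alpha}}$ and telescope it (again $S_0^{1-\alpha}=0$ gives $S_T^{1-\alpha}$ on the left). Writing $\tfrac{x_t}{S_t^{\alpha}} = S_t^{1-\alpha} - S_{t-1} S_t^{-\alpha}$, the claim is equivalent to $S_{t-1} S_t^{-\alpha} \leq S_{t-1}^{1-\alpha}$, which after cancelling the common factor $S_{t-1}$ reduces to $S_t^{-\alpha} \leq S_{t-1}^{-\alpha}$ — true by monotonicity of $(S_t)$, and trivially valid at $t = 1$ since both sides vanish when $S_0 = 0$. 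For the $\alpha = 1$ case I would run the same integral comparison with $1/s$: for $t \geq 2$ one has $\tfrac{x_t}{S_t} = 1 - \tfrac{S_{t-1}}{S_t} \leq \int_{S_{t-1}}^{S_t} \tfrac{ds}{s} = \log(S_t/S_{t-1})$ (equivalently $\log u \leq u - 1$), while the first term equals exactly $x_1/S_1 = 1$; telescoping the remaining terms yields $\sum_{t=1}^T \tfrac{x_t}{S_t} \leq 1 + \log(S_T/x_1)$.

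The argument is elementary, so the \emph{main obstacle} is only a matter of care at the boundary $t = 1$: there $S_0 = 0$ forces the improper integral, which is precisely what makes the hypothesis $0 < \alpha < 1$ indispensable for the two-sided bound. The other point to watch is keeping the denominator $S_t$ (rather than $S_{t-1}$) throughout, so that the telescoped constants come out as $1$ on the left and $\tfrac{1}{1-\alpha}$ on the right; possible zero entries $x_t = 0$ need no separate treatment, since then both the summand $\tfrac{x_t}{S_t^{\alpha}}$ and the telescoped increment $S_t^{1-\alpha} - S_{t-1}^{1-\alpha}$ vanish simultaneously.
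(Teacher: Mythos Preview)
Your proof is correct. The paper does not supply its own proof of this lemma; it is quoted verbatim as Lemma~A.2 from \citet{yang2022nest} and used as a black box. Your telescoping/integral-comparison argument is the standard one for such AdaGrad-type sums and matches what is done in that reference, so there is nothing further to compare.
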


\begin{lemma}[smoothness of $\Phi(\cdot)$ and Lipschitzness of $y^*(\cdot)$. Lemma~4.3 in \citet{lin2020gradient}]
  \label{lemma:lip_y_star}
  Under \Cref{assume:strong-convex,assume:smoothness}, we have
  $\Phi(\cdot)$ is $(l+\rk l)$-smooth with $\nabla \Phi(x) = \nabla_x f(x, y^*(x))$,
  and $y^*(\cdot)$ is $\rk$-Lipschitz.
\end{lemma}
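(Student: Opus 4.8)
The plan is to prove the two assertions in sequence: first the $\rk$-Lipschitzness of $y^*(\cdot)$, then the gradient identity $\nabla \Phi(x) = \nabla_x f(x, y^*(x))$ via Danskin's theorem, and finally the $(l + \rk l)$-smoothness of $\Phi(\cdot)$ by chaining the first two facts. Throughout I would use only \Cref{assume:strong-convex,assume:smoothness}, as stated.

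First I would establish that $y^*(\cdot)$ is $\rk$-Lipschitz. Fix $x_1, x_2$ and abbreviate $y_1 := y^*(x_1)$ and $y_2 := y^*(x_2)$; these are well-defined and unique because $f(x_i, \cdot)$ is $\mu$-strongly concave. Since each $y_i$ maximizes $f(x_i, \cdot)$ over the convex set $\cY$, the first-order optimality (variational inequality) gives $\inp*{\nabla_y f(x_1, y_1)}{y_2 - y_1} \leq 0$ and $\inp*{\nabla_y f(x_2, y_2)}{y_1 - y_2} \leq 0$. Adding these and regrouping yields $\inp*{\nabla_y f(x_1, y_1) - \nabla_y f(x_2, y_2)}{y_2 - y_1} \leq 0$. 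The key step is to insert the mixed term $\nabla_y f(x_1, y_2)$ and split into two pieces: strong concavity (\Cref{assume:strong-convex}) gives $\inp*{\nabla_y f(x_1, y_1) - \nabla_y f(x_1, y_2)}{y_1 - y_2} \leq -\mu \norm*{y_1 - y_2}^2$, so the first piece contributes at least $\mu \norm*{y_1 - y_2}^2$, while smoothness in $x$ (\Cref{assume:smoothness}, evaluated at the pair $(x_1,y_2)$ and $(x_2,y_2)$) bounds $\norm*{\nabla_y f(x_1, y_2) - \nabla_y f(x_2, y_2)} \leq l \norm*{x_1 - x_2}$. Combining these with Cauchy--Schwarz gives $\mu \norm*{y_1 - y_2}^2 \leq l \norm*{x_1 - x_2}\, \norm*{y_1 - y_2}$, hence $\norm*{y_1 - y_2} \leq (l/\mu)\norm*{x_1 - x_2} = \rk \norm*{x_1 - x_2}$.

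Next I would establish the gradient identity $\nabla \Phi(x) = \nabla_x f(x, y^*(x))$. Because strong concavity makes the inner maximizer unique for every $x$, Danskin's theorem applies to the envelope $\Phi(x) = \max_{y \in \cY} f(x, y)$: it is differentiable with gradient equal to the partial gradient of $f$ evaluated at the maximizer, i.e. $\nabla_x f(x, y^*(x))$. Alternatively, one can argue directly by bounding $\autoabs{\Phi(x + h) - \Phi(x) - \inp*{\nabla_x f(x, y^*(x))}{h}}$, using smoothness of $f$ together with the just-proved Lipschitzness of $y^*$ to control the remainder to $o(\norm*{h})$.

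Finally, smoothness of $\Phi$ follows by chaining. Writing $\norm*{\nabla \Phi(x_1) - \nabla \Phi(x_2)} = \norm*{\nabla_x f(x_1, y^*(x_1)) - \nabla_x f(x_2, y^*(x_2))}$, I would apply \Cref{assume:smoothness} to bound this by $l\autopar{\norm*{x_1 - x_2} + \norm*{y^*(x_1) - y^*(x_2)}}$, then substitute the $\rk$-Lipschitz estimate on $y^*$ to obtain $l(1 + \rk)\norm*{x_1 - x_2} = (l + \rk l)\norm*{x_1 - x_2}$, which is the claim. I expect the main subtlety to be the rigorous justification of the gradient formula through Danskin's theorem, which hinges on the uniqueness of $y^*(x)$ supplied by strong concavity; everything else is a routine manipulation of the defining optimality inequalities together with the smoothness estimates.
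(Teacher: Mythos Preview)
Your proposal is correct and follows the standard argument (variational inequality plus strong monotonicity for the Lipschitz bound on $y^*$, Danskin for the gradient identity, then chaining for the smoothness of $\Phi$). The paper itself does not prove this lemma at all---it simply cites \citet{lin2020gradient}, Lemma~4.3---so there is nothing further to compare; your write-up is exactly the proof that reference gives.
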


\begin{lemma}[smoothness of $y^*(\cdot)$. Lemma~2 in \citet{chen2021closing}]
  \label{lemma:smooth_y_star}
  Under \Cref{assume:strong-convex,assume:smoothness,assume:lipschitz},
  we have that with $\widehat{L}=\frac{L + L\rk}{\mu} + \frac{l(L+L\rk)}{\mu^2}$,
  \[
    \norm*{\nabla y^*(x_1) - \nabla y^*(x_2)} \leq \widehat{L} \norm*{x_1 - x_2}.
  \]
\end{lemma}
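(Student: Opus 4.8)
The plan is to obtain an explicit formula for the Jacobian $\nabla y^*(x)$ via the implicit function theorem and then bound the difference of two such Jacobians. The starting point is the first-order optimality condition: since \Cref{assume:interior_optimal} places $y^*(x)$ in the interior of $\cY$, the inner maximizer satisfies $\nabla_y f(x, y^*(x)) = 0$ for every $x$. Strong concavity (\Cref{assume:strong-convex}) makes $\nabla_{yy}^2 f(x, y^*(x)) \preceq -\mu I$ invertible, so the implicit function theorem guarantees $y^*(\cdot)$ is differentiable and, differentiating the optimality condition in $x$,
\[
  \nabla_{yx}^2 f(x, y^*(x)) + \nabla_{yy}^2 f(x, y^*(x))\, \nabla y^*(x) = 0,
\]
which yields $\nabla y^*(x) = -\left[\nabla_{yy}^2 f(x, y^*(x))\right]^{-1} \nabla_{yx}^2 f(x, y^*(x))$.

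Writing $A(x) = \nabla_{yy}^2 f(x, y^*(x))$ and $B(x) = \nabla_{yx}^2 f(x, y^*(x))$, I would split the difference through a telescoping insertion,
\[
  \nabla y^*(x_1) - \nabla y^*(x_2) = A(x_1)^{-1}\left(B(x_2) - B(x_1)\right) + \left(A(x_2)^{-1} - A(x_1)^{-1}\right) B(x_2),
\]
and bound the two summands separately. For the first, strong concavity gives $\norm*{A(x_1)^{-1}} \le 1/\mu$, while \Cref{assume:lipschitz} (using $\nabla_{xy}^2 f = (\nabla_{yx}^2 f)^\top$, whose spectral norm is transpose-invariant) combined with the $\rk$-Lipschitzness of $y^*$ from \Cref{lemma:lip_y_star} gives $\norm*{B(x_1) - B(x_2)} \le L(1 + \rk)\norm*{x_1 - x_2}$; this produces the contribution $\tfrac{L + L\rk}{\mu}\norm*{x_1 - x_2}$.

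For the second summand I would invoke the resolvent identity $A(x_2)^{-1} - A(x_1)^{-1} = A(x_2)^{-1}(A(x_1) - A(x_2)) A(x_1)^{-1}$, giving $\norm*{A(x_2)^{-1} - A(x_1)^{-1}} \le \tfrac{1}{\mu^2}\norm*{A(x_1) - A(x_2)}$, then apply the $\nabla_{yy}^2$ bound of \Cref{assume:lipschitz} with $\rk$-Lipschitzness of $y^*$ to get $\norm*{A(x_1) - A(x_2)} \le L(1 + \rk)\norm*{x_1 - x_2}$, and bound $\norm*{B(x_2)} \le l$ from $l$-smoothness (\Cref{assume:smoothness}). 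This produces $\tfrac{l(L + L\rk)}{\mu^2}\norm*{x_1 - x_2}$, and summing the two contributions gives exactly $\widehat{L} = \tfrac{L + L\rk}{\mu} + \tfrac{l(L + L\rk)}{\mu^2}$. The only genuine subtlety, and the step I would treat most carefully, is rigorously justifying the differentiability of $y^*$ and the Jacobian formula through the implicit function theorem (invertibility of $A(x)$ being immediate from strong concavity), together with keeping the spectral norms and the transpose identity consistent; the remaining estimates are routine applications of the stated assumptions.
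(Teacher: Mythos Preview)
The paper does not prove this lemma itself; it is listed among the helper lemmas with a citation to \citet{chen2021closing} and no argument is given. Your implicit-function-theorem derivation is correct and is precisely the standard route that produces the stated constant $\widehat{L}$: the Jacobian formula $\nabla y^*(x) = -[\nabla_{yy}^2 f]^{-1}\nabla_{yx}^2 f$, the telescoping split, the resolvent identity for the inverse difference, and the bounds $\norm*{A^{-1}}\le 1/\mu$, $\norm*{B}\le l$, $\norm*{A(x_1)-A(x_2)},\norm*{B(x_1)-B(x_2)}\le L(1+\rk)\norm*{x_1-x_2}$ all combine exactly as you describe.

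One small bookkeeping remark: you invoke \Cref{assume:interior_optimal} to guarantee $\nabla_y f(x,y^*(x))=0$ and hence the applicability of the implicit function theorem, yet the lemma as stated lists only \Cref{assume:strong-convex,assume:smoothness,assume:lipschitz}. This is not a real gap---the lemma is only used in the proof of \Cref{theorem:tiada_stoc}, where \Cref{assume:interior_optimal} is in force, and in the cited source the inner problem is unconstrained---but if you write the proof out in full it is cleaner either to add \Cref{assume:interior_optimal} to the hypotheses or to note explicitly that the constrained optimum is assumed interior wherever the Jacobian $\nabla y^*$ is invoked.
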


\section{Proofs}

For notational convenience in the proofs, we denote the stochastic gradient as
$\nabla_x \widetilde{f}(x_t, y_t)$
and $\nabla_y \widetilde{f}(x_t, y_t)$. Also denote 
$y^*_t = y^*(x_t)$, 
$\eta_t = \frac{\eta^x}{\max\left\{v^x_{t+1}, v^y_{t+1}\right\}^{\alpha}}$,
$\gamma_t = \frac{\eta^y}{\left(v^y_{t+1}\right)^{\beta}}$, $\Phi^* = \min_{x \in \bR^{d_1}} \Phi(x)$,
and $\Delta \Phi = \Phi_{\max} - \Phi^*$.
We use $\mathbf{1}$ as the indicator function.

\subsection{Proof of Theorem~\ref{theorem:tiada_determ}}

We present a formal version of \Cref{theorem:tiada_determ}.
\begin{theorem}[deterministic setting] \label{theorem:tiada_determ_formal}
  Under \Cref{assume:strong-convex,assume:smoothness,assume:interior_optimal},
  \Cref{alg:tiada} with deterministic gradient
  oracles satisfies that for any $0 < \beta < \alpha < 1$, after $T$ iterations,
  \begin{align*}
    \sum_{t=0}^{T-1}\norm*{\nabla_x f(x_t, y_t)}^2 \leq
    \max\left\{5 C_1, 2 C_2\right\},
  \end{align*}
  where 
  \begin{align*}
    C_1 &= v_0^x + \left(\frac{2\Delta \Phi}{\eta^x}\right)^{\frac{1}{1-\alpha}} + \left( \frac{4\rk l e^{(1-\alpha)(1-\log v_0^x)/2}}{e(1-\alpha) \left(v_{0}^x\right)^{2\alpha-1} } \right)^{\frac{2}{1-\alpha}} \mathbf{1}_{2\alpha \geq 1} 
    +  \left(\frac{2\rk l}{1-2\alpha}\right)^{\frac{1}{\alpha}} \mathbf{1}_{2\alpha < 1}  \\
    &\fakeeq + \left(\frac{c_1 c_5}{\eta^x}\right)^{\frac{1}{1-\alpha}} + \left(\frac{2 c_1 c_4 \eta^x e^{(1-\alpha)(1-\log v_0^x)/2}}{e(1-\alpha) \left(v_{0}^x\right)^{2\alpha - \beta - 1}}\right)^{\frac{2}{1-\alpha}} \mathbf{1}_{2\alpha -\beta \geq 1}
    + \left(\frac{c_1 c_4 \eta^x}{1-2\alpha + \beta}\right)^{\frac{1}{\alpha - \beta}} \mathbf{1}_{2\alpha -\beta < 1} \\
    C_2 &= v_0^x + \Bigg[  \left( \frac{2\Delta \Phi + c_1 c_5 }{\eta^x \left(v_{0}^x\right)^{1-2\alpha+\beta}}
    + \frac{c_1 c_4 \eta^x }{1-2\alpha + \beta}
    +  \frac{2\rk l e^{(1-2\alpha+\beta)(1-\log v_0^x)}}{ e (1-2\alpha + \beta) \left(v_{0}^x\right)^{2\alpha-1} } \mathbf{1}_{2\alpha \geq 1} 
    +  \frac{2\rk l}{(1-2\alpha)\left(v_{0}^x\right)^{\beta}} \mathbf{1}_{2\alpha < 1} 
     \right) \\
    &\fakeeq \left( \frac{c_5}{\left(v_{0}^x\right)^{1-2\alpha+\beta}} + \frac{c_4 \left(\eta^x\right)^2 }{1-2\alpha + \beta} \right)^{\frac{\alpha}{1-\beta}} \Bigg]^{\frac{1}{1 - (1-2\alpha+\beta) \left(1 +\frac{\alpha}{1-\beta}\right)}}
    \mathbf{1}_{2 \alpha - \beta < 1} \\
    &\fakeeq+ \Bigg[ \left( \frac{2\Delta \Phi + c_1 c_5}{\eta^x \left(v_{0}^x\right)^{1/4}} 
  + \frac{8\rk l e^{(1-\log v_0^x)/4}}{e \left(v_{0}^x\right)^{2\alpha-1} } 
   +  \frac{4 c_1 c_4 \eta^x e^{(1-\log v_0^x)/4}}{e \left(v_{0}^x\right)^{2\alpha - \beta - 1} } \right) \\
    &\fakeeq \left( \frac{c_5}{\left(v_{0}^x\right)^{\frac{(1-\beta)}{4\alpha}}} 
    +  \frac{4 c_4 \alpha \left(\eta^x\right)^2 e^{(1-\beta)(1-\log v_0^x)/(4\alpha)}}{e (1-\beta) \left(v_{0}^x\right)^{2\alpha - \beta - 1} }
\right)^{\frac{\alpha}{1-\beta}} \Bigg]^2 \mathbf{1}_{2\alpha \geq 1}, \\
\text{ with }
  \Delta \Phi &= \Phi(x_0) - \Phi^*, \quad
  c_1 =  \frac{\eta^x \rk^2 }{\eta^y \left(v^y_{0}\right)^{\alpha - \beta} }, \quad
  c_2 = \max\left\{\frac{4\eta^y \mu l}{\mu + l}, \eta^y(\mu + l)\right\}, \\
  c_3 &=  4 (\mu + l) \left(\frac{1}{\mu^2} + \frac{\eta^y}{\left(v^y_{0}\right)^{\beta}} \right) c_2^{1/\beta}, \quad
  c_4 =  (\mu + l)\left( \frac{2\rk^2}{\left(v^y_{0}\right)^{\alpha}} + \frac{(\mu + l) \rk^2 }{\eta^y \mu l} \right), \quad
  c_5 =  c_3 + \frac{\eta^y v^y_0}{\left(v_0^y\right)^{\beta}} + \frac{\eta^y c_2^{\frac{1-\beta}{\beta}}}{1-\beta}.
  \end{align*}
  In addition, denoting the above upper bound for $\sum_{t=0}^{T-1}\norm*{\nabla_x f(x_t, y_t)}^2$
  as $C_3$, we have
  \begin{align*}
  \sum_{t=0}^{T-1}\norm*{\nabla_y f(x_t, y_t)}^2
  \leq \left( c_5 + c_4 \left(\eta^x\right)^2 \left( \frac{1 + \log C_3 - \log v^x_0}{\left(v_{0}^x\right)^{2\alpha - \beta - 1}} \mathbf{1}_{2\alpha -\beta \geq 1}
      + \frac{C_3^{1-2\alpha + \beta} }{1-2\alpha + \beta} \mathbf{1}_{2\alpha -\beta < 1} \right) \right)^{\frac{1}{1-\beta}}.
  \end{align*}
\end{theorem}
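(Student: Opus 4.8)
The plan is to run the classical primal-function potential analysis for two-time-scale GDA, but to accommodate the data-dependent AdaGrad stepsizes via \Cref{lemma:sum} and to close everything with a self-bounding argument, organized around the two stages of \Cref{sec:time_adaptivity}. The central object is $\Phi(x_t)=f(x_t,y^*(x_t))$, which by \Cref{lemma:lip_y_star} is $(l+\rk l)$-smooth with $\nabla\Phi(x_t)=\nabla_x f(x_t,y^*_t)$. I would first write the descent inequality
\[
\Phi(x_{t+1})\le\Phi(x_t)-\eta_t\langle\nabla_x f(x_t,y^*_t),\nabla_x f(x_t,y_t)\rangle+\tfrac{(l+\rk l)\eta_t^2}{2}\|\nabla_x f(x_t,y_t)\|^2,
\]
split $\nabla_x f(x_t,y^*_t)=\nabla_x f(x_t,y_t)+\bigl(\nabla_x f(x_t,y^*_t)-\nabla_x f(x_t,y_t)\bigr)$, and control the mismatch by $l\|y_t-y^*_t\|$ using \Cref{assume:smoothness}. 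Telescoping and using $\eta_t=\eta^x/\max\{v^x_{t+1},v^y_{t+1}\}^\alpha\le\eta^x/(v^x_{t+1})^\alpha$ leaves a leading term $\sum_t\eta_t\|\nabla_x f_t\|^2$, a cross term $\sum_t\eta_t l\|y_t-y^*_t\|\,\|\nabla_x f_t\|$, and a second-order term $\sum_t\eta_t^2\|\nabla_x f_t\|^2$, the last two of which are bounded by \Cref{lemma:sum}.

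Next I would close the loop on the tracking error $\|y_t-y^*_t\|^2$. From the projected ascent step, $\mu$-strong concavity (\Cref{assume:strong-convex}), and $\rk$-Lipschitzness of $y^*$ (so $\|y^*_{t+1}-y^*_t\|\le\rk\eta_t\|\nabla_x f_t\|$, via \Cref{lemma:lip_y_star}), one gets a one-step contraction $\|y_{t+1}-y^*_{t+1}\|^2\le(1-c\mu\gamma_t)\|y_t-y^*_t\|^2+\mathcal{O}\!\bigl(\rk^2\eta_t^2\|\nabla_x f_t\|^2\bigr)$, valid only once $\gamma_t=\eta^y/(v^y_{t+1})^\beta$ is small enough. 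This is exactly where time-scale separation enters: I define $t_0$ as the first index with $v^y_{t}$ past the threshold $c_2^{1/\beta}$ appearing in the constants, so that Stage~I ($t<t_0$) contributes only a constant (bounded increments, giving the $c_3,c_5$ terms) and Stage~II ($t\ge t_0$) enjoys the contraction. Summing the recursion and again invoking \Cref{lemma:sum} expresses $\sum_t\gamma_t\|y_t-y^*_t\|^2$, hence $\sum_t\|y_t-y^*_t\|^2$, in terms of $\sum_t\eta_t^2\|\nabla_x f_t\|^2$, i.e.\ a power of $v^x_T=v^x_0+\sum_t\|\nabla_x f_t\|^2$.

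Substituting the tracking-error bound back into the $\Phi$-telescoping, and lower-bounding the leading term by $\eta^x S_x/(v^x_T+v^y_T)^\alpha$ where $S_x:=\sum_{t=0}^{T-1}\|\nabla_x f_t\|^2$, yields a single scalar inequality of self-bounding type $(S_x)^{1-\alpha}\lesssim A+B\,(S_x)^{q}$, whose constants $A,B$ collect $\Delta\Phi/\eta^x$, the Stage-I constants, and the smoothness/condition-number factors. The exponents on the right come from $\sum_t\eta_t^2\|\nabla_x f_t\|^2$ (governed by whether $2\alpha\ge1$ or $2\alpha<1$) and from the cross term through the tracking error (governed by whether $2\alpha-\beta\ge1$ or $2\alpha-\beta<1$); these dichotomies are precisely the source of the indicator cases in $C_1,C_2$. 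In the borderline/log regime I would convert the logarithmic output of \Cref{lemma:sum} into a small power via $\log z\le\frac{2}{e(1-\alpha)}z^{(1-\alpha)/2}$, which produces the $e^{(1-\alpha)(1-\log v_0^x)/2}$ factors. Solving the self-bounding inequality (the elementary fact that $z\le A+Bz^{q}$ with $q<1$ forces $z\le\max\{(2A)^{1/(1-q)},\ldots\}$) gives $S_x\le\max\{5C_1,2C_2\}$; finally, with $S_x=:C_3$ fixed, revisiting the summed dual contraction and bounding $\sum_t\eta_t^2\|\nabla_x f_t\|^2$ by a power of $C_3$ yields the stated bound on $\sum_t\|\nabla_y f_t\|^2$.

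The main obstacle I anticipate is the circular coupling: the $\Phi$-descent needs the tracking-error bound, the tracking-error recursion needs the contraction which needs $t\ge t_0$, and both feed one self-bounding inequality. The crux is verifying that the inequality is genuinely solvable for a $T$-independent constant, i.e.\ that every exponent on the right is strictly below the leading $1-\alpha$; the dangerous cross-term exponent works out to $1-2\alpha+\beta$, and the hypothesis $0<\beta<\alpha<1$ is exactly what gives $1-2\alpha+\beta<1-\alpha$, so the constraint $\alpha>\beta$ is not cosmetic but the structural reason the argument terminates. Managing this together with the log-to-power conversions and the bookkeeping across the $2\alpha$-versus-$1$ and $(2\alpha-\beta)$-versus-$1$ case splits is where the real effort lies; the remaining manipulations are careful but routine applications of \Cref{lemma:sum}.
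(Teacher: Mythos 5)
Your proposal follows essentially the same route as the paper's proof: a descent inequality on $\Phi$ with the mismatch controlled via smoothness/strong-concavity, the max-structure stepsize ratio bound with a threshold time $t_0$ defined by $v^y$ exceeding $c_2^{1/\beta}$ (Stage I contributing only constants, Stage II giving the contraction), repeated use of \Cref{lemma:sum}, the log-to-power conversion, and a final self-bounding inequality in $v^x_T$ resolved through the same $2\alpha \gtrless 1$ and $2\alpha-\beta \gtrless 1$ case splits, with the dual bound recovered from $C_3$ at the end. Your only deviations are cosmetic — you carry the tracking error $\|y_t-y^*_t\|^2$ where the paper telescopes directly into $\sum_t \gamma_t\|\nabla_y f(x_t,y_t)\|^2$ (interchangeable via $\mu\|y_t-y^*_t\|\le\|\nabla_y f(x_t,y_t)\|\le l\|y_t-y^*_t\|$ under \Cref{assume:interior_optimal}), and you use $(v^x_T+v^y_T)^\alpha$ in place of the paper's $\max\{v^x_T,v^y_T\}^\alpha$ — and you correctly identify $\alpha>\beta$ as the structural reason the self-bounding exponents stay below one.
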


\begin{proof}
  Let us start from the smoothness of the primal function 
  $\Phi(\cdot)$. By \Cref{lemma:lip_y_star},
  \begin{align*}
    &\fakeeq \RP(x_{t+1})  \\
    &\leq \Phi(x_t) - \eta_t \inp*{\Phi(x_{t+1})}{\nabla_x f(x_t, y_t)}
      + kl\eta_t^2 \norm*{\nabla_x f(x_t, y_t)}^2 \\
    &= \Phi(x_t) - \eta_t \norm*{\nabla_x f(x_t, y_t)}^2 + \eta_t 
    \inp*{\nabla_x f(x_t, y_t) - \nabla \Phi(x_t)}{\nabla_x f(x_t, y_t)}
      + kl\eta_t^2 \norm*{\nabla_x f(x_t, y_t)}^2 \\
    &\leq \Phi(x_t) - \eta_t \norm*{\nabla_x f(x_t, y_t)}^2 + \frac{\eta_t}{2} 
    \norm*{\nabla_x f(x_t, y_t)}^2 + \frac{\eta_t}{2}\norm*{\nabla_x f(x_t, y_t) - \nabla \Phi(x_t)}^2
      + kl\eta_t^2 \norm*{\nabla_x f(x_t, y_t)}^2 \\
    &= \Phi(x_t) - \frac{\eta_t}{2} \norm*{\nabla_x f(x_t, y_t)}^2
      + kl \eta_t^2 \norm*{\nabla_x f(x_t, y_t)}^2 
      + \frac{\eta_t}{2} \norm*{\nabla_x f(x_t, y_t) - \nabla \Phi(x_{t})}^2 \\
    &= \Phi(x_t) - \frac{\eta_t}{2} \norm*{\nabla_x f(x_t, y_t)}^2
      + kl \eta_t^2 \norm*{\nabla_x f(x_t, y_t)}^2 
      + \frac{\eta^x}{2 \max\left\{v^x_{t+1}, v^y_{t+1}\right\}^{\alpha}} \norm*{\nabla_x f(x_t, y_t) - \nabla \Phi(x_{t})}^2 \\
    &\leq \Phi(x_t) - \frac{\eta_t}{2} \norm*{\nabla_x f(x_t, y_t)}^2
      + kl \eta_t^2 \norm*{\nabla_x f(x_t, y_t)}^2 
      + \frac{\eta^x}{2 \left(v^y_{0}\right)^{\alpha - \beta} \left(v^y_{t+1}\right)^{\beta}} \norm*{\nabla_x f(x_t, y_t) - \nabla \Phi(x_{t})}^2 \\
    &\leq \Phi(x_t) - \frac{\eta_t}{2} \norm*{\nabla_x f(x_t, y_t)}^2
      + kl \eta_t^2 \norm*{\nabla_x f(x_t, y_t)}^2 
      + \frac{\eta^x \rk^2}{2 \left(v^y_{0}\right)^{\alpha - \beta} \left(v^y_{t+1}\right)^{\beta}} \norm*{\nabla_y f(x_t, y_t)}^2 \\
    &\leq \Phi(x_t) - \frac{\eta_t}{2} \norm*{\nabla_x f(x_t, y_t)}^2
      + kl \eta_t^2 \norm*{\nabla_x f(x_t, y_t)}^2 
      + \frac{\eta^x \rk^2 }{2 \eta^y \left(v^y_{0}\right)^{\alpha - \beta} } \cdot \rg_t \norm*{\nabla_y f(x_t, y_t)}^2,
  \end{align*}
  where in the second to last inequality, we used the strong-concavity of $f(x, \cdot)$:
  \[
    \norm*{\nabla_x f(x_t, y_t) - \nabla \Phi(x_{t})} \leq l \norm*{y_t - y^*_t} \leq \rk \norm*{\nabla_y f(x_t, y_t)}.
  \]
  Telescoping and rearranging the terms, we have
  \begin{align*}
    &\fakeeq 
    \sum_{t=0}^{T-1} \eta_t \norm*{\nabla_x f(x_t, y_t)}^2 \\
    &\leq 2 \underbrace{ \left(\Phi(x_0) - \Phi^*\right)}_{\Delta \Phi}
      + 2 \rk l \sum_{t=0}^{T-1} \eta_t^2 \norm*{\nabla_x f(x_t, y_t)}^2 + 
      \underbrace{ \frac{\eta^x \rk^2 }{\eta^y \left(v^y_{0}\right)^{\alpha - \beta} } }_{c_1} \sum_{t=0}^{T-1} \rg_t \norm*{\nabla_y f(x_t, y_t)}^2 \\
    &= 2\Delta \Phi + \sum_{t=0}^{T-1} \frac{2 \rk l \eta^x}{\max\left\{v^x_{t+1}, v^y_{t+1}\right\}^{2\alpha}} \norm*{\nabla_x f(x_t, y_t)}^2 
    + c_1 \sum_{t=0}^{T-1} \rg_t \norm*{\nabla_y f(x_t, y_t)}^2 \\
    &\leq 2\Delta \Phi + \sum_{t=0}^{T-1} \frac{2 \rk l \eta^x}{\left(v^x_{t+1}\right)^{2\alpha}} \norm*{\nabla_x f(x_t, y_t)}^2 
    + c_1 \sum_{t=0}^{T-1} \rg_t \norm*{\nabla_y f(x_t, y_t)}^2 \\
    &\leq 2\Delta \Phi + 2\rk l \eta^x \left( \frac{1 + \log v_T^x - \log v_0^x}{\left(v_{0}^x\right)^{2\alpha-1}} \cdot \mathbf{1}_{2\alpha \geq 1} 
    +  \frac{\left(v_{T}^x\right)^{1-2\alpha}}{1-2\alpha} \cdot \mathbf{1}_{2\alpha < 1} \right)
    + c_1 \sum_{t=0}^{T-1} \rg_t \norm*{\nabla_y f(x_t, y_t)}^2. \numberthis \label{eq:mm1}
  \end{align*}

  We proceed to bound $\sum_{t=0}^{T-1} \rg_t \norm*{\nabla_y f(x_t, y_t)}^2$. 
  Let $t_0$ be the first iteration such that 
  $\left(v^y_{t_0+1}\right)^{\beta} > c_2 :=  \max\left\{\frac{4\eta^y \mu l}{\mu + l}, \eta^y(\mu + l)\right\}$.
  If $t_0$ does not exists, then the sum can be trivially bounded with $c_2$, therefore
  we consider the case when $t_0$ exists.
  We have $v^y_{t_0} \leq c_2^{1/\beta}$, and
  for $t \geq t_0$,
  \begin{align*}
    &\fakeeq \norm*{y_{t+1} - y_{t+1}^*}^2 \\
    &\leq (1+\rl_t) \norm*{y_{t+1} - y_t^*}^2
      + \left(1 + \frac{1}{\rl_t}\right) \norm*{y_{t+1}^* - y_t^*}^2 \\
    &\leq \underbrace{(1+\rl_t) \left(\norm*{y_t - y_t^*}^2 + \frac{\left(\eta^y\right)^2}{\left(v^y_{t+1}\right)^{2\rb}}
      \norm*{\nabla_y f(x_t, y_t)}^2 
  + \frac{2\eta^y}{\left(v^y_{t+1}\right)^{\rb}} \inp*{y_t - y_t^*}{\nabla_y f(x_t, y_t)} \right)}_{\text{\labelterm{term:a}}} \\
    &\fakeeq + \left(1 + \frac{1}{\rl_t}\right) \norm*{y_{t+1}^* - y_t^*}^2,
  \end{align*}
  where $\lambda_t > 0$ will be determined later.
  For $l$-smooth and $\mu$-strongly convex function $g(x)$, according to Theorem~2.1.12 in
  \citet{nesterov2003introductory}, we have
  \[
    \inp*{\nabla g(x) - \nabla g(y)}{x - y} \geq \frac{\mu l}{\mu + l} \norm*{x - y}^2
    + \frac{1}{\mu + l} \norm*{\nabla g(x) - \nabla g(y)}^2.
  \]
  Therefore,
  \begin{align*}
    &\fakeeq \text{\Refterm{term:a}}  \\
    &\leq (1+\rl_t) \left(
      \left(1 - \frac{2\eta^y \mu l}{(\mu + l)\left(v^y_{t+1}\right)^{\beta} }\right) \norm*{y_t - y_t^*}^2
      + \left(\frac{\left(\eta^y\right)^2}{\left(v^y_{t+1}\right)^{2\rb}} - 
      \frac{2\eta^y}{(\mu + l)\left(v^y_{t+1}\right)^{\beta}}\right) \norm*{\nabla_y f(x_t, y_t)}^2
    \right).
  \end{align*}
  Let $\lambda_t = \frac{\eta^y \mu l}{(\mu + l) \left(v^y_{t+1}\right)^{\beta} - 2 \eta^y \mu l}$.
  Note that $\lambda_t > 0$ after $t_0$. Then
  \begin{align*}
    &\fakeeq \text{\Refterm{term:a}}  \\
    &\leq \left(1 - \frac{\eta^y \mu l}{(\mu + l) \left(v^y_{t+1}\right)^{\beta}}\right)
      \norm*{y_t - y_t^*}^2 
      + (1+\lambda_t)\left(\frac{\left(\eta^y\right)^2}{\left(v^y_{t+1}\right)^{2\rb}} - 
      \frac{2\eta^y}{(\mu + l)\left(v^y_{t+1}\right)^{\beta}}\right) \norm*{\nabla_y f(x_t, y_t)}^2 \\
    &\leq \norm*{y_t - y_t^*}^2 
    + \underbrace{(1+\lambda_t)\left(\frac{\left(\eta^y\right)^2}{\left(v^y_{t+1}\right)^{2\rb}} - 
    \frac{2\eta^y}{(\mu + l)\left(v^y_{t+1}\right)^{\beta}}\right)}_{\text{\labelterm{term:b}}} \norm*{\nabla_y f(x_t, y_t)}^2.
  \end{align*}
  As $1 + \lambda_t \geq 1$ and
  $\left(v^y_{t+1}\right)^{\beta} \geq \eta^y(\mu + l)$,
  we have $\text{\refterm{term:b}} \leq - \frac{\eta^y}{(\mu + l)\left(v^y_{t+1}\right)^{\beta}}$.
  Putting them back, we can get
  \begin{align*}
    &\fakeeq \norm*{y_{t+1} - y_{t+1}^*}^2 \\
    &\leq \norm*{y_t - y_t^*}^2 - \frac{\eta^y }{(\mu + l)\left(v^y_{t+1}\right)^{\beta}}
    \norm*{\nabla_y f(x_t, y_t)}^2 + \left(1 + \frac{1}{\rl_t}\right) \norm*{y_{t+1}^* - y_t^*}^2 \\
    &\leq \norm*{y_t - y_t^*}^2 - \frac{\eta^y }{(\mu + l)\left(v^y_{t+1}\right)^{\beta}}
    \norm*{\nabla_y f(x_t, y_t)}^2 + \frac{(\mu + l) \left(v^y_{t+1}\right)^{\beta}}{\eta^y \mu l} \norm*{y_{t+1}^* - y_t^*}^2 \\
    &\leq \norm*{y_t - y_t^*}^2 - \frac{\eta^y }{(\mu + l)\left(v^y_{t+1}\right)^{\beta}}
    \norm*{\nabla_y f(x_t, y_t)}^2 + \frac{(\mu + l) \rk^2 \left(v^y_{t+1}\right)^{\beta}}{\eta^y \mu l} \norm*{x_{x+1} - x_t}^2 \\
    &= \norm*{y_t - y_t^*}^2 - \frac{\eta^y }{(\mu + l)\left(v^y_{t+1}\right)^{\beta}}
    \norm*{\nabla_y f(x_t, y_t)}^2 + \frac{(\mu + l) \rk^2 \left(v^y_{t+1}\right)^{\beta}\eta_t^2}{\eta^y \mu l} \norm*{\nabla_x f(x_t, y_t)}^2.
  \end{align*}
  Then, by telescoping, we have
  \begin{align*}
    \sum_{t=t_0}^{T-1} \frac{\eta^y }{(\mu + l)\left(v^y_{t+1}\right)^{\beta}} \norm*{\nabla_y f(x_t, y_t)}^2
    &\leq \norm*{y_{t_0} - y_{t_0}^*}^2
    + \sum_{t=t_0}^{T-1} \frac{(\mu + l) \rk^2 \left(v^y_{t+1}\right)^{\beta}\eta_t^2}{\eta^y \mu l} \norm*{\nabla_x f(x_t, y_t)}^2. \numberthis \label{eq:pe3}
  \end{align*}
  For the first term in the RHS, using Young's inequality with $\tau$ to be
  determined later, we have
  \begin{align*}
    \norm*{y_{t_0} - y_{t_0}^*}^2
    &\leq 2 \norm*{y_{t_0} - y_{t_0 - 1}^*}^2 + 2 \norm*{y_{t_0}^* - y_{t_0 - 1}^*}^2 \\
    &= 2 \norm*{\cP_{\cY}\left(y_{t_0 - 1} + \rg_{t_0 - 1} \nabla_y f(x_{t_0 - 1}, y_{t_0 - 1})\right) - y_{t_0 - 1}^*}^2 + 2 \norm*{y_{t_0}^* - y_{t_0 - 1}^*}^2 \\
    &\leq 2 \norm*{y_{t_0 - 1} + \rg_{t_0 - 1} \nabla_y f(x_{t_0 - 1}, y_{t_0 - 1}) - y_{t_0 - 1}^*}^2 + 2 \norm*{y_{t_0}^* - y_{t_0 - 1}^*}^2 \\
    &\leq 4 \left(\norm*{y_{t_0 - 1} - y_{t_0 - 1}^*}^2 + \rg_{t_0 - 1}^2 \norm*{\nabla_y f(x_{t_0 - 1}, y_{t_0 - 1})}^2 \right) + 2 \norm*{y_{t_0}^* - y_{t_0 - 1}^*}^2 \\
    &\leq 4 \left(\frac{1}{\mu^2} \norm*{\nabla_y f(x_{t_0 - 1}, y_{t_0 - 1})}^2 + \rg_{t_0 - 1}^2 \norm*{\nabla_y f(x_{t_0 - 1}, y_{t_0 - 1})}^2 \right) + 2 \norm*{y_{t_0}^* - y_{t_0 - 1}^*}^2 \\
    &= 4 \left(\frac{1}{\mu^2} + \rg_{t_0 - 1}^2 \right) \norm*{\nabla_y f(x_{t_0 - 1}, y_{t_0 - 1})}^2  + 2 \norm*{y_{t_0}^* - y_{t_0 - 1}^*}^2 \\
    &\leq 4 \left(\frac{1}{\mu^2} + \rg_{0}^2 \right) v^y_{t_0} + 2 \norm*{y_{t_0}^* - y_{t_0 - 1}^*}^2 \\
    &\leq 4 \left(\frac{1}{\mu^2} + \frac{\eta^y}{\left(v^y_{t_0}\right)^{\beta}} \right) c_2^{1/\beta} + 2 \norm*{y_{t_0}^* - y_{t_0 - 1}^*}^2 \\
    &\leq 4 \left(\frac{1}{\mu^2} + \frac{\eta^y}{\left(v^y_{t_0}\right)^{\beta}} \right) c_2^{1/\beta} + 2 \rk^2 \norm*{x_{t_0} - x_{t_0 - 1}}^2 \\
    &\leq 4 \left(\frac{1}{\mu^2} + \frac{\eta^y}{\left(v^y_{t_0}\right)^{\beta}} \right) c_2^{1/\beta} + 2 \rk^2 \eta_{t_0 - 1}^2 \norm*{\nabla_x f(x_{t_0 - 1}, y_{t_0 - 1})}^2 \\
    &\leq 4 \left(\frac{1}{\mu^2} + \frac{\eta^y}{\left(v^y_{t_0}\right)^{\beta}} \right) c_2^{1/\beta} + \frac{2 \rk^2 \left(v^y_{t+1}\right)^{\beta}}{\left(v^y_{0}\right)^{\beta}} \eta_{t_0 - 1}^2 \norm*{\nabla_x f(x_{t_0 - 1}, y_{t_0 - 1})}^2 \\
    &\leq 4 \left(\frac{1}{\mu^2} + \frac{\eta^y}{\left(v^y_{0}\right)^{\beta}} \right) c_2^{1/\beta} + \frac{2 \rk^2 \left(v^y_{t+1}\right)^{\beta}}{\left(v^y_{0}\right)^{\beta}} \eta_{t_0 - 1}^2 \norm*{\nabla_x f(x_{t_0 - 1}, y_{t_0 - 1})}^2.
  \end{align*}
  Combined with \Cref{eq:pe3}, we have
  \begin{align*}
    &\fakeeq \sum_{t=t_0}^{T-1} \frac{\eta^y }{\left(v^y_{t+1}\right)^{\beta}} \norm*{\nabla_y f(x_t, y_t)}^2 \\
    &\leq \underbrace{ 4 (\mu + l) \left(\frac{1}{\mu^2} + \frac{\eta^y}{\left(v^y_{0}\right)^{\beta}} \right) c_2^{1/\beta} }_{c_3}
    + \underbrace{ (\mu + l)\left( \frac{2\rk^2}{\left(v^y_{0}\right)^{\alpha}} + \frac{(\mu + l) \rk^2 }{\eta^y \mu l} \right) }_{c_4} \sum_{t=t_0 - 1}^{T-1}  \left(v^y_{t+1}\right)^{\beta}\eta_t^2 \norm*{\nabla_x f(x_t, y_t)}^2.
  \end{align*}
  By adding terms from $0$ to $t_0 - 1$ and $\frac{\eta^y v^y_0}{\left(v_0^y\right)^{\beta}}$ from both sides,
  and use \Cref{lemma:sum}, we have
  \begin{align*}
    &\fakeeq \frac{\eta^y v^y_0}{\left(v_0^y\right)^{\beta}} + \sum_{t=0}^{T-1} \frac{\eta^y }{\left(v^y_{t+1}\right)^{\beta}} \norm*{\nabla_y f(x_t, y_t)}^2 \\
    &\leq c_3 + \frac{\eta^y v^y_0}{\left(v_0^y\right)^{\beta}} + c_4 \sum_{t=0}^{T-1} \left(v^y_{t+1}\right)^{\beta}\eta_t^2 \norm*{\nabla_x f(x_t, y_t)}^2
    + \sum_{t=0}^{t_0-1} \frac{\eta^y }{\left(v^y_{t+1}\right)^{\beta}} \norm*{\nabla_y f(x_t, y_t)}^2 \\
    &\leq c_3 + \frac{\eta^y v^y_0}{\left(v_0^y\right)^{\beta}} + c_4 \sum_{t=0}^{T-1} \left(v^y_{t+1}\right)^{\beta}\eta_t^2 \norm*{\nabla_x f(x_t, y_t)}^2
    + \frac{\eta^y}{1-\beta} v_{t_0}^{1-\beta} \\
    &\leq c_3 + \frac{\eta^y v^y_0}{\left(v_0^y\right)^{\beta}} + c_4 \sum_{t=0}^{T-1} \left(v^y_{t+1}\right)^{\beta}\eta_t^2 \norm*{\nabla_x f(x_t, y_t)}^2
    + \frac{\eta^y c_2^{\frac{1-\beta}{\beta}}}{1-\beta} \\
    &=  c_3 + \frac{\eta^y v^y_0}{\left(v_0^y\right)^{\beta}} + \frac{\eta^y c_2^{\frac{1-\beta}{\beta}}}{1-\beta}  + c_4 \left(\eta^x\right)^2
    \sum_{t=0}^{T-1} \frac{\left(v^y_{t+1}\right)^{\beta}}{\max\left\{v^x_{t+1}, v^y_{t+1}\right\}^{2\alpha}} \norm*{\nabla_x f(x_t, y_t)}^2 \\
    &= \underbrace{ c_3 + \frac{\eta^y v^y_0}{\left(v_0^y\right)^{\beta}} + \frac{\eta^y c_2^{\frac{1-\beta}{\beta}}}{1-\beta} }_{c_5} + c_4 \left(\eta^x\right)^2
    \sum_{t=0}^{T-1} \frac{1}{\left(v^x_{t+1}\right)^{2\alpha - \beta}} \norm*{\nabla_x f(x_t, y_t)}^2 \\
      &\leq c_5 + c_4 \left(\eta^x\right)^2 \left( \frac{1 + \log v_T^x - \log v_0^x}{\left(v_{0}^x\right)^{2\alpha - \beta - 1}} \cdot \mathbf{1}_{2\alpha -\beta \geq 1}
      + \frac{\left(v_{T}^x\right)^{1-2\alpha + \beta} }{1-2\alpha + \beta} \cdot \mathbf{1}_{2\alpha -\beta < 1} \right).
  \end{align*}
  The LHS can be bounded by $\left(v^y_{T}\right)^{1-\beta}$ by \Cref{lemma:sum}.
  Then we get two useful inequalities from above:
  \begin{align*}
    \begin{cases}
      \sum_{t=0}^{T-1} \rg_t \norm*{\nabla_y f(x_t, y_t)}^2 
        \leq c_5 + c_4 \left(\eta^x\right)^2 \left( \frac{1 + \log v_T^x - \log v_0^x}{\left(v_{0}^x\right)^{2\alpha - \beta - 1}} \cdot \mathbf{1}_{2\alpha -\beta \geq 1}
        + \frac{\left(v_{T}^x\right)^{1-2\alpha + \beta} }{1-2\alpha + \beta} \cdot \mathbf{1}_{2\alpha -\beta < 1} \right) \\
      v^y_{T} \leq \left( c_5 + c_4 \left(\eta^x\right)^2 \left( \frac{1 + \log v_T^x - \log v_0^x}{\left(v_{0}^x\right)^{2\alpha - \beta - 1}} \cdot \mathbf{1}_{2\alpha -\beta \geq 1}
      + \frac{\left(v_{T}^x\right)^{1-2\alpha + \beta} }{1-2\alpha + \beta} \cdot \mathbf{1}_{2\alpha -\beta < 1} \right) \right)^{\frac{1}{1-\beta}}. \numberthis \label{eq:mm2}
    \end{cases}
  \end{align*}
  Now bring it back to \Cref{eq:mm1}, we get
  \begin{align*}
    &\fakeeq \sum_{t=0}^{T-1} \eta_t \norm*{\nabla_x f(x_t, y_t)}^2 \\
    &\leq 2\Delta \Phi + 2\rk l \eta^x \left( \frac{1 + \log v_T^x - \log v_0^x}{\left(v_{0}^x\right)^{2\alpha-1}} \cdot \mathbf{1}_{2\alpha \geq 1} 
    +  \frac{\left(v_{T}^x\right)^{1-2\alpha}}{1-2\alpha} \cdot \mathbf{1}_{2\alpha < 1} \right) \\
    &\fakeeq + c_1 c_5 + c_1 c_4 \left(\eta^x\right)^2 \left( \frac{1 + \log v_T^x - \log v_0^x}{\left(v_{0}^x\right)^{2\alpha - \beta - 1}} \cdot \mathbf{1}_{2\alpha -\beta \geq 1}
      + \frac{\left(v_{T}^x\right)^{1-2\alpha + \beta} }{1-2\alpha + \beta} \cdot \mathbf{1}_{2\alpha -\beta < 1} \right).
  \end{align*}
  For the LHS, we have
  \begin{align*}
    \sum_{t=0}^{T-1} \eta_t \norm*{\nabla_x f(x_t, y_t)}^2
    &= \sum_{t=0}^{T-1} \frac{\eta^x}{\max\left\{v^x_{t+1}, v^y_{t+1}\right\}^{\alpha}} \norm*{\nabla_x f(x_t, y_t)}^2 \\
    &\geq \frac{\eta^x}{\max\left\{v^x_{T}, v^y_{T}\right\}^{\alpha}} \sum_{t=0}^{T-1}  \norm*{\nabla_x f(x_t, y_t)}^2
  \end{align*}
 From here, by combining two inequalites above and noting that $\sum_{t=0}^{T-1}  \norm*{\nabla_x f(x_t, y_t)}^2 \leq v_T^x$, we can already conclude that $\sum_{t=0}^{T-1}  \norm*{\nabla_x f(x_t, y_t)}^2 = \cO(1)$. Now we will provide an explicit bound. We consider two cases: 
  \paragraph{(1)} If $v^y_{T} \leq v^x_{T}$, then
  \begin{align*}
    &\fakeeq \sum_{t=0}^{T-1} \norm*{\nabla_x f(x_t, y_t)}^2  \\
    &\leq \frac{2\Delta \Phi \left(v_{T}^x\right)^{\alpha}}{\eta^x} + 2\rk l \left( \frac{\left(v_{T}^x\right)^{\alpha} \left(1 + \log v_T^x - \log v_0^x\right)}{\left(v_{0}^x\right)^{2\alpha-1} } \cdot \mathbf{1}_{2\alpha \geq 1} 
    +  \frac{\left(v_{T}^x\right)^{1-\alpha}}{1-2\alpha} \cdot \mathbf{1}_{2\alpha < 1} \right) \\
    &\fakeeq + \frac{c_1 c_5\left(v_{T}^x\right)^{\alpha}}{\eta^x} + c_1 c_4 \eta^x \left( \frac{\left(v_{T}^x\right)^{\alpha} \left(1 + \log v_T^x - \log v_0^x\right)}{\left(v_{0}^x\right)^{2\alpha - \beta - 1}} \cdot \mathbf{1}_{2\alpha -\beta \geq 1}
      + \frac{\left(v_{T}^x\right)^{1-\alpha + \beta} }{1-2\alpha + \beta} \cdot \mathbf{1}_{2\alpha -\beta < 1} \right) \\
    &= \frac{2\Delta \Phi \left(v_{T}^x\right)^{\alpha}}{\eta^x} + 2\rk l \left( \frac{\left(v_{T}^x\right)^{\alpha} \left(v_{T}^x\right)^{\frac{1-\alpha}{2}} \left(v_{T}^x\right)^{\frac{\alpha - 1}{2}} \left(1 + \log v_T^x - \log v_0^x\right) }{\left(v_{0}^x\right)^{2\alpha-1} } \cdot \mathbf{1}_{2\alpha \geq 1} 
    +  \frac{\left(v_{T}^x\right)^{1-\alpha}}{1-2\alpha} \cdot \mathbf{1}_{2\alpha < 1} \right) \\
    &\fakeeq + \frac{c_1 c_5\left(v_{T}^x\right)^{\alpha}}{\eta^x} + c_1 c_4 \eta^x \left( \frac{\left(v_{T}^x\right)^{\alpha} \left(v_{T}^x\right)^{\frac{1-\alpha}{2}} \left(v_{T}^x\right)^{\frac{\alpha - 1}{2}} \left(1 + \log v_T^x - \log v_0^x\right)}{\left(v_{0}^x\right)^{2\alpha - \beta - 1}} \cdot \mathbf{1}_{2\alpha -\beta \geq 1}
      + \frac{\left(v_{T}^x\right)^{1-\alpha + \beta} }{1-2\alpha + \beta} \cdot \mathbf{1}_{2\alpha -\beta < 1} \right) \\
    &\leq \frac{2\Delta \Phi \left(v_{T}^x\right)^{\alpha}}{\eta^x} + 2\rk l \left( \frac{ 2 e^{(1-\alpha)(1-\log v_0^x)/2} \left(v_{T}^x\right)^{\frac{1+\alpha}{2}} }{e (1 - \alpha) \left(v_{0}^x\right)^{2\alpha-1} } \cdot \mathbf{1}_{2\alpha \geq 1} 
    +  \frac{\left(v_{T}^x\right)^{1-\alpha}}{1-2\alpha} \cdot \mathbf{1}_{2\alpha < 1} \right) \\
    &\fakeeq + \frac{c_1 c_5\left(v_{T}^x\right)^{\alpha}}{\eta^x} + c_1 c_4 \eta^x \left( \frac{ 2 e^{(1-\alpha)(1-\log v_0^x)/2} \left(v_{T}^x\right)^{\frac{1+\alpha}{2}} }{e(1- \alpha) \left(v_{0}^x\right)^{2\alpha - \beta - 1}} \cdot \mathbf{1}_{2\alpha -\beta \geq 1}
      + \frac{\left(v_{T}^x\right)^{1-\alpha + \beta} }{1-2\alpha + \beta} \cdot \mathbf{1}_{2\alpha -\beta < 1} \right),
      \numberthis \label{eq:new_1}
  \end{align*}
  where we used $x^{-m} (c+\log x) \leq \frac{e^{cm}}{em}$ for $x>0$, $m > 0$ and $c \in \mathbb{R}$ in the last inequality.
  Also, if 
  $0 < \alpha_i < 1$ and
  $b_i$ are positive constants, and $x \leq \sum_{i=1}^n b_i x^{\alpha_i}$,
  then we get $x \leq n\sum_{i=1}^n b_i^{1/(1-\alpha_i)}$.
  Now consider $v_{T}^x$ as the $x$ in the previous statement, and
  note that the LHS of \Cref{eq:new_1} equals to $v_{T}^x - v_0^x$. Then we can get 
  \begin{align*}
    v_{T}^x
    &\leq 5 v_0^x + 5 \left(\frac{2\Delta \Phi}{\eta^x}\right)^{\frac{1}{1-\alpha}} + 5\left( \frac{4\rk l e^{(1-\alpha)(1-\log v_0^x)/2}}{e(1-\alpha) \left(v_{0}^x\right)^{2\alpha-1} } \right)^{\frac{2}{1-\alpha}} \cdot \mathbf{1}_{2\alpha \geq 1} 
    +  5 \left(\frac{2\rk l}{1-2\alpha}\right)^{\frac{1}{\alpha}} \cdot \mathbf{1}_{2\alpha < 1}  \\
    &\fakeeq + 5 \left(\frac{c_1 c_5}{\eta^x}\right)^{\frac{1}{1-\alpha}} + 5 \left(\frac{2 c_1 c_4 \eta^x e^{(1-\alpha)(1-\log v_0^x)/2}}{e(1-\alpha) \left(v_{0}^x\right)^{2\alpha - \beta - 1}}\right)^{\frac{2}{1-\alpha}} \cdot \mathbf{1}_{2\alpha -\beta \geq 1}
    + 5 \left(\frac{c_1 c_4 \eta^x}{1-2\alpha + \beta}\right)^{\frac{1}{\alpha - \beta}} \cdot \mathbf{1}_{2\alpha -\beta < 1}.
    \numberthis \label{eq:new_2}
  \end{align*}
  Note that the RHS is a constant and also an upper bound for $\sum_{t=0}^{T-1} \norm*{\nabla_x f(x_t, y_t)}^2$.
  \paragraph{(2)} If $v^y_{T} \leq v^x_{T}$, then we can use the upper bound for $v^y_{T}$ from \Cref{eq:mm2}.
  We now discuss two cases:
  \begin{enumerate}[wide, labelindent=0pt]
    \item $2 \alpha < 1 + \beta$. Then we have
      \begin{align*}
  &\fakeeq \sum_{t=0}^{T-1} \norm*{\nabla_x f(x_t, y_t)}^2 \\
  &\leq \left( \frac{2\Delta \Phi + c_1 c_5 }{\eta^x} + 2\rk l \left( \frac{1 + \log v_T^x - \log v_0^x}{\left(v_{0}^x\right)^{2\alpha-1}} \cdot \mathbf{1}_{2\alpha \geq 1} 
    +  \frac{\left(v_{T}^x\right)^{1-2\alpha}}{1-2\alpha} \cdot \mathbf{1}_{2\alpha < 1} \right)
    + \frac{c_1 c_4 \eta^x \left(v_{T}^x\right)^{1-2\alpha + \beta} }{1-2\alpha + \beta} \right) \\
  &\fakeeq \left( c_5 + \frac{c_4 \left(\eta^x\right)^2 \left(v_{T}^x\right)^{1-2\alpha + \beta} }{1-2\alpha + \beta} \right)^{\frac{\alpha}{1-\beta}} \\
  &\leq  \left( \frac{2\Delta \Phi + c_1 c_5 }{\eta^x \left(v_{0}^x\right)^{1-2\alpha+\beta}}
    + 2\rk l \left( \frac{1 + \log v_T^x - \log v_0^x}{\left(v_{0}^x\right)^{2\alpha-1} \left(v_{T}^x\right)^{1-2\alpha+\beta}} \cdot \mathbf{1}_{2\alpha \geq 1} 
    +  \frac{1}{(1-2\alpha)\left(v_{0}^x\right)^{\beta}} \cdot \mathbf{1}_{2\alpha < 1} \right)
    + \frac{c_1 c_4 \eta^x }{1-2\alpha + \beta} \right) \\
  &\fakeeq \left( \frac{c_5}{\left(v_{0}^x\right)^{1-2\alpha+\beta}} + \frac{c_4 \left(\eta^x\right)^2 }{1-2\alpha + \beta} \right)^{\frac{\alpha}{1-\beta}}
  \cdot \left(v_{T}^x\right)^{1-2\alpha+\beta+\frac{(1-2\alpha+\beta)\alpha}{1-\beta}} \\
  &\leq  \left( \frac{2\Delta \Phi + c_1 c_5 }{\eta^x \left(v_{0}^x\right)^{1-2\alpha+\beta}}
    + 2\rk l \left( \frac{e^{(1-2\alpha+\beta)(1-\log v_0^x)}}{ e (1-2\alpha + \beta) \left(v_{0}^x\right)^{2\alpha-1} } \cdot \mathbf{1}_{2\alpha \geq 1} 
    +  \frac{1}{(1-2\alpha)\left(v_{0}^x\right)^{\beta}} \cdot \mathbf{1}_{2\alpha < 1} \right)
    + \frac{c_1 c_4 \eta^x }{1-2\alpha + \beta} \right) \\
  &\fakeeq \left( \frac{c_5}{\left(v_{0}^x\right)^{1-2\alpha+\beta}} + \frac{c_4 \left(\eta^x\right)^2 }{1-2\alpha + \beta} \right)^{\frac{\alpha}{1-\beta}}
  \cdot \left(v_{T}^x\right)^{1-2\alpha+\beta+\frac{(1-2\alpha+\beta)\alpha}{1-\beta}},
      \end{align*}
  Note that since $\alpha > \beta$, we have
  \begin{align*}
    1-2\alpha+\beta+\frac{(1-2\alpha+\beta)\alpha}{1-\beta}
    \leq \frac{(1 - \alpha) \alpha}{1-\beta} + 1-\alpha
    = 1 + \frac{\alpha (\beta - \alpha)}{1-\beta} < 1.
  \end{align*}
  Therefore, with the same reasoning as \Cref{eq:new_2},
      \begin{align*}
  &\fakeeq \sum_{t=0}^{T-1} \norm*{\nabla_x f(x_t, y_t)}^2 \leq v_{T}^x \\
  &\leq 2 \Bigg[  \left( \frac{2\Delta \Phi + c_1 c_5 }{\eta^x \left(v_{0}^x\right)^{1-2\alpha+\beta}}
    + \frac{c_1 c_4 \eta^x }{1-2\alpha + \beta}
    +  \frac{2\rk l e^{(1-2\alpha+\beta)(1-\log v_0^x)}}{ e (1-2\alpha + \beta) \left(v_{0}^x\right)^{2\alpha-1} } \cdot \mathbf{1}_{2\alpha \geq 1} 
    +  \frac{2\rk l}{(1-2\alpha)\left(v_{0}^x\right)^{\beta}} \cdot \mathbf{1}_{2\alpha < 1} 
     \right) \\
  &\fakeeq \left( \frac{c_5}{\left(v_{0}^x\right)^{1-2\alpha+\beta}} + \frac{c_4 \left(\eta^x\right)^2 }{1-2\alpha + \beta} \right)^{\frac{\alpha}{1-\beta}} \Bigg]^{\frac{1}{1 - (1-2\alpha+\beta) \left(1 +\frac{\alpha}{1-\beta}\right)}}
  + 2 v_0^x,
      \end{align*}
  which gives us constant RHS.
\item $2\alpha \geq 1 + \beta$. Then we have
  \begin{align*}
  &\fakeeq \sum_{t=0}^{T-1} \norm*{\nabla_x f(x_t, y_t)}^2 \\
  &\leq \left( \frac{2\Delta \Phi + c_1 c_5}{\eta^x} 
  + \frac{2\rk l \left(1 + \log v_T^x - \log v_0^x\right)}{\left(v_{0}^x\right)^{2\alpha-1}} 
   +  \frac{c_1 c_4 \eta^x \left(1 + \log v_T^x - \log v_0^x\right)}{\left(v_{0}^x\right)^{2\alpha - \beta - 1}} \right) \\
  &\fakeeq \left( c_5 +  \frac{c_4 \left(\eta^x\right)^2 \left(1 + \log v_T^x - \log v_0^x\right)}{\left(v_{0}^x\right)^{2\alpha - \beta - 1}}
  \right)^{\frac{\alpha}{1-\beta}} \\
  &\leq \left( \frac{2\Delta \Phi + c_1 c_5}{\eta^x \left(v_{0}^x\right)^{1/4}} 
  + \frac{2\rk l \left(1 + \log v_T^x - \log v_0^x\right)}{\left(v_{0}^x\right)^{2\alpha-1} \left(v_{T}^x\right)^{1/4}} 
   +  \frac{c_1 c_4 \eta^x \left(1 + \log v_T^x - \log v_0^x\right)}{\left(v_{0}^x\right)^{2\alpha - \beta - 1} \left(v_{T}^x\right)^{1/4}} \right) \\
  &\fakeeq \left( \frac{c_5}{\left(v_{0}^x\right)^{\frac{(1-\beta)}{4\alpha}}} 
    +  \frac{c_4 \left(\eta^x\right)^2 \left(1 + \log v_T^x - \log v_0^x\right)}{\left(v_{0}^x\right)^{2\alpha - \beta - 1} \left(v_{T}^x\right)^{\frac{(1-\beta)}{4\alpha}}}
  \right)^{\frac{\alpha}{1-\beta}}
  \cdot \left(v_{T}^x\right)^{1/2} \\
  &\leq \left( \frac{2\Delta \Phi + c_1 c_5}{\eta^x \left(v_{0}^x\right)^{1/4}} 
    + \frac{8\rk l e^{(1-\log v_0^x)/4}}{e \left(v_{0}^x\right)^{2\alpha-1} } 
  +  \frac{4 c_1 c_4 \eta^x e^{(1-\log v_0^x)/4}}{e \left(v_{0}^x\right)^{2\alpha - \beta - 1} } \right) \\
  &\fakeeq \left( \frac{c_5}{\left(v_{0}^x\right)^{\frac{(1-\beta)}{4\alpha}}} 
    +  \frac{4 c_4 \alpha \left(\eta^x\right)^2 e^{(1-\beta)(1-\log v_0^x)/(4\alpha)}}{e (1-\beta) \left(v_{0}^x\right)^{2\alpha - \beta - 1} }
  \right)^{\frac{\alpha}{1-\beta}}
  \cdot \left(v_{T}^x\right)^{1/2},
  \end{align*}
  which implies
  \begin{align*}
  &\fakeeq \sum_{t=0}^{T-1} \norm*{\nabla_x f(x_t, y_t)}^2 \leq v_{T}^x \\
  &\leq 2\Bigg[ \left( \frac{2\Delta \Phi + c_1 c_5}{\eta^x \left(v_{0}^x\right)^{1/4}} 
  + \frac{8\rk l e^{(1-\log v_0^x)/4}}{e \left(v_{0}^x\right)^{2\alpha-1} } 
   +  \frac{4 c_1 c_4 \eta^x e^{(1-\log v_0^x)/4}}{e \left(v_{0}^x\right)^{2\alpha - \beta - 1} } \right) \\
  &\fakeeq \left( \frac{c_5}{\left(v_{0}^x\right)^{\frac{(1-\beta)}{4\alpha}}} 
    +  \frac{4 c_4 \alpha \left(\eta^x\right)^2 e^{(1-\beta)(1-\log v_0^x)/(4\alpha)}}{e (1-\beta) \left(v_{0}^x\right)^{2\alpha - \beta - 1} }
\right)^{\frac{\alpha}{1-\beta}} \Bigg]^2 + 2 v_{0}^x.
  \end{align*}
  Now we also get only a constant on the RHS.
  \end{enumerate}

  Summarizing all the cases, we finish the proof.

\end{proof}

\subsection{Intermediate Lemmas for Theorem~\ref{theorem:tiada_stoc}}

\begin{lemma} \label{lemma:basic_strongly_convex_expand}
  Under the same setting as \Cref{theorem:tiada_stoc},
  if for $t=t_0$ to $t_1-1$ and any $\lambda_t > 0$, $S_t$,
  \[
  \norm*{y_{t+1} - y^*_{t+1}}^2 
  \leq \left(1 + \lambda_t \right) \norm*{y_{t+1} - y^*_{t}}^2 + S_t,
  \]
  then we have
  \begin{align*}
  \Ep{\sum_{t=t_0}^{t_1-1} \left( f(x_{t}, y^*_{t}) - f(x_{t}, y_t) \right)}
  &\leq \Ep{\sum_{t=t_0 + 1}^{t_1-1} \left( \frac{1 - \rg_t \mu}{2 \rg_t} \norm*{y_t - y^*_{t}}^2 - \frac{1}{2 \rg_t (1 + \lambda_t)} \norm*{y_{t+1} - y^*_{t+1}}^2 \right)} \\
  &\fakeeq + \Ep{\sum_{t=t_0}^{t_1-1} \frac{\rg_t}{2} \norm*{\nabla_y \widetilde{f}(x_{t}, y_t)}^2}
  + \Ep{\sum_{t=t_0}^{t_1-1} \frac{S_t}{2\rg_t (1 + \lambda_t)}}.
  \end{align*}
\end{lemma}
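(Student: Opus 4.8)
The plan is to establish a single per-iteration inequality and then sum it over $t=t_0,\dots,t_1-1$. The starting point for each step is the $\mu$-strong concavity of $f(x_t,\cdot)$ (\Cref{assume:strong-convex}) applied to the pair $(y^*_t,y_t)$, which gives
\[
  f(x_t,y^*_t)-f(x_t,y_t)\leq \inp*{\nabla_y f(x_t,y_t)}{y^*_t-y_t}-\frac{\mu}{2}\norm*{y_t-y^*_t}^2 .
\]
The whole task is then to control the inner-product term using the dual update $y_{t+1}=\cP_{\cY}\!\left(y_t+\gamma_t\nabla_y\widetilde{f}(x_t,y_t)\right)$.

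First I would use nonexpansiveness of the projection together with $y^*_t\in\cY$ (guaranteed by \Cref{assume:interior_optimal}, so $\cP_{\cY}(y^*_t)=y^*_t$) to obtain
\[
  \norm*{y_{t+1}-y^*_t}^2\leq \norm*{y_t-y^*_t}^2+2\gamma_t\inp*{\nabla_y\widetilde{f}(x_t,y_t)}{y_t-y^*_t}+\gamma_t^2\norm*{\nabla_y\widetilde{f}(x_t,y_t)}^2 ,
\]
which rearranges into the standard projected-ascent bound for $\inp*{\nabla_y\widetilde{f}(x_t,y_t)}{y^*_t-y_t}$ by $\frac{1}{2\gamma_t}\bigl(\norm*{y_t-y^*_t}^2-\norm*{y_{t+1}-y^*_t}^2\bigr)+\frac{\gamma_t}{2}\norm*{\nabla_y\widetilde{f}(x_t,y_t)}^2$. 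I would then feed in the lemma's hypothesis $\norm*{y_{t+1}-y^*_{t+1}}^2\leq(1+\lambda_t)\norm*{y_{t+1}-y^*_t}^2+S_t$, which lower-bounds $\norm*{y_{t+1}-y^*_t}^2$ and converts the negative term $-\frac{1}{2\gamma_t}\norm*{y_{t+1}-y^*_t}^2$ into $-\frac{1}{2\gamma_t(1+\lambda_t)}\norm*{y_{t+1}-y^*_{t+1}}^2+\frac{S_t}{2\gamma_t(1+\lambda_t)}$, producing exactly the $y_{t+1}$-quadratic and the $S_t$ terms in the claim.

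The main obstacle, and the only delicate point, is that the stepsize $\gamma_t=\eta^y/(v^y_{t+1})^{\beta}$ depends on the current stochastic gradient $\nabla_y\widetilde{f}(x_t,y_t)$, so it is \emph{not} measurable with respect to the past and cannot be pulled through an expectation as a constant; moreover the strong-concavity bound carries the true gradient $\nabla_y f$ whereas the descent lemma produces the stochastic gradient. I would reconcile both issues by working under the conditional expectation $\mathbb{E}_t[\cdot]$ given the history $\mathcal{F}_t$ (everything up to $x_t,y_t$, before sampling $\xi^y_t$). Since $f(x_t,y^*_t)-f(x_t,y_t)$ and $y_t,y^*_t$ are $\mathcal{F}_t$-measurable and $\mathbb{E}_t[\nabla_y\widetilde{f}(x_t,y_t)]=\nabla_y f(x_t,y_t)$ (\Cref{assume:stochastic_grad}), the inner product above equals $\mathbb{E}_t[\inp*{\nabla_y\widetilde{f}(x_t,y_t)}{y^*_t-y_t}]$; I then bound this conditional expectation by the \emph{pointwise} descent-lemma bound, deliberately keeping the entangled $\gamma_t$ and $\nabla_y\widetilde{f}$ together inside $\mathbb{E}_t$ rather than trying to decouple them. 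The deterministic term $-\frac{\mu}{2}\norm*{y_t-y^*_t}^2$ can be pushed inside $\mathbb{E}_t$ (it equals its own conditional expectation) and merged with $\mathbb{E}_t\bigl[\frac{1}{2\gamma_t}\norm*{y_t-y^*_t}^2\bigr]$ to yield precisely the coefficient $\frac{1-\gamma_t\mu}{2\gamma_t}$ on $\norm*{y_t-y^*_t}^2$; this is the key step that legitimizes carrying the data-dependent $\gamma_t$ through to the final expectation, as the statement demands.

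Finally I would take total expectation via the tower property (the left-hand side being $\mathcal{F}_t$-measurable) and sum over $t$, which splits the right-hand side into the three stated groups. I do not expect the projection/strong-concavity algebra or the summation itself to be hard; the only remaining care is index bookkeeping, since the natural summation runs from $t_0$, and the boundary contribution at $t=t_0$ (the negative term $-\frac{1}{2\gamma_{t_0}(1+\lambda_{t_0})}\norm*{y_{t_0+1}-y^*_{t_0+1}}^2$, which may simply be discarded, and the term $\frac{1-\gamma_{t_0}\mu}{2\gamma_{t_0}}\norm*{y_{t_0}-y^*_{t_0}}^2$) must be absorbed so that the first sum is stated over $t_0+1\leq t\leq t_1-1$ while the $S_t$ and stochastic-gradient sums run over $t_0\leq t\leq t_1-1$.
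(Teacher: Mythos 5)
Your proposal is correct and takes essentially the same route as the paper's proof: expand $\|y_{t+1}-y^*_t\|^2$ through nonexpansiveness of the projection, feed in the hypothesis to turn $-\|y_{t+1}-y^*_t\|^2$ into the $-\frac{1}{1+\lambda_t}\|y_{t+1}-y^*_{t+1}\|^2$ and $S_t$ terms, divide by $\gamma_t(1+\lambda_t)$, telescope, and pass to expectations via conditioning on the history together with strong concavity — your reorganization (strong concavity invoked up front, explicit tower-property bookkeeping for the data-dependent $\gamma_t$) is cosmetic, and is in fact a cleaner treatment of the measurability point the paper leaves implicit. Your closing worry about "absorbing" the boundary term $\frac{1-\gamma_{t_0}\mu}{2\gamma_{t_0}}\|y_{t_0}-y^*_{t_0}\|^2$ identifies what is actually an index typo in the paper: its own telescoping runs from $t=t_0$ yet silently writes the first right-hand sum from $t_0+1$ (a positive term cannot be discarded), and all downstream applications of the lemma use the sum starting at $t_0$, which is exactly what both your derivation and the paper's truly yield.
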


\begin{proof}
Letting $\lambda_t := \frac{\mu \eta^y}{2 \left(v^y_{t+1}\right)^{\beta}}$, we have
\begin{align*}
  &\fakeeq \norm*{y_{t+1} - y^*_{t+1}}^2 \\
  &\leq (1 + \lambda_t) \norm*{y_{t+1} - y^*_{t}}^2 + S_t \\
  &= (1 + \lambda_t) \norm*{\cP_{\cY}\left(y_t + \rg_t \nabla_y \widetilde{f}(x_{t}, y_t)\right) - y^*_{t}}^2 + S_t \\
  &\leq (1 + \lambda_t) \norm*{y_t + \rg_t \nabla_y \widetilde{f}(x_{t}, y_t) - y^*_{t}}^2 + S_t \\
  &= (1 + \lambda_t) \left(\norm*{y_t - y^*_{t}}^2 + \rg_t^2 \norm*{\nabla_y \widetilde{f}(x_{t}, y_t)}^2 + 2\rg_t \inp*{\nabla_y \widetilde{f}(x_{t}, y_t)}{y_t - y^*_{t}}\right) + S_t \\
  &= (1 + \lambda_t) \bigg(\norm*{y_t - y^*_{t}}^2 + \rg_t^2 \norm*{\nabla_y \widetilde{f}(x_{t}, y_t)}^2 + 2\rg_t \inp*{\nabla_y \widetilde{f}(x_{t}, y_t)}{y_t - y^*_{t}} \\
  &\fakeeq  + \rg_t \mu \norm*{y_t - y^*_{t}}^2 - \rg_t \mu \norm*{y_t - y^*_{t}}^2 \bigg) + S_t \\ 
\end{align*}
By multiplying $\frac{1}{\rg_t (1 + \lambda_t)}$ and rearranging the terms, we can get
\begin{align*}
  &\fakeeq 2 \inp*{\nabla_y \widetilde{f}(x_{t}, y_t)}{y^*_{t} - y_t} - \mu \norm*{y_t - y^*_{t}}^2 \\
  &\leq \frac{1 - \rg_t \mu}{\rg_t} \norm*{y_t - y^*_{t}}^2 - \frac{1}{\rg_t (1 + \lambda_t)} \norm*{y_{t+1} - y^*_{t+1}}^2
  + \rg_t \norm*{\nabla_y \widetilde{f}(x_{t}, y_t)}^2
  + \frac{S_t}{\rg_t (1 + \lambda_t)}.
\end{align*}
By telescoping from $t=t_0$ to $t_1-1$, we have
\begin{align*}
  &\fakeeq \sum_{t=t_0}^{t_1-1} \left(\inp*{\nabla_y \widetilde{f}(x_{t}, y_t)}{y^*_{t} - y_t} - \frac{\mu}{2} \norm*{y_t - y^*_{t}}^2 \right) \\
  &\leq \sum_{t=t_0 + 1}^{t_1-1} \left( \frac{1 - \rg_t \mu}{2 \rg_t} \norm*{y_t - y^*_{t}}^2 - \frac{1}{2 \rg_t (1 + \lambda_t)} \norm*{y_{t+1} - y^*_{t+1}}^2 \right)
  + \sum_{t=t_0}^{t_1-1} \frac{\rg_t}{2} \norm*{\nabla_y \widetilde{f}(x_{t}, y_t)}^2 \\
  &\fakeeq + \sum_{t=t_0}^{t_1-1} \frac{S_t}{2\rg_t (1 + \lambda_t)}.
\end{align*}
Now we take the expectation and get
\begin{align*}
  \Ep{\text{LHS}}
  &\geq \Ep{ \sum_{t=t_0}^{t_1-1} \Ep[\xi^y_t]{\left(\inp*{\nabla_y \widetilde{f}(x_{t}, y_t)}{y^*_{t} - y_t} - \frac{\mu}{2} \norm*{y_t - y^*_{t}}^2 \right)} } \\
  &= \Ep{ \sum_{t=t_0}^{t_1-1} \left(\inp*{\nabla_y f(x_{t}, y_t)}{y^*_{t} - y_t} - \frac{\mu}{2} \norm*{y_t - y^*_{t}}^2 \right)} \\
  &\geq \Ep{\sum_{t=t_0}^{t_1-1} \left( f(x_{t}, y^*_{t}) - f(x_{t}, y_t) \right)},
\end{align*}
where we used strong-concavity in the last inequality.

\end{proof}

\begin{lemma} \label{lemma:v_small}
  Under the same setting as \Cref{theorem:tiada_stoc},
  if $v^y_{t+1} \leq C$ for $t=0, ..., t_0 - 1$, then we have
\begin{align*}
&\fakeeq \Ep{\sum_{t=0}^{t_0-1} \left( f(x_{t}, y^*_{t}) - f(x_{t}, y_t) \right)} \\
&\leq \Ep{\sum_{t=0}^{t_0-1} \left( \frac{1 - \rg_t \mu}{2 \rg_t} \norm*{y_t - y^*_{t}}^2 - \frac{1}{\rg_t (2 + \mu \rg_t)} \norm*{y_{t+1} - y^*_{t+1}}^2 \right)}
+ \Ep{\sum_{t=0}^{t_0-1} \frac{\rg_t}{2} \norm*{\nabla_y \widetilde{f}(x_{t}, y_t)}^2} \\
&\fakeeq + \frac{\rk^2 \left(\mu \eta^y C^{\beta} + 2 C^{2\beta}\right) \left(\eta^x\right)^2}{2 \mu \left(\eta^y\right)^2 } 
 \Ep{  \frac{1 + \log v^x_{t_0} - \log v^x_{0}}{\left(v^x_{0}\right)^{2\alpha-1}} \cdot \mathbf{1}_{\alpha \geq 0.5} 
  +  \frac{\left(v_{t_0}^x\right)^{1-2\alpha}}{1 - 2\alpha} \cdot \mathbf{1}_{\alpha < 0.5}   }.
\end{align*}
\end{lemma}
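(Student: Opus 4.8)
The plan is to instantiate the abstract recursion of \Cref{lemma:basic_strongly_convex_expand} over the index range $t = 0, \dots, t_0 - 1$ and then control the leftover term using the hypothesis $v^y_{t+1} \leq C$. First I would take $\lambda_t = \frac{\mu \eta^y}{2 (v^y_{t+1})^{\beta}}$, which is strictly positive for every $t \geq 0$, and establish the one-step inequality that \Cref{lemma:basic_strongly_convex_expand} requires. Young's inequality gives
\[
\norm*{y_{t+1} - y^*_{t+1}}^2 \leq (1 + \lambda_t) \norm*{y_{t+1} - y^*_t}^2 + \left(1 + \tfrac{1}{\lambda_t}\right) \norm*{y^*_{t+1} - y^*_t}^2 ,
\]
so that $S_t = (1 + 1/\lambda_t) \norm*{y^*_{t+1} - y^*_t}^2$. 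Since $y^*(\cdot)$ is $\rk$-Lipschitz by \Cref{lemma:lip_y_star} and $x_{t+1} - x_t = -\eta_t \nabla_x \widetilde{f}(x_t, y_t)$, I would bound $\norm*{y^*_{t+1} - y^*_t}^2 \leq \rk^2 \eta_t^2 \norm*{\nabla_x \widetilde{f}(x_t, y_t)}^2$, yielding a concrete expression for $S_t$ in terms of the primal stochastic gradient.

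Next I would apply \Cref{lemma:basic_strongly_convex_expand} with its two range endpoints set to $0$ and $t_0$. A direct computation shows that with this choice of $\lambda_t$ one has $2 \rg_t (1 + \lambda_t) = \rg_t (2 + \mu \rg_t)$, which is precisely why the negative term in the claim carries the coefficient $\frac{1}{\rg_t (2 + \mu \rg_t)}$; the first two sums on the right-hand side of the statement are reproduced verbatim. What is then left is to bound the remainder $\Ep{\sum_{t=0}^{t_0-1} \frac{S_t}{2 \rg_t (1 + \lambda_t)}}$ by the explicit constant-times-AdaGrad-sum appearing in the claim.

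This remainder estimate is the bulk of the work and the step I expect to be the main obstacle. Using the crude relaxation $\frac{1}{1 + \lambda_t} \leq 1$ (it is this cruder bound, rather than the tighter $1/\lambda_t$ simplification, that produces the stated coefficient $\mu \eta^y C^\beta + 2 C^{2\beta}$ instead of just $2C^{2\beta}$), I would compute
\[
\frac{1 + 1/\lambda_t}{\rg_t} = \frac{(v^y_{t+1})^{\beta}}{\eta^y} + \frac{2 (v^y_{t+1})^{2\beta}}{\mu (\eta^y)^2} \leq \frac{\mu \eta^y C^{\beta} + 2 C^{2\beta}}{\mu (\eta^y)^2} ,
\]
where the final inequality invokes $v^y_{t+1} \leq C$. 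I would then decouple the primal stepsize from the max through $\eta_t^2 \leq (\eta^x)^2 / (v^x_{t+1})^{2\alpha}$, reducing the remaining sum to $\sum_{t=0}^{t_0-1} \norm*{\nabla_x \widetilde{f}(x_t, y_t)}^2 / (v^x_{t+1})^{2\alpha}$. Since $v^x_{t+1} = v^x_0 + \sum_{k=0}^{t} \norm*{\nabla_x \widetilde{f}(x_k, y_k)}^2$, \Cref{lemma:sum} with exponent $2\alpha$ controls this sum: when $\alpha \geq 1/2$ one first factors out $(v^x_0)^{1-2\alpha}$ and applies the $\alpha = 1$ case to get $\frac{1 + \log v^x_{t_0} - \log v^x_0}{(v^x_0)^{2\alpha - 1}}$, and when $\alpha < 1/2$ the general case gives $\frac{(v^x_{t_0})^{1-2\alpha}}{1 - 2\alpha}$. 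Multiplying the constant $\frac{\rk^2 (\mu \eta^y C^\beta + 2 C^{2\beta}) (\eta^x)^2}{2 \mu (\eta^y)^2}$ by this case-split bound delivers exactly the third term of the claim. The two delicate points are keeping the $C$-dependence in the precise stated form, which forces the $\frac{1}{1+\lambda_t} \leq 1$ relaxation, and correctly matching the two regimes $\alpha \geq 1/2$ and $\alpha < 1/2$ when invoking \Cref{lemma:sum}.
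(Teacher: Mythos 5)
Your proposal is correct and follows the paper's own proof essentially step for step: the same Young's inequality with $\lambda_t = \frac{\mu \rg_t}{2}$ fed into \Cref{lemma:basic_strongly_convex_expand} (using $2\rg_t(1+\lambda_t) = \rg_t(2+\mu\rg_t)$), the same crude relaxation $1 + \lambda_t \geq 1$ combined with $v^y_{t+1} \leq C$ to produce exactly the coefficient $\frac{\mu \eta^y C^{\beta} + 2C^{2\beta}}{2\mu(\eta^y)^2}$, and the same final chain of the $\rk$-Lipschitzness of $y^*(\cdot)$, the bound $\max\{v^x_{t+1}, v^y_{t+1}\}^{2\alpha} \geq (v^x_{t+1})^{2\alpha}$, and \Cref{lemma:sum} with the $\alpha \geq 1/2$ versus $\alpha < 1/2$ case split. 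Your parenthetical observations (why the cruder bound is what yields the stated constant, and the initial-term bookkeeping $\frac{v^x_0}{(v^x_0)^{2\alpha}}$ implicit in applying \Cref{lemma:sum}) are accurate and mirror the paper's derivation.
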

\begin{proof}
By Young's inequality, we have
\begin{align*}
  \norm*{y_{t+1} - y^*_{t+1}}^2
  \leq (1 + \lambda_t) \norm*{y_{t+1} - y^*_{t}}^2 + \left(1 + \frac{1}{\lambda_t}\right) \norm*{y^*_{t+1} - y^*_{t}}^2.
\end{align*}
Then letting $\lambda_t = \frac{\mu \rg_t}{2}$ and by \Cref{lemma:basic_strongly_convex_expand}, we have
\begin{align*}
&\fakeeq \Ep{\sum_{t=0}^{t_0-1} \left( f(x_{t}, y^*_{t}) - f(x_{t}, y_t) \right)} \\
&\leq \Ep{\sum_{t=0}^{t_0-1} \left( \frac{1 - \rg_t \mu}{2 \rg_t} \norm*{y_t - y^*_{t}}^2 - \frac{1}{\rg_t (2 + \mu \rg_t)} \norm*{y_{t+1} - y^*_{t+1}}^2 \right)} \\
&\fakeeq + \Ep{\sum_{t=0}^{t_0-1} \frac{\rg_t}{2} \norm*{\nabla_y \widetilde{f}(x_{t}, y_t)}^2}
+ \Ep{\sum_{t=0}^{t_0-1} \frac{\left(1 + \frac{2}{\mu \rg_t}\right)}{\rg_t (2 + \mu \rg_t)} \norm*{y^*_{t+1} - y^*_{t}}^2 }.
\end{align*}
We now remain to bound the last term:
\begin{align*}
  &\fakeeq \Ep{\sum_{t=0}^{t_0-1} \frac{\left(1 + \frac{2}{\mu \rg_t}\right)}{\rg_t (2 + \mu \rg_t)} \norm*{y^*_{t+1} - y^*_{t}}^2 } \\
  &\leq \Ep{\sum_{t=0}^{t_0-1} \frac{\left(1 + \frac{2}{\mu \rg_t}\right)}{2\rg_t} \norm*{y^*_{t+1} - y^*_{t}}^2 } \\
  &= \Ep{\sum_{t=0}^{t_0-1} \frac{\mu \eta^y \left(v^y_{t+1}\right)^{\beta} + 2 \left(v^y_{t+1}\right)^{2\beta}}{2 \mu \left(\eta^y\right)^2 }
  \norm*{y^*_{t+1} - y^*_{t}}^2 } \\
  &\leq \frac{\mu \eta^y C^{\beta} + 2 C^{2\beta}}{2 \mu \left(\eta^y\right)^2 } \Ep{\sum_{t=0}^{t_0-1} 
  \norm*{y^*_{t+1} - y^*_{t}}^2 }.
\end{align*}
By \Cref{lemma:lip_y_star} we have
\begin{align*}
  \sum_{t=0}^{t_0-1} \norm*{y^*_{t+1} - y^*_{t}}^2
  &\leq \rk^2 \sum_{t=0}^{t_0-1} \norm*{x_{t+1} - x_{t}}^2 \\
  &= \rk^2 \sum_{t=0}^{t_0-1} \eta_{t}^2 \norm*{\nabla_x \widetilde{f}(x_{t}, y_{t})}^2 \\
  &= \rk^2 \left(\eta^x\right)^2 \sum_{t=0}^{t_0-1} \frac{1}{\max\left\{v^x_{t+1}, v^y_{t+1}\right\}^{2 \alpha}} \norm*{\nabla_x \widetilde{f}(x_{t}, y_{t})}^2 \\
  &\leq \rk^2 \left(\eta^x\right)^2 \sum_{t=0}^{t_0-1} \frac{1}{\left(v^x_{t+1}\right)^{2 \alpha}} \norm*{\nabla_x \widetilde{f}(x_{t}, y_{t})}^2 \\
  &\leq \rk^2 \left(\eta^x\right)^2 \left( \frac{v^x_{0}}{\left(v^x_{0}\right)^{2 \alpha}} + \sum_{t=0}^{t_0-1} \frac{1}{\left(v^x_{t+1}\right)^{2 \alpha}} \norm*{\nabla_x \widetilde{f}(x_{t}, y_{t})}^2\right) \\
  &\leq \rk^2 \left(\eta^x\right)^2 
  \left( \frac{1 + \log v^x_{t_0} - \log v^x_{0}}{\left(v^x_{0}\right)^{2\alpha-1}} \cdot \mathbf{1}_{\alpha \geq 0.5} 
  +  \frac{\left(v_{t_0}^x\right)^{1-2\alpha}}{1 - 2\alpha} \cdot \mathbf{1}_{\alpha < 0.5}  \right)
\end{align*}
where we applied \Cref{lemma:sum} in the last inequality.
Bringing back this result, we finish the proof.

\end{proof}

\begin{lemma} \label{lemma:v_large}
  Under the same setting as \Cref{theorem:tiada_stoc},
  if $t_0$ is the first iteration such that $v^y_{t_0+1} > C$, then we have
\begin{align*}
&\fakeeq \Ep{\sum_{t=t_0}^{T-1} \left( f(x_{t}, y^*_{t}) - f(x_{t}, y_t) \right)} \\
&\leq \Ep{\sum_{t=t_0}^{T-1} \left( \frac{1 - \rg_t \mu}{2 \rg_t} \norm*{y_t - y^*_{t}}^2 - \frac{1}{\rg_t (2 + \mu \rg_t)} \norm*{y_{t+1} - y^*_{t+1}}^2 \right)}
 + \Ep{\sum_{t=t_0}^{T-1} \frac{\rg_t}{2} \norm*{\nabla_y \widetilde{f}(x_{t}, y_t)}^2} \\
&\fakeeq+ \left(\rk^2 + \frac{\widehat{L}^2  G^2 \left(\eta^x\right)^2}{\mu \eta^y \left(v^y_{0}\right)^{2\alpha - \beta}}\right) 
    \frac{\left(\eta^x\right)^2 }{2 (1 - \alpha) \eta^y \left(v^y_{0}\right)^{\alpha - \beta} }
    \Ep{ \left(v^x_{T}\right)^{1-\alpha}} \\
&\fakeeq+ \frac{2\rk^2 \left(\eta^x\right)^2}{\mu \left(\eta^y\right)^2 C^{2\alpha - 2\beta}}
   \Ep{ \sum_{t=t_0}^{T-1} \norm*{\nabla_x f(x_t, y_t)}^2}
+ \left(\frac{1}{\mu} + \frac{\eta^y }{\left(v^y_{0}\right)^{\beta}}\right) 
    \frac{4\rk\eta^x G^2}{\eta^y \left(v^y_{0}\right)^{\alpha}} \Ep{ \left(v^y_{T}\right)^{\beta}}.
\end{align*}
\end{lemma}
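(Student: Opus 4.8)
The plan is to run the same Lyapunov-style argument as in \Cref{lemma:v_small}, but to replace the crude Lipschitz control of the maximizer drift by the \emph{second-order} smoothness of $y^*(\cdot)$ from \Cref{lemma:smooth_y_star}, which is exactly what is needed once $v^y$ has grown past $C$. First I would apply Young's inequality to separate the drift of $y^*$, writing $\norm*{y_{t+1}-y^*_{t+1}}^2 \le (1+\lambda_t)\norm*{y_{t+1}-y^*_{t}}^2 + (1+\tfrac{1}{\lambda_t})\norm*{y^*_{t+1}-y^*_{t}}^2$ and choosing $\lambda_t=\mu\rg_t/2$. Setting $S_t=(1+\tfrac{1}{\lambda_t})\norm*{y^*_{t+1}-y^*_t}^2$ and invoking \Cref{lemma:basic_strongly_convex_expand} reproduces verbatim the first two terms of the claim (the telescoping term, whose denominator $\rg_t(2+\mu\rg_t)$ is exactly $2\rg_t(1+\lambda_t)$, and the $\tfrac{\rg_t}{2}\norm*{\nabla_y\widetilde f(x_t,y_t)}^2$ term) and leaves the single residual $\Ep{\sum_{t=t_0}^{T-1}\tfrac{1}{\mu\rg_t^2}\norm*{y^*_{t+1}-y^*_t}^2}$, using the identity $\tfrac{1+1/\lambda_t}{2\rg_t(1+\lambda_t)}=\tfrac{1}{\mu\rg_t^2}$. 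The whole problem is thus to bound this residual by the third through fifth terms.

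Next I would expand the drift with \Cref{lemma:smooth_y_star}: $y^*_{t+1}-y^*_t=\nabla y^*(x_t)(x_{t+1}-x_t)+e_t$ with $\norm*{e_t}\le\tfrac{\widehat{L}}{2}\norm*{x_{t+1}-x_t}^2$, giving $\norm*{y^*_{t+1}-y^*_t}^2\le 2\rk^2\norm*{x_{t+1}-x_t}^2+\tfrac{\widehat{L}^2}{2}\norm*{x_{t+1}-x_t}^4$ after using $\norm*{\nabla y^*}\le\rk$ from \Cref{lemma:lip_y_star}. Substituting $x_{t+1}-x_t=-\eta_t\nabla_x\widetilde f(x_t,y_t)$ and $\tfrac{1}{\rg_t^2}=\tfrac{(v^y_{t+1})^{2\beta}}{(\eta^y)^2}$, the fourth-order remainder is tamed by the bounded-gradient \Cref{assume:stochastic_grad}, $\norm*{x_{t+1}-x_t}^4\le G^2\eta_t^2\norm*{x_{t+1}-x_t}^2$; lower bounding $\max\{v^x_{t+1},v^y_{t+1}\}^{4\alpha}\ge (v^x_{t+1})^{\alpha}(v^y_{t+1})^{3\alpha}$ to cancel the numerator $(v^y_{t+1})^{2\beta}$ against $(v^y_{t+1})^{3\alpha}$, and telescoping the pathwise sum $\sum_t \norm*{\nabla_x\widetilde f(x_t,y_t)}^2/(v^x_{t+1})^{\alpha}\le\tfrac{1}{1-\alpha}(v^x_T)^{1-\alpha}$ via \Cref{lemma:sum}, produces exactly the $\widehat{L}^2 G^2$ contribution of the third term, with its $(v^y_0)^{3\alpha-2\beta}$ in the denominator.

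The leading term $\tfrac{2\rk^2}{\mu\rg_t^2}\eta_t^2\norm*{\nabla_x\widetilde f(x_t,y_t)}^2$ is where the stochastic conversion happens. Its coefficient carries the factor $(v^y_{t+1})^{2\beta}/\max\{v^x_{t+1},v^y_{t+1}\}^{2\alpha}$, which can be bounded by the deterministic constant $C^{2\beta-2\alpha}$ using $\max\ge v^y_{t+1}>C$; this yields the constant $\tfrac{2\rk^2(\eta^x)^2}{\mu(\eta^y)^2 C^{2\alpha-2\beta}}$ of the fourth term. The key point that lets this term carry the \emph{true} gradient $\norm*{\nabla_x f}^2$ is that the capped coefficient is a function of $v^y_{t+1}$ alone, and $v^y_{t+1}=v^y_t+\norm*{\nabla_y F(x_t,y_t;\xi^y_t)}^2$ is independent of $\xi^x_t$; hence conditioning on $\xi^y_t$ and the past and averaging over the independent $\xi^x_t$ replaces $\Ep{\norm*{\nabla_x\widetilde f}^2}$ by $\norm*{\nabla_x f}^2$ plus a conditional-variance remainder bounded by $G^2$. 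That remainder together with the residual telescoping then assembles into the $\rk^2$ part of the third term and into the fifth term's $(v^y_T)^{\beta}$ (again via \Cref{lemma:sum}), whose prefactor $\tfrac1\mu+\tfrac{\eta^y}{(v^y_0)^\beta}$ comes from the same $\norm*{y_{t_0}-y^*_{t_0}}^2$-type estimate used in \Cref{lemma:v_small}.

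The hard part is precisely this entanglement between the adaptive effective stepsizes $\eta_t,\rg_t$ and the current stochastic gradient: because $v^x_{t+1}$ contains $\norm*{\nabla_x\widetilde f(x_t,y_t)}^2$, the stepsize cannot be pulled out of a conditional expectation, and a naive bound on the variance only yields an $\cO(T)$ term instead of the sublinear $(v^x_T)^{1-\alpha}$ and $(v^y_T)^{\beta}$ growth. Threading the conditional expectation through the $v^y$-side of the $\max$ (which is $\xi^x_t$-independent) while retaining enough $v^x$-decay in the denominator to telescope the remaining stochastic pieces is the delicate balancing act; everything else is bookkeeping with \Cref{lemma:sum} and the constants $\rk,\widehat{L},G,C$.
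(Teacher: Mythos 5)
Much of your bookkeeping is sound: the application of \Cref{lemma:basic_strongly_convex_expand} with $\lambda_t=\mu\rg_t/2$ reproduces the first two terms exactly, and your fourth-order accounting for the $\widehat{L}^2$ piece even recovers the precise constant $\frac{\widehat{L}^2G^2(\eta^x)^4}{2\mu(\eta^y)^2(v^y_0)^{3\alpha-2\beta}(1-\alpha)}$ of the third term. But there is a genuine gap at the leading drift term, and it is fatal to your route. By taking Young's inequality on the \emph{whole} drift and setting $S_t=\bigl(1+\tfrac{1}{\lambda_t}\bigr)\norm*{y^*_{t+1}-y^*_t}^2$, you square the noise: the leading residual is $\frac{2\rk^2}{\mu\rg_t^2}\eta_t^2\norm*{\nabla_x\widetilde f(x_t,y_t)}^2$, whose coefficient, once capped by $C^{2\beta-2\alpha}$, is a plain constant with no decaying $v^x$ factor left. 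Your proposed conversion of $\Ep{\norm*{\nabla_x\widetilde f}^2}$ into $\norm*{\nabla_x f}^2$ then costs the conditional variance, which under \Cref{assume:stochastic_grad} is only bounded by $\cO(G^2)$ \emph{per step}; summed over $t_0\le t<T$ this is $\Theta(T)$ and cannot be ``assembled into'' the third and fifth terms, which are $o(T)$ since $\Ep{(v^x_T)^{1-\alpha}}\le (G^2T)^{1-\alpha}$ and $\Ep{(v^y_T)^{\beta}}\le (G^2T)^{\beta}$. An $\cO(T)$ remainder would also destroy the bootstrap in \Cref{theorem:tiada_stoc}, where the fourth term is absorbed into the left-hand side by the choice of $C$; nor can you simply keep the stochastic gradient, since $\sum_t\norm*{\nabla_x\widetilde f}^2=v^x_T-v^x_{t_0}$ is again linear in $T$.

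The paper avoids this by never letting the noise enter quadratically at that point. It expands $\norm*{y_{t+1}-y^*_{t+1}}^2$ \emph{exactly} and splits the first-order cross term $-2(y_{t+1}-y^*_t)^{\top}\nabla y^*(x_t)(x_{t+1}-x_t)$ along $\nabla_x\widetilde f=\nabla_x f+(\nabla_x\widetilde f-\nabla_x f)$: the true-gradient part is handled by Cauchy--Schwarz and Young with $\lambda_t=\frac{\mu\eta^y}{4(v^y_{t+1})^{\beta}}$, which is precisely what produces the fourth term carrying the \emph{true} $\norm*{\nabla_x f}^2$; the noise part stays \emph{linear}, i.e., a term $\eta_t m_t$ with $\Ep[\xi^x_t]{m_t}=0$. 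Because $\eta_t$ itself depends on $\xi^x_t$ (through $v^x_{t+1}$), the paper decouples via the index shift $\eta_t m_t=\eta_{t-1}m_t+(\eta_t-\eta_{t-1})m_t$, kills the first piece in conditional expectation, and bounds the second by the monotone total variation $\sum_t(\eta_{t-1}-\eta_t)\le\eta^x/(v^y_0)^{\alpha}$ together with $|m_t|\le M\propto (v^y_T)^{\beta}/\eta^y$; this is where the fifth term and its prefactor $\left(\frac{1}{\mu}+\frac{\eta^y}{(v^y_0)^{\beta}}\right)$ actually originate (via $\norm*{y_{t+1}-y^*_t}\le\frac{1}{\mu}\norm*{\nabla_y f}+\rg_t\norm*{\nabla_y\widetilde f}$), not from a $\norm*{y_{t_0}-y^*_{t_0}}$-type estimate as you conjecture. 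The smoothness of $y^*$ from \Cref{lemma:smooth_y_star} enters only through the second-order residual $y^*_t-y^*_{t+1}+\nabla y^*(x_t)(x_{t+1}-x_t)$, with a Young parameter $\tau$ whose companion term is absorbed into the $(1+\mu\rg_t/2)$ contraction. You correctly identified the stepsize--noise entanglement as the crux and even noted that $\rg_t$ and $v^y_{t+1}$ are $\xi^x_t$-independent, but you deploy that observation on the quadratic term, where zero-mean structure is useless; it must be exploited at the linear (martingale) level, and your decomposition discards exactly that structure.
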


\begin{proof}
  By the Lipschitzness of $y^*(\cdot)$ as in \Cref{lemma:lip_y_star}, we have
\begin{align*}
  \norm*{y_{t+1} - y^*_{t+1}}^2 
  &= \norm*{y_{t+1} - y^*_t}^2 + \norm*{y^*_{t} - y^*_{t+1}}^2
  + 2 \inp*{y_{t+1} - y^*_t}{y^*_{t} - y^*_{t+1}} \\
  &\leq \norm*{y_{t+1} - y^*_t}^2 + \rk^2 \eta_t^2 \norm*{\nabla_x \widetilde{f}(x_t, y_t)}^2
  + 2 \inp*{y_{t+1} - y^*_t}{y^*_{t} - y^*_{t+1}} \\
  &\leq \norm*{y_{t+1} - y^*_t}^2 + \rk^2 \eta_t^2 \norm*{\nabla_x \widetilde{f}(x_t, y_t)}^2
  \underbrace{ -2 \left(y_{t+1} - y^*_t\right)^\T\nabla y^*(x_t)\left(x_{t+1} - x_t\right) }_{\mlabelterm{term:p1}} \\
  &\fakeeq + \underbrace{ 2 \left(y_{t+1} - y^*_t\right)^\T\left(y^*_t - y^*_{t+1} + \nabla y^*(x_t)\left(x_{t+1} - x_t\right)\right) }_{\mlabelterm{term:p2}}.
\end{align*}

For \Refterm{term:p1}, by the Cauchy-Schwarz and Lipschitzness of $y^*(\cdot)$,
\begin{align*}
   &\fakeeq -2 \left(y_{t+1} - y^*_t\right)^\T\nabla y^*(x_t)\left(x_{t+1} - x_t\right) \\
   &= 2 \eta_t \left(y_{t+1} - y^*_t\right)^\T\nabla y^*(x_t) \nabla_x f(x_t, y_t)
    + 2 \eta_t \left(y_{t+1} - y^*_t\right)^\T\nabla y^*(x_t) \left(\nabla_x \widetilde{f}(x_t, y_t) - \nabla_x f(x_t, y_t)\right) \\
   &\leq 2 \eta_t \norm*{y_{t+1} - y^*_t} \norm*{\nabla y^*(x_t)} \norm*{\nabla_x f(x_t, y_t)}
    + 2 \eta_t \left(y_{t+1} - y^*_t\right)^\T\nabla y^*(x_t) \left(\nabla_x \widetilde{f}(x_t, y_t) - \nabla_x f(x_t, y_t)\right) \\
   &\leq 2 \norm*{y_{t+1} - y^*_t} \rk \eta_t \norm*{\nabla_x f(x_t, y_t)}
    +2 \eta_t \left(y_{t+1} - y^*_t\right)^\T\nabla y^*(x_t) \left(\nabla_x \widetilde{f}(x_t, y_t) - \nabla_x f(x_t, y_t)\right) \\
   &\leq \lambda_t \norm*{y_{t+1} - y^*_t}^2 + \frac{\rk^2 \eta_t^2}{\lambda_t} \norm*{\nabla_x f(x_t, y_t)}^2
    +2 \eta_t \left(y_{t+1} - y^*_t\right)^\T\nabla y^*(x_t) \left(\nabla_x \widetilde{f}(x_t, y_t) - \nabla_x f(x_t, y_t)\right),
\end{align*}
where we used Young's inequality in the last step and $\lambda_t > 0$ will
be determined later.

For \Refterm{term:p2}, according to Cauchy-Schwarz and the smoothness of
$y^*(\cdot)$ as shown in \Cref{lemma:smooth_y_star},
\begin{align*}
   &\fakeeq 2 \left(y_{t+1} - y^*_t\right)^\T\left(y^*_t - y^*_{t+1} + \nabla y^*(x_t)\left(x_{t+1} - x_t\right)\right) \\
   &\leq 2 \norm*{y_{t+1} - y^*_t} \norm*{y^*_t - y^*_{t+1} + \nabla y^*(x_t)\left(x_{t+1} - x_t\right)} \\
   &\leq 2 \norm*{y_{t+1} - y^*_t} \cdot \frac{\widehat{L}}{2} \norm*{x_{t+1} - x_t}^2 \\
   &= \widehat{L} \eta_t^2 \norm*{y_{t+1} - y^*_t} \norm*{\nabla_x \widetilde{f}(x_t, y_t)}^2 \\
   &\leq \widehat{L} \eta_t^2 \norm*{y_{t+1} - y^*_t}  G \cdot \norm*{\nabla_x \widetilde{f}(x_t, y_t)} \\
   &\leq \frac{\tau \widehat{L}  G^2 \eta_t^2}{2} \norm*{y_{t+1} - y^*_t}^2  + \frac{\widehat{L} \eta_t^2}{2 \tau} \norm*{\nabla_x \widetilde{f}(x_t, y_t)}^2,
\end{align*}
where in the last step we used Young's inequality and $\tau > 0$.

Therefore, in total, we have
\begin{align*}
  \norm*{y_{t+1} - y^*_{t+1}}^2 
  &\leq \left(1 + \lambda_t + \frac{\tau \widehat{L}  G^2 \eta_t^2}{2}\right) \norm*{y_{t+1} - y^*_t}^2 
  + \left(\rk^2 + \frac{\widehat{L}}{2 \tau}\right) \eta_t^2 \norm*{\nabla_x \widetilde{f}(x_t, y_t)}^2 \\
  &\fakeeq + \frac{\rk^2 \eta_t^2}{\lambda_t} \norm*{\nabla_x f(x_t, y_t)}^2 +2 \eta_t \left(y_{t+1} - y^*_t\right)^\T\nabla y^*(x_t) \left(\nabla_x \widetilde{f}(x_t, y_t) - \nabla_x f(x_t, y_t)\right).
\end{align*}
Note that we can upper bound $\eta_t$ by
\[
  \eta_t = \frac{\eta^x}{\max\left\{v^x_{t+1}, v^y_{t+1}\right\}^{\alpha}} 
  \leq \frac{\eta^x}{\left(v^y_{t+1}\right)^{\alpha}} 
  \leq \frac{\eta^x}{\left(v^y_{0}\right)^{\alpha}},
\]
and
\[
  \eta_t \leq \frac{\eta^x}{\left(v^y_{t+1}\right)^{\alpha}} 
  = \frac{\eta^x}{\left(v^y_{t+1}\right)^{\alpha - \beta}\left(v^y_{t+1}\right)^{\beta}} 
  \leq \frac{\eta^x}{\left(v^y_{0}\right)^{\alpha - \beta}\left(v^y_{t+1}\right)^{\beta}},
\]
which, plugged into the previous result, implies
\begin{align*}
  \norm*{y_{t+1} - y^*_{t+1}}^2 
  &\leq \left(1 + \lambda_t + \frac{\tau \widehat{L}  G^2 \left(\eta^x\right)^2}{2 \left(v^y_{0}\right)^{2\alpha - \beta} \left(v^y_{t+1}\right)^{\beta} }\right) \norm*{y_{t+1} - y^*_t}^2 
  + \left(\rk^2 + \frac{\widehat{L}}{2 \tau}\right) \eta_t^2 \norm*{\nabla_x \widetilde{f}(x_t, y_t)}^2 \\
  &\fakeeq + \frac{\rk^2 \eta_t^2}{\lambda_t} \norm*{\nabla_x f(x_t, y_t)}^2
   +2 \eta_t \left(y_{t+1} - y^*_t\right)^\T\nabla y^*(x_t) \left(\nabla_x \widetilde{f}(x_t, y_t) - \nabla_x f(x_t, y_t)\right).
\end{align*}
Now we choose $\lambda_t = \frac{\mu \eta^y}{4 \left(v^y_{t+1}\right)^{\beta}}$
and $\tau = \frac{\mu \eta^y \left(v^y_{0}\right)^{2\alpha - \beta}}{2 \widehat{L}  G^2 \left(\eta^x\right)^2}$,
and get
\begin{align*}
  &\fakeeq \norm*{y_{t+1} - y^*_{t+1}}^2 \\
  &\leq \left(1 + \frac{\mu \eta^y}{2 \left(v^y_{t+1}\right)^{\beta}} \right) \norm*{y_{t+1} - y^*_t}^2 
  + \left(\rk^2 + \frac{\widehat{L}^2  G^2 \left(\eta^x\right)^2}{\mu \eta^y \left(v^y_{0}\right)^{2\alpha - \beta}}\right) \eta_t^2 \norm*{\nabla_x \widetilde{f}(x_t, y_t)}^2 \\
  &\fakeeq + \frac{4 \rk^2 \left(v^y_{t+1}\right)^{\beta} \eta_t^2}{\mu \eta^y} \norm*{\nabla_x f(x_t, y_t)}^2
   +2 \eta_t \left(y_{t+1} - y^*_t\right)^\T\nabla y^*(x_t) \left(\nabla_x \widetilde{f}(x_t, y_t) - \nabla_x f(x_t, y_t)\right).
\end{align*}
Then \Cref{lemma:basic_strongly_convex_expand} gives us
\begin{align*}
&\fakeeq \Ep{\sum_{t=t_0}^{T-1} \left( f(x_{t}, y^*_{t}) - f(x_{t}, y_t) \right)} \\
&\leq \Ep{\sum_{t=t_0}^{T-1} \left( \frac{1 - \rg_t \mu}{2 \rg_t} \norm*{y_t - y^*_{t}}^2 - \frac{1}{\rg_t (2 + \mu \rg_t)} \norm*{y_{t+1} - y^*_{t+1}}^2 \right)}
 + \Ep{\sum_{t=t_0}^{T-1} \frac{\rg_t}{2} \norm*{\nabla_y \widetilde{f}(x_{t}, y_t)}^2} \\
&\fakeeq+ \underbrace{ \Ep{\sum_{t=t_0}^{T-1} \frac{1}{\rg_t (2 + \mu \rg_t)} \left(\rk^2 + \frac{\widehat{L}^2  G^2 \left(\eta^x\right)^2}{\mu \eta^y \left(v^y_{0}\right)^{2\alpha - \beta}}\right) \eta_t^2 \norm*{\nabla_x \widetilde{f}(x_t, y_t)}^2 } }_{\mlabelterm{term:lv1}} \\
&\fakeeq+ \underbrace{ \Ep{\sum_{t=t_0}^{T-1} \frac{4 \rk^2 \left(v^y_{t+1}\right)^{\beta} \eta_t^2}{\rg_t (2 + \mu \rg_t) \mu \eta^y} \norm*{\nabla_x f(x_t, y_t)}^2} }_{\mlabelterm{term:lv2}} \\
&\fakeeq+ \underbrace{ \Ep{\sum_{t=t_0}^{T-1} \frac{2\eta_t}{\rg_t (2 + \mu \rg_t)} \left(y_{t+1} - y^*_t\right)^\T\nabla y^*(x_t) \left(\nabla_x \widetilde{f}(x_t, y_t) - \nabla_x f(x_t, y_t)\right) } }_{\mlabelterm{term:lv3}} \\
\end{align*}
Now we proceed to bound each term.

\paragraph{\Refterm{term:lv1}}
\begin{align*}
  \mRefterm{term:lv1}
   &\leq \left(\rk^2 + \frac{\widehat{L}^2  G^2 \left(\eta^x\right)^2}{\mu \eta^y \left(v^y_{0}\right)^{2\alpha - \beta}}\right) 
    \Ep{\sum_{t=t_0}^{T-1} \frac{\eta_t^2}{2\rg_t}  \norm*{\nabla_x \widetilde{f}(x_t, y_t)}^2 }  \\
   &= \left(\rk^2 + \frac{\widehat{L}^2  G^2 \left(\eta^x\right)^2}{\mu \eta^y \left(v^y_{0}\right)^{2\alpha - \beta}}\right) 
    \Ep{\sum_{t=t_0}^{T-1} \frac{\left(\eta^x\right)^2 \left(v^y_{t+1}\right)^{\beta}}{2 \eta^y \max\left\{v^x_{t+1}, v^y_{t+1}\right\}^{2\alpha}}  \norm*{\nabla_x \widetilde{f}(x_t, y_t)}^2 }  \\
   &\leq \left(\rk^2 + \frac{\widehat{L}^2  G^2 \left(\eta^x\right)^2}{\mu \eta^y \left(v^y_{0}\right)^{2\alpha - \beta}}\right) 
    \Ep{\sum_{t=t_0}^{T-1} \frac{\left(\eta^x\right)^2 \left(v^y_{t+1}\right)^{\beta}}{2 \eta^y \left(v^y_{t+1}\right)^{\beta} \left(v^y_{t+1}\right)^{\alpha - \beta} \left(v^x_{t+1}\right)^{\alpha} }  \norm*{\nabla_x \widetilde{f}(x_t, y_t)}^2 }  \\
   &\leq \left(\rk^2 + \frac{\widehat{L}^2  G^2 \left(\eta^x\right)^2}{\mu \eta^y \left(v^y_{0}\right)^{2\alpha - \beta}}\right) 
    \Ep{\sum_{t=t_0}^{T-1} \frac{\left(\eta^x\right)^2 }{2 \eta^y \left(v^y_{0}\right)^{\alpha - \beta} \left(v^x_{t+1}\right)^{\alpha} }  \norm*{\nabla_x \widetilde{f}(x_t, y_t)}^2 }  \\
   &\leq \left(\rk^2 + \frac{\widehat{L}^2  G^2 \left(\eta^x\right)^2}{\mu \eta^y \left(v^y_{0}\right)^{2\alpha - \beta}}\right) 
    \Ep{\frac{\left(\eta^x\right)^2 }{2 \eta^y \left(v^y_{0}\right)^{\alpha - \beta} } 
    \left(\frac{v^x_{0}}{\left(v^x_{0}\right)^{\alpha}} + \sum_{t=0}^{T-1} \frac{1}{\left(v^x_{t+1}\right)^{\alpha} }  \norm*{\nabla_x \widetilde{f}(x_t, y_t)}^2\right) }  \\
   &\leq \left(\rk^2 + \frac{\widehat{L}^2  G^2 \left(\eta^x\right)^2}{\mu \eta^y \left(v^y_{0}\right)^{2\alpha - \beta}}\right) 
    \frac{\left(\eta^x\right)^2 }{2 (1 - \alpha) \eta^y \left(v^y_{0}\right)^{\alpha - \beta} }
    \Ep{ \left(v^x_{T}\right)^{1-\alpha}},
\end{align*}
where we used \Cref{lemma:sum} in the last step.

\paragraph{\Refterm{term:lv2}}
\begin{align*}
  \mRefterm{term:lv2}
   &\leq \Ep{\sum_{t=t_0}^{T-1} \frac{2 \rk^2 \left(v^y_{t+1}\right)^{\beta} \eta_t^2}{\rg_t \mu \eta^y} \norm*{\nabla_x f(x_t, y_t)}^2}  \\
   &= \frac{2\rk^2 \left(\eta^x\right)^2}{\mu \left(\eta^y\right)^2}
   \Ep{\sum_{t=t_0}^{T-1} \frac{\left(v^y_{t+1}\right)^{2\beta}}{\max\left\{v^x_{t+1}, v^y_{t+1}\right\}^{2\alpha}} \norm*{\nabla_x f(x_t, y_t)}^2}  \\
   &\leq \frac{2\rk^2 \left(\eta^x\right)^2}{\mu \left(\eta^y\right)^2}
   \Ep{\sum_{t=t_0}^{T-1} \frac{\left(v^y_{t+1}\right)^{2\beta}}{\left(v^y_{t+1}\right)^{2\alpha}} \norm*{\nabla_x f(x_t, y_t)}^2}  \\
   &\leq \frac{2\rk^2 \left(\eta^x\right)^2}{\mu \left(\eta^y\right)^2}
   \Ep{\frac{1}{\left(v^y_{t_0+1}\right)^{2\alpha - 2\beta}} \sum_{t=t_0}^{T-1} \norm*{\nabla_x f(x_t, y_t)}^2}  \\
   &\leq \frac{2\rk^2 \left(\eta^x\right)^2}{\mu \left(\eta^y\right)^2 C^{2\alpha - 2\beta}}
   \Ep{ \sum_{t=t_0}^{T-1} \norm*{\nabla_x f(x_t, y_t)}^2}  \\
\end{align*}

\paragraph{\Refterm{term:lv3}}
For simplicity, denote $m_t := \frac{2}{\rg_t (2 + \mu \rg_t)} \left(y_{t+1} - y^*_t\right)^\T\nabla y^*(x_t) \left(\nabla_x \widetilde{f}(x_t, y_t) - \nabla_x f(x_t, y_t)\right) $
Since $y^*(\cdot)$ is $\rk$-Lipschitz as in \Cref{lemma:lip_y_star}, $|m_t|$ can be upper bounded as
\begin{align*}
  |m_t| 
  &\leq \frac{1}{\rg_t} \norm*{y_{t+1} - y^*_t} \norm*{\nabla y^*(x_t)} 
  \left(\norm*{\nabla_x \widetilde{f}(x_t, y_t)} + \norm*{\nabla_x f(x_t, y_t)}\right) \\
  &\leq \frac{\rk}{\rg_t} \norm*{y_{t+1} - y^*_t}
  \left(\norm*{\nabla_x \widetilde{f}(x_t, y_t)} + \norm*{\nabla_x f(x_t, y_t)}\right) \\
  &\leq \frac{\rk}{\rg_t} \norm*{\cP_{\cY}\left(y_{t} + \rg_t \nabla_y \widetilde{f}(x_t, y_t)\right) - y^*_t}
  \left(\norm*{\nabla_x \widetilde{f}(x_t, y_t)} + \norm*{\nabla_x f(x_t, y_t)}\right) \\
  &\leq \frac{\rk}{\rg_t} \norm*{y_{t} + \rg_t \nabla_y \widetilde{f}(x_t, y_t) - y^*_t}
  \left(\norm*{\nabla_x \widetilde{f}(x_t, y_t)} + \norm*{\nabla_x f(x_t, y_t)}\right) \\
  &\leq \frac{\rk}{\rg_t} \left(\norm*{y_{t} - y^*_t} + \norm*{\rg_t \nabla_y \widetilde{f}(x_t, y_t)}\right)
  \left(\norm*{\nabla_x \widetilde{f}(x_t, y_t)} + \norm*{\nabla_x f(x_t, y_t)}\right) \\
  &\leq \frac{\rk}{\rg_t} \left(\frac{1}{\mu} \norm*{\nabla_y f(x_t, y_t)} + \norm*{\rg_t \nabla_y \widetilde{f}(x_t, y_t)}\right)
  \left(\norm*{\nabla_x \widetilde{f}(x_t, y_t)} + \norm*{\nabla_x f(x_t, y_t)}\right) \\
  &\leq \underbrace{ \frac{2G\rk}{\rg_{T-1}} \left(\frac{G}{\mu} + \frac{\eta^y  G}{\left(v^y_{0}\right)^{\beta}}\right)
   }_{M}.
\end{align*}
Also note that $\rg_t$ and $y_{t+1}$ does not depend on $\xi^x_t$,
so $\Ep[\xi^x_t]{m_t} = 0$. Next, we look at \Refterm{term:lv3}.
\begin{align*}
  \mRefterm{term:lv3}
  &=\Ep{\sum_{t=t_0}^{T-1} \eta_t m_t} \\
  &= \Ep{ \eta_{t_0} m_{t_0}
    + \sum_{t=t_0+1}^{T-1} \eta_{t-1} m_t
  + \sum_{t=t_0+1}^{T-1} \left(\eta_t - \eta_{t-1} \right) m_t } \\
  &\leq \Ep{ \frac{\eta^x}{\left(v^y_{0}\right)^{\alpha}} M
    + \sum_{t=t_0+1}^{T-1} \eta_{t-1} \Ep[\xi^x_{t}]{ m_t}
  + \sum_{t=t_0+1}^{T-1} \left(\eta_{t-1} - \eta_{t} \right) (-m_t) } \\
  &\leq \Ep{ \frac{\eta^x}{\left(v^y_{0}\right)^{\alpha}} M
  + \sum_{t=t_0+1}^{T-1} \left(\eta_{t-1} - \eta_{t} \right) M} \\
  &\leq \Ep{ \frac{2\eta^x}{\left(v^y_{0}\right)^{\alpha}} M} \\
  &= \left(\frac{1}{\mu} + \frac{\eta^y }{\left(v^y_{0}\right)^{\beta}}\right) 
    \frac{4\rk\eta^xG^2}{\eta^y \left(v^y_{0}\right)^{\alpha}} \Ep{ \left(v^y_{T}\right)^{\beta}}.
\end{align*}
Summarizing all the results, we finish the proof.

\end{proof}

\begin{lemma} \label{lemma:telescope_distance}
  Under the same setting as \Cref{theorem:tiada_stoc},
  we have
  \begin{align*}
    &\fakeeq \Ep{\sum_{t=0}^{T-1} \left( \frac{1 - \rg_t \mu}{2 \rg_t} \norm*{y_t - y^*_{t}}^2 - \frac{1}{\rg_t (2 + \mu \rg_t)} \norm*{y_{t+1} - y^*_{t+1}}^2 \right)} \\
    &\leq \frac{\left(v^y_{0}\right)^{\beta} G^2}{2\mu^2 \eta^y}
    + \frac{\left(2\beta G\right)^{\frac{1}{1 - \beta} + 2} G^2}{
      4 \mu^{\frac{1}{1-\beta} + 3} \left(\eta^y\right)^{\frac{1}{1-\beta} + 2}\left(v^y_{0}\right)^{2 - 2\beta}}.
  \end{align*}
\end{lemma}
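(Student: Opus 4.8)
The plan is to prove the claimed inequality \emph{pathwise} (for every realization of the randomness) and then take expectations, since the right-hand side is a deterministic constant. Throughout I would write $a_t := \|y_t - y^*_t\|^2$ and recall $\gamma_t = \eta^y/(v^y_{t+1})^{\beta}$, the dual effective stepsize, which is nonincreasing in $t$ because $v^y_{t+1}$ is nondecreasing. First I would record the uniform bound $a_t \le G^2/\mu^2$: by $\mu$-strong concavity of $f(x_t,\cdot)$ and the interior-optimality $\nabla_y f(x_t,y^*_t)=0$ from \Cref{assume:interior_optimal} one has $\mu\|y_t-y^*_t\|\le\|\nabla_y f(x_t,y_t)\|$, while $\|\nabla_y f(x_t,y_t)\|\le G$ follows from \Cref{assume:stochastic_grad} via Jensen's inequality. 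Hence every $a_t$ is deterministically bounded.

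The core is an exact rewriting of the summand using the identity $\frac{1}{2\gamma_t}=\frac{1}{\gamma_t(2+\mu\gamma_t)}+\frac{\mu}{2(2+\mu\gamma_t)}$. Setting $w_t := \frac{1}{\gamma_t(2+\mu\gamma_t)}$, the $t$-th term becomes $w_t(a_t-a_{t+1})-\frac{\mu(1+\mu\gamma_t)}{2(2+\mu\gamma_t)}a_t$, where the damping coefficient satisfies $\frac{\mu(1+\mu\gamma_t)}{2(2+\mu\gamma_t)}\ge \mu/4$ because $\frac{1+s}{2+s}\ge\frac12$. I would then apply summation by parts to $\sum_t w_t(a_t-a_{t+1})$, discard the nonnegative boundary term $-w_{T-1}a_T$ and the $t=0$ damping term, and merge the coefficients of each $a_t$ to reach the clean inequality $\text{(sum)}\le w_0 a_0+\sum_{t=1}^{T-1}\big[(w_t-w_{t-1})-\tfrac{\mu}{4}\big]a_t$. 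The boundary term contributes $w_0 a_0\le\frac{(v^y_0)^{\beta}}{2\eta^y}\cdot\frac{G^2}{\mu^2}$, matching the first term of the claim.

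It remains to bound the residual sum, which is where the real work lies. Since $\gamma_t$ is nonincreasing, the part $-\frac{\mu}{2}\big(\frac{1}{2+\mu\gamma_t}-\frac{1}{2+\mu\gamma_{t-1}}\big)$ of $w_t-w_{t-1}$ is $\le 0$, so $w_t-w_{t-1}\le \frac{1}{2}\big(\frac{1}{\gamma_t}-\frac{1}{\gamma_{t-1}}\big)=\frac{(v^y_{t+1})^{\beta}-(v^y_t)^{\beta}}{2\eta^y}$; concavity of $s\mapsto s^{\beta}$ with $\|g^y_t\|^2\le G^2$ then gives $w_t-w_{t-1}\le \frac{\beta G^2}{2\eta^y}(v^y_t)^{\beta-1}$. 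Consequently the bracket $(w_t-w_{t-1})-\mu/4$ can be positive only while $v^y_t< V^\ast:=\big(\tfrac{2\beta G^2}{\mu\eta^y}\big)^{1/(1-\beta)}$; for all larger $v^y_t$ the bracket is $\le0$ and, as $a_t\ge0$, those terms drop out. On the transient set $\{t:\,v^y_t<V^\ast\}$ I would use $a_t\le G^2/\mu^2$ and control $\sum(w_t-w_{t-1})$ either by telescoping $(v^y_{t+1})^{\beta}-(v^y_t)^{\beta}$ up to the first crossing of $V^\ast$, or by noting that each positive-bracket step increases $(v^y)^{\beta}$ by more than $\mu\eta^y/2$ and hence bounding the number of transient steps, yielding a $T$-independent constant in $V^\ast,G,\mu,\eta^y,v^y_0,\beta$ — the second term of the bound. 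Taking expectations then closes the argument. The main obstacle is precisely this final step: one must verify that the crossover between the shrinking increments $w_t-w_{t-1}$ and the constant damping $\mu/4$ occurs at a $T$-independent value $V^\ast$ of $v^y_t$, and then collapse the transient contribution into a single closed form, where matching the exact powers of $G,\mu,\eta^y,v^y_0$ demands care in how the telescoped increment and the threshold are combined.
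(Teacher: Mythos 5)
Your proposal is correct and takes essentially the same route as the paper: your Abel-summation decomposition $w_0a_0+\sum_{t\geq 1}\left[(w_t-w_{t-1})-\mu/4\right]a_t$ is precisely the paper's direct regrouping of the coefficients of $\|y_t-y_t^*\|^2$ (your discarded monotone part of $w_t-w_{t-1}$ is the paper's dropped term $\frac{\mu^2(\eta^y)^2}{4(v^y_t)^\beta+2\mu\eta^y}$), and your transient analysis---concavity of $s\mapsto s^\beta$ (Bernoulli), the threshold $V^{\ast}=\left(\frac{2\beta G^2}{\mu\eta^y}\right)^{1/(1-\beta)}$, the uniform bound $a_t\leq G^2/\mu^2$, and a finite count of positive-increment events---is exactly the paper's ``positive only a constant number of times'' argument with the same effective damping $\mu/4$. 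The one (shared) blemish is the boundary term: since $\gamma_0=\eta^y/(v^y_1)^\beta$, the coefficient of $a_0$ is $(v^y_1)^\beta/(2\eta^y)$ rather than $(v^y_0)^\beta/(2\eta^y)$, but the paper's own proof makes the identical simplification, and the excess $\left((v^y_1)^\beta-(v^y_0)^\beta\right)/(2\eta^y)\leq \beta G^2/\left(2\eta^y(v^y_0)^{1-\beta}\right)$ is absorbable into the transient constant.
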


\begin{proof}
  \begin{align*}
    &\fakeeq \Ep{\sum_{t=0}^{T-1} \left( \frac{1 - \rg_t \mu}{2 \rg_t} \norm*{y_t - y^*_{t}}^2 - \frac{1}{\rg_t (2 + \mu \rg_t)} \norm*{y_{t+1} - y^*_{t+1}}^2 \right)} \\
    &\leq \left(\frac{\left(v^y_{0}\right)^{\beta}}{2\eta^y} - \frac{\mu}{2} \right) \norm*{y_{0} - y_{0}^*}^2
    + \frac{1}{2\eta^y}\sum_{t=1}^{T-1} \left( \left(v^y_{t+1}\right)^{\beta} - \frac{\mu \eta^y}{2} - \left(v^y_{t}\right)^{\beta} - \frac{\mu^2 \left(\eta^y\right)^2}{4 \left(v^y_{t}\right)^{\beta} + 2 \mu \eta^y} \right) \norm*{y_t - y^*_{t}}^2 \\
    &\leq \frac{\left(v^y_{0}\right)^{\beta} G^2}{2\mu^2 \eta^y}
    + \frac{1}{2\eta^y} \underbrace{ \sum_{t=1}^{T-1} \left( \left(v^y_{t+1}\right)^{\beta} - \frac{\mu \eta^y}{2} - \left(v^y_{t}\right)^{\beta} \right) \norm*{y_t - y^*_{t}}^2 }_{\mlabelterm{term:td1}}.
  \end{align*}
For \Refterm{term:td1},we will bound it using the same strategy as in \citep{yang2022nest}.
The general idea is to show that $\left(v^y_{t+1}\right)^{\beta} - \frac{\mu \eta^y}{2} - \left(v^y_{t}\right)^{\beta}$
is positive for only a constant number of times. If the term is positive
at iteration $t$, then we have
\begin{align*}
  0 &< \left(v^y_{t+1}\right)^{\beta} - \left(v^y_{t}\right)^{\beta} - \frac{\mu \eta^y}{2} \\
    &= \left(v^y_{t} + \norm*{\nabla_y \widetilde{f}(x_{t}, y_t)}^2 \right)^{\beta} - \left(v^y_{t}\right)^{\beta} - \frac{\mu \eta^y}{2} \\
    &= \left(v^y_{t}\right)^{\beta} \left(1 + \frac{\norm*{\nabla_y \widetilde{f}(x_{t}, y_t)}^2}{v^y_{t}} \right)^{\beta} - \left(v^y_{t}\right)^{\beta} - \frac{\mu \eta^y}{2} \\
    &\leq \left(v^y_{t}\right)^{\beta} \left(1 + \frac{\beta \norm*{\nabla_y \widetilde{f}(x_{t}, y_t)}^2}{v^y_{t}} \right) - \left(v^y_{t}\right)^{\beta} - \frac{\mu \eta^y}{2} \\
    &= \frac{\beta \norm*{\nabla_y \widetilde{f}(x_{t}, y_t)}^2}{\left(v^y_{t}\right)^{1-\beta}} - \frac{\mu \eta^y}{2}, \numberthis \label{eq:pe1}
\end{align*}
where in the last inequality we used Bernoulli’s inequality.
By rearranging the terms, we have the two following conditions
\begin{align*}
  \begin{cases}
    \norm*{\nabla_y \widetilde{f}(x_{t}, y_t)}^2 > \frac{\mu \eta^y}{2 \beta} \left(v^y_{t}\right)^{1 - \beta}
      \geq \frac{\mu \eta^y}{2 \beta} \left(v^y_{0}\right)^{1 - \beta} \\
      \left(v^y_{t}\right)^{1-\beta} < \frac{2\beta}{\mu \eta^y} \norm*{\nabla_y \widetilde{f}(x_{t}, y_t)}^2 
        \leq \frac{2\beta  G}{\mu \eta^y},
  \end{cases}
\end{align*}
This indicates that at each time the term is positive, the gradient norm
must be large enough and the accumulated gradient norm, i.e., $v^y_{t+1}$,
must be small enough. Therefore, we can have at most
\[
  \frac{\left(\frac{2\beta  G}{\mu \eta^y}\right)^{\frac{1}{1-\beta}}}{\frac{\mu \eta^y}{2 \beta} \left(v^y_{0}\right)^{1 - \beta}}
\]
constant number of iterations when the term is positive. When the term is positive, it is
also upper bounded by using the result from \Cref{eq:pe1}:
\begin{align*}
  \left( \left(v^y_{t+1}\right)^{\beta} - \frac{\mu \eta^y}{2} - \left(v^y_{t}\right)^{\beta} \right) \norm*{y_t - y^*_{t}}^2
  &\leq \frac{\beta \norm*{\nabla_y \widetilde{f}(x_{t}, y_t)}^2}{\left(v^y_{t}\right)^{1-\beta}} \norm*{y_t - y^*_{t}}^2 \\
  &\leq \frac{\beta  G^2}{\left(v^y_{0}\right)^{1-\beta}} \norm*{y_t - y^*_{t}}^2 \\
  &\leq \frac{\beta  G^2}{\mu^2 \left(v^y_{0}\right)^{1-\beta}} \norm*{\nabla_y f(x_t, y_t)}^2 \\
  &\leq \frac{\beta  G^4}{\mu^2 \left(v^y_{0}\right)^{1-\beta}} \\
\end{align*}
which is a constant. In total, \Refterm{term:td1} is bounded by
\[
  \frac{\left(2\beta G\right)^{\frac{1}{1 - \beta} + 2} G^2}{2
  \mu^{\frac{1}{1-\beta} + 3} \left(\eta^y\right)^{\frac{1}{1-\beta} + 1}\left(v^y_{0}\right)^{2 - 2\beta}}.
\]
Bringing it back, we get the desired result.
\end{proof}

\begin{lemma} \label{lemma:y_part}
  Under the same setting as \Cref{theorem:tiada_stoc},
  for any constant $C$, we have 
  \begin{align*}
    &\fakeeq \Ep{\sum_{t=0}^{T-1} \left( f(x_{t}, y^*_{t}) - f(x_{t}, y_t) \right)} \\
    &\leq \frac{2\rk^2 \left(\eta^x\right)^2}{\mu \left(\eta^y\right)^2 C^{2\alpha - 2\beta}}
       \Ep{ \sum_{t=0}^{T-1} \norm*{\nabla_x f(x_t, y_t)}^2}
    + \frac{\eta^y}{2(1-\beta)} \Ep{\left(v^y_{T}\right)^{1-\beta}} \\
    &\fakeeq+ \left(\frac{1}{\mu} + \frac{\eta^y }{\left(v^y_{0}\right)^{\beta}}\right) 
        \frac{4\rk\eta^x G^2 }{\eta^y \left(v^y_{0}\right)^{\alpha}} \Ep{ \left(v^y_{T}\right)^{\beta}} \\
    &\fakeeq+ \frac{\rk^2 \left(\mu \eta^y C^{\beta} + 2 C^{2\beta}\right) \left(\eta^x\right)^2}{2 \mu \left(\eta^y\right)^2 } 
 \Ep{  \frac{1 + \log v^x_{T} - \log v^x_{0}}{\left(v^x_{0}\right)^{2\alpha-1}} \cdot \mathbf{1}_{\alpha \geq 0.5} 
  +  \frac{\left(v_{T}^x\right)^{1-2\alpha}}{1 - 2\alpha} \cdot \mathbf{1}_{\alpha < 0.5}   } \\
    &\fakeeq+ \left(\rk^2 + \frac{\widehat{L}^2  G^2 \left(\eta^x\right)^2}{\mu \eta^y \left(v^y_{0}\right)^{2\alpha - \beta}}\right) 
        \frac{\left(\eta^x\right)^2 }{2 (1 - \alpha) \eta^y \left(v^y_{0}\right)^{\alpha - \beta} }
        \Ep{ \left(v^x_{T}\right)^{1-\alpha}} \\
    &\fakeeq+ \frac{\left(v^y_{0}\right)^{\beta} G^2}{2\mu^2 \eta^y}
    + \frac{\left(2\beta G\right)^{\frac{1}{1 - \beta} + 2} G^2}{
      4 \mu^{\frac{1}{1-\beta} + 3} \left(\eta^y\right)^{\frac{1}{1-\beta} + 2}\left(v^y_{0}\right)^{2 - 2\beta}}.
  \end{align*}
\end{lemma}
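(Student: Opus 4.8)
The plan is to pick $t_0$ to be the first iteration at which $v^y_{t_0+1}$ exceeds the given constant $C$, and then split the target sum $\sum_{t=0}^{T-1}\left(f(x_t,y^*_t)-f(x_t,y_t)\right)$ at that index. For the block $t=0,\dots,t_0-1$ we have $v^y_{t+1}\le C$ by the definition of $t_0$, so \Cref{lemma:v_small} applies with this $C$; for the block $t=t_0,\dots,T-1$ we invoke \Cref{lemma:v_large}. Adding the two resulting inequalities, the left-hand sides recombine into $\Ep{\sum_{t=0}^{T-1}\left(f(x_t,y^*_t)-f(x_t,y_t)\right)}$, which is exactly the quantity we wish to bound.

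The key structural observation is that the two lemmas produce matching telescoping-distance terms $\frac{1-\rg_t\mu}{2\rg_t}\norm*{y_t-y^*_t}^2-\frac{1}{\rg_t(2+\mu\rg_t)}\norm*{y_{t+1}-y^*_{t+1}}^2$ and matching stepsize-weighted gradient terms $\frac{\rg_t}{2}\norm*{\nabla_y\widetilde f(x_t,y_t)}^2$, so after addition each is summed over the full range $t=0,\dots,T-1$. The distance terms are then controlled by \Cref{lemma:telescope_distance}, contributing the two constants $\frac{(v^y_0)^\beta G^2}{2\mu^2\eta^y}$ and the term carrying the $\frac{1}{1-\beta}$ exponents. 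For the gradient term I would substitute $\rg_t=\eta^y/(v^y_{t+1})^\beta$ and apply \Cref{lemma:sum} to the sequence of squared dual-gradient norms, absorbing the initial value $v^y_0$ into the first summand; this yields $\sum_{t=0}^{T-1}\frac{\rg_t}{2}\norm*{\nabla_y\widetilde f(x_t,y_t)}^2\le\frac{\eta^y}{2(1-\beta)}(v^y_T)^{1-\beta}$, which is exactly the corresponding term in the claim.

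It remains to reconcile the two index-dependent residual terms. \Cref{lemma:v_small} contributes a bound involving $v^x_{t_0}$ (through $\log v^x_{t_0}$ when $\alpha\ge 0.5$ and through $(v^x_{t_0})^{1-2\alpha}$ when $\alpha<0.5$); since $v^x$ is nondecreasing we have $\log v^x_{t_0}\le\log v^x_T$ and, when $\alpha<0.5$, $(v^x_{t_0})^{1-2\alpha}\le(v^x_T)^{1-2\alpha}$, so replacing $v^x_{t_0}$ by $v^x_T$ only enlarges the bound and produces the stated expression in $v^x_T$. Likewise, \Cref{lemma:v_large} contributes $\frac{2\rk^2(\eta^x)^2}{\mu(\eta^y)^2 C^{2\alpha-2\beta}}\sum_{t=t_0}^{T-1}\norm*{\nabla_x f(x_t,y_t)}^2$; as every summand is nonnegative, extending the lower limit down to $t=0$ again only increases the bound and yields the $\sum_{t=0}^{T-1}$ form in the claim. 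The $(v^x_T)^{1-\alpha}$ and $(v^y_T)^\beta$ terms carry over from \Cref{lemma:v_large} verbatim, completing the assembly.

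The argument is essentially bookkeeping, since the analytic content lives in the three preceding lemmas; the only points needing care are the degenerate cases. If $v^y_{t+1}$ never surpasses $C$ then effectively $t_0=T$ and only \Cref{lemma:v_small} is used, the second block being empty, whereas if $t_0=0$ only \Cref{lemma:v_large} is used; in both cases the empty sums vanish and the combined bound remains valid. I expect the main (mild) obstacle to be verifying that the distance-telescope and the gradient terms emitted by the two lemmas align index-for-index, so that their sums fuse cleanly over $\{0,\dots,T-1\}$ before \Cref{lemma:telescope_distance} and \Cref{lemma:sum} are applied.
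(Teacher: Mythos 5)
Your proposal is correct and takes essentially the same route as the paper's proof: the paper likewise instantiates \Cref{lemma:v_small} and \Cref{lemma:v_large} at the first index $t_0$ with $v^y_{t_0+1} > C$, fuses the matching telescoping-distance and $\frac{\rg_t}{2}\|\nabla_y \widetilde{f}(x_t,y_t)\|^2$ terms over $t=0,\dots,T-1$, bounds the former by \Cref{lemma:telescope_distance} and the latter by \Cref{lemma:sum} to get $\frac{\eta^y}{2(1-\beta)}\mathbb{E}\left[(v^y_T)^{1-\beta}\right]$, and (implicitly) enlarges $v^x_{t_0}$ to $v^x_T$ and extends the $\|\nabla_x f(x_t,y_t)\|^2$ sum to the full range by monotonicity. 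Your explicit treatment of the degenerate cases $t_0 \in \{0, T\}$ is the only point the paper leaves tacit.
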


\begin{proof}
  By \Cref{lemma:v_small} and \Cref{lemma:v_large}, we have for any constant $C$,
\begin{align*}
&\fakeeq \Ep{\sum_{t=0}^{T-1} \left( f(x_{t}, y^*_{t}) - f(x_{t}, y_t) \right)} \\
&\leq \Ep{\sum_{t=0}^{T-1} \left( \frac{1 - \rg_t \mu}{2 \rg_t} \norm*{y_t - y^*_{t}}^2 - \frac{1}{\rg_t (2 + \mu \rg_t)} \norm*{y_{t+1} - y^*_{t+1}}^2 \right)} \\
&\fakeeq + \Ep{\sum_{t=0}^{T-1} \frac{\rg_t}{2} \norm*{\nabla_y \widetilde{f}(x_{t}, y_t)}^2}
+ \frac{2\rk^2 \left(\eta^x\right)^2}{\mu \left(\eta^y\right)^2 C^{2\alpha - 2\beta}}
   \Ep{ \sum_{t=0}^{T-1} \norm*{\nabla_x f(x_t, y_t)}^2} \\
&\fakeeq + \frac{\rk^2 \left(\mu \eta^y C^{\beta} + 2 C^{2\beta}\right) \left(\eta^x\right)^2}{2 \mu \left(\eta^y\right)^2 } 
 \Ep{  \frac{1 + \log v^x_{T} - \log v^x_{0}}{\left(v^x_{0}\right)^{2\alpha-1}} \cdot \mathbf{1}_{\alpha \geq 0.5} 
  +  \frac{\left(v_{T}^x\right)^{1-2\alpha}}{1 - 2\alpha} \cdot \mathbf{1}_{\alpha < 0.5}   } \\
&\fakeeq+ \left(\rk^2 + \frac{\widehat{L}^2  G^2 \left(\eta^x\right)^2}{\mu \eta^y \left(v^y_{0}\right)^{2\alpha - \beta}}\right) 
    \frac{\left(\eta^x\right)^2 }{2 (1 - \alpha) \eta^y \left(v^y_{0}\right)^{\alpha - \beta} }
    \Ep{ \left(v^x_{T}\right)^{1-\alpha}} \\
&\fakeeq
+ \left(\frac{1}{\mu} + \frac{\eta^y  }{\left(v^y_{0}\right)^{\beta}}\right) 
    \frac{4\rk\eta^xG^2}{\eta^y \left(v^y_{0}\right)^{\alpha}} \Ep{ \left(v^y_{T}\right)^{\beta}}.
\end{align*}
The first term can be bounded by \Cref{lemma:telescope_distance}.
For the second term, we have
\begin{align*}
  \Ep{\sum_{t=0}^{T-1} \frac{\rg_t}{2} \norm*{\nabla_y \widetilde{f}(x_{t}, y_t)}^2}
  &= \Ep{\sum_{t=0}^{T-1} \frac{\eta^y}{2\left(v^y_{t+1}\right)^{\beta}} \norm*{\nabla_y \widetilde{f}(x_{t}, y_t)}^2} \\
  &\leq \frac{\eta^y}{2} \Ep{\frac{v^y_{0}}{\left(v^y_{0}\right)^{\beta}} + \sum_{t=0}^{T-1} \frac{1}{\left(v^y_{t+1}\right)^{\beta}} \norm*{\nabla_y \widetilde{f}(x_{t}, y_t)}^2} \\
  &\leq \frac{\eta^y}{2(1-\beta)} \Ep{\left(v^y_{T}\right)^{1-\beta}},
\end{align*}
where the last inequality follows from \Cref{lemma:sum}.
Then the proof is completed.
\end{proof}

\subsection{Proof of Theorem~\ref{theorem:tiada_stoc}}
\label{sec:proof_stoc}

We present a formal version of \Cref{theorem:tiada_stoc}.
\begin{theorem}[stochastic setting] \label{theorem:tiada_stoc_formal}
  Under \Cref{assume:strong-convex,assume:smoothness,assume:stochastic_grad,assume:lipschitz,assume:interior_optimal,assume:bound_func_val},
  \Cref{alg:tiada} with stochastic gradient
  oracles satisfies that for any $0 < \beta < \alpha < 1$, after $T$ iterations,
  \begin{align*}
  &\fakeeq \Ep{\frac{1}{T}\sum_{t=0}^{T-1}\norm*{\nabla_x f(x_t, y_t)}^2}  \\
  &\leq \frac{4 \Delta \Phi G^{2\alpha}}{\eta^x T^{1-\alpha}} 
  + \left( \frac{4 l\rk \eta^x}{1- \ra} 
  + \left(\rk^2 + \frac{\widehat{L}^2  G^2 \left(\eta^x\right)^2}{\mu \eta^y \left(v^y_{0}\right)^{2\alpha - \beta}}\right) 
  \frac{2 l \rk \left(\eta^x\right)^2 }{(1 - \alpha) \eta^y \left(v^y_{0}\right)^{\alpha - \beta} } \right)
  \frac{G^{2(1-\alpha)}}{T^{\alpha}} \\
  &\fakeeq + \frac{2 l \rk \eta^y G^{2(1-\beta)}}{(1-\beta)T^{\beta}}
  + \left(\frac{1}{\mu} + \frac{\eta^y  }{\left(v^y_{0}\right)^{\beta}}\right) 
  \frac{16 l \rk^2 \eta^xG^{2(1+\beta)}}{\eta^y \left(v^y_{0}\right)^{\alpha} T^{1-\beta}}  \\
  &\fakeeq + \frac{2 \rk^4 \left(\mu \eta^y C^{\beta} + 2 C^{2\beta}\right) \left(\eta^x\right)^2}{\left(\eta^y\right)^2 } 
  \left(  \frac{1 + \log (G^2 T) - \log v^x_{0}}{\left(v^x_{0}\right)^{2\alpha-1} T} \cdot \mathbf{1}_{\alpha \geq 0.5} 
  +  \frac{G^{2(1-2\alpha)} }{(1 - 2\alpha) T^{2\alpha}} \cdot \mathbf{1}_{\alpha < 0.5}    \right) \\
  &\fakeeq
  + \frac{2 \rk^2 \left(v^y_{0}\right)^{\beta} G^2}{\mu \eta^y T}
  + \frac{l \rk \left(2\beta G\right)^{\frac{1}{1 - \beta} + 2} G^2}{
    \mu^{\frac{1}{1-\beta} + 3} \left(\eta^y\right)^{\frac{1}{1-\beta} + 2}\left(v^y_{0}\right)^{2 - 2\beta} T},
\end{align*}
and
\begin{align*}
  &\fakeeq \Ep{\frac{1}{T}\sum_{t=0}^{T-1}\norm*{\nabla_y f(x_t, y_t)}^2} \\
    &\leq \frac{4 \rk^3 \left(\eta^x\right)^2}{\left(\eta^y\right)^2 C^{2\alpha - 2\beta}}
       \Ep{ \frac{1}{T} \sum_{t=0}^{T-1} \norm*{\nabla_x f(x_t, y_t)}^2}
       + \frac{l\eta^y G^{2-2\beta}}{(1-\beta)T^{\beta}}
    + \left(\frac{1}{\mu} + \frac{\eta^y  }{\left(v^y_{0}\right)^{\beta}}\right) 
    \frac{8l\rk\eta^x G^{2+2\beta}}{\eta^y \left(v^y_{0}\right)^{\alpha}T^{1-\beta}}  \\
    &\fakeeq+ \frac{\rk^3 \left(\mu \eta^y C^{\beta} + 2 C^{2\beta}\right) \left(\eta^x\right)^2}{ \left(\eta^y\right)^2} 
    \left(  \frac{1 + \log TG^2 - \log v^x_{0}}{\left(v^x_{0}\right)^{2\alpha-1} T} \cdot \mathbf{1}_{\alpha \geq 0.5} 
    +  \frac{G^{2-4\alpha}}{(1 - 2\alpha)T^{2\alpha}} \cdot \mathbf{1}_{\alpha < 0.5}   \right) \\
    &\fakeeq+ \left(\rk^2 + \frac{\widehat{L}^2  G^2 \left(\eta^x\right)^2}{\mu \eta^y \left(v^y_{0}\right)^{2\alpha - \beta}}\right) 
    \frac{l \left(\eta^x\right)^2 G^{2 - 2\alpha}}{(1 - \alpha) \eta^y \left(v^y_{0}\right)^{\alpha - \beta} T^{\alpha}}
    + \frac{\rk \left(v^y_{0}\right)^{\beta} G^2}{\mu \eta^y T}
    + \frac{2 l \left(2\beta G\right)^{\frac{1}{1 - \beta} + 2} G^2}{
      4  \mu^{\frac{1}{1-\beta} + 3} \left(\eta^y\right)^{\frac{1}{1-\beta} + 2}\left(v^y_{0}\right)^{2 - 2\beta} T}.
\end{align*}
\end{theorem}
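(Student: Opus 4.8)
The plan is to couple a primal descent on $\Phi$ with the dual gap control already packaged in \Cref{lemma:y_part}, and then close the loop using the uniform gradient bound. First I would invoke the $(l+\kappa l)$-smoothness of $\Phi$ from \Cref{lemma:lip_y_star} to write the one-step inequality
\[
\Phi(x_{t+1}) \le \Phi(x_t) - \eta_t\langle\nabla\Phi(x_t),\nabla_x\widetilde f(x_t,y_t)\rangle + \frac{l+\kappa l}{2}\,\eta_t^2\,\|\nabla_x\widetilde f(x_t,y_t)\|^2 ,
\]
and decompose $\nabla\Phi(x_t)=\nabla_x f(x_t,y_t)+(\nabla\Phi(x_t)-\nabla_x f(x_t,y_t))$, bounding the approximation error by $\|\nabla\Phi(x_t)-\nabla_x f(x_t,y_t)\|\le l\|y_t-y^*_t\|$ exactly as in the deterministic proof. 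This produces a descent term $-\tfrac12\eta_t\|\nabla_x f(x_t,y_t)\|^2$, a quadratic stepsize term, a stochastic inner product $-\eta_t\langle\nabla\Phi(x_t),\nabla_x\widetilde f-\nabla_x f\rangle$, and a coupling term governed by $\|y_t-y^*_t\|^2$.

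The first hard part is the stochastic inner product: since $\eta_t$ depends on $v^x_{t+1}$ and hence on the sample $\xi^x_t$, its conditional mean does not decouple from the noise. I would handle it exactly as the term $m_t$ is handled in the proof of \Cref{lemma:v_large}: split $\eta_t=\eta_{t-1}+(\eta_t-\eta_{t-1})$, observe that $\eta_{t-1}$ is measurable before $\xi^x_t$ is drawn so its contribution vanishes in conditional expectation, and control the residual by the telescoping sum of the monotone increments, $\sum_t(\eta_{t-1}-\eta_t)\le\eta_0\le\eta^x/(v^y_0)^\alpha$, times the uniform noise bound $G$. The quadratic term $\sum_t\eta_t^2\|\nabla_x\widetilde f\|^2$ is then bounded by \Cref{lemma:sum}, producing the $(v^x_T)^{1-2\alpha}$ or logarithmic factor.

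Next I would route the coupling term through strong concavity, $\tfrac{\mu}{2}\|y_t-y^*_t\|^2\le f(x_t,y^*_t)-f(x_t,y_t)$, and bound the resulting gap sum by \Cref{lemma:y_part}. The subtlety, and the second main obstacle, is that \Cref{lemma:y_part} re-injects the primal quantity through its term $\tfrac{2\kappa^2(\eta^x)^2}{\mu(\eta^y)^2 C^{2\alpha-2\beta}}\,\mathbb{E}\sum_t\|\nabla_x f(x_t,y_t)\|^2$, which originates from the Stage-II coupling in \Cref{lemma:v_large} where $v^y_{t+1}>C$ forces $(v^y_{t+1})^{2\beta-2\alpha}\le C^{2\beta-2\alpha}$. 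I would take the crossing threshold $C$, i.e.\ the time-scale-separation level, as a sufficiently large constant so that this feedback coefficient is small, and then merge the re-injected primal-gradient sum back into the descent term. The constant $C$ survives only in the Stage-I contributions coming from \Cref{lemma:v_small}, evaluated with $v^x_{t_0}\le v^x_T$.

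Finally I would use $\|g^z_t\|\le G$ to cap $v^x_T\le v^x_0+G^2T$ and $v^y_T\le v^y_0+G^2T$, substitute these into every residual $(v^x_T)^{1-\alpha}$, $(v^y_T)^{1-\beta}$, $(v^y_T)^{\beta}$ factor, lower-bound the descent term via $\eta_t\ge\eta^x/\max\{v^x_T,v^y_T\}^\alpha\ge\eta^x/(v_0+G^2T)^\alpha$, divide by $T$, and collect the powers into the advertised $T^{\alpha-1},T^{-\alpha},T^{\beta-1},T^{-\beta}$ terms. For the dual stationarity I would use interior optimality $\nabla_y f(x_t,y^*_t)=0$ together with smoothness to get $\|\nabla_y f(x_t,y_t)\|\le l\|y_t-y^*_t\|$, hence $\sum_t\|\nabla_y f\|^2\le 2l\kappa\sum_t\big(f(x_t,y^*_t)-f(x_t,y_t)\big)$, and then apply \Cref{lemma:y_part} together with the already-established bound on $\tfrac1T\sum_t\|\nabla_x f\|^2$, which is exactly why that quantity reappears on the right-hand side of the second inequality. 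I expect the stochastic decorrelation and the self-consistent choice of $C$ that absorbs the primal--dual feedback while keeping the Stage-I residuals at the right order to be the steps demanding the most care.
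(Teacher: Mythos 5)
You have the right scaffolding -- primal descent on $\Phi$, the gap control of \Cref{lemma:y_part}, the martingale split for a stepsize-correlated noise term (which the paper does use, though only inside the dual recursion of \Cref{lemma:v_large}; your placement of it in the primal step would also be valid and cost only an $O(1)$ additive term) -- but your plan for closing the primal--dual feedback loop fails quantitatively, and this is a genuine gap. In your decomposition the descent term keeps its stepsize weight, $\frac{1}{2}\sum_t \eta_t\left\|\nabla_x f(x_t,y_t)\right\|^2$, and under bounded gradients the best uniform lower bound on that weight is $\eta_t \ge \eta^x/(v_0+G^2T)^\alpha = \Theta(T^{-\alpha})$. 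The re-injected primal term from \Cref{lemma:y_part} carries the $T$-independent coefficient $\frac{2\rk^2(\eta^x)^2}{\mu(\eta^y)^2 C^{2\alpha-2\beta}}$ (times the constant you pick up when pulling $\eta_t\le\eta^x/(v^y_0)^\alpha$ out of the coupling sum, since that lemma bounds the \emph{unweighted} gap sum). No fixed constant $C$, however large, lets a constant coefficient be dominated by a weight decaying like $T^{-\alpha}$; and if you instead let $C$ grow with $T$ (you would need $C^{2\alpha-2\beta}\gtrsim T^{\alpha}$), the Stage-I contribution from \Cref{lemma:v_small}, which scales with $C^{\beta}$ and $C^{2\beta}$, blows up past the advertised rate. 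Even setting the feedback aside, renormalizing your weighted inequality by $\eta_{T-1}^{-1}=\Theta(T^{\alpha})$ inflates the dual residual $\Theta(T^{1-\beta})$ coming from \Cref{lemma:y_part} into $\Theta(T^{\alpha-\beta})$ after dividing by $T$, which diverges because $\alpha>\beta$, instead of producing the claimed $T^{-\beta}$-type term.

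The paper's proof avoids all of this with one move your proposal is missing: it divides the one-step descent inequality by $\eta_t$ \emph{before} telescoping. This makes the descent term the unweighted $\frac{1}{2}\left\|\nabla_x f(x_t,y_t)\right\|^2$, so the constant choice $C=\left(8l\rk^3(\eta^x)^2/(\mu(\eta^y)^2)\right)^{1/(2\alpha-2\beta)}$ pins the feedback coefficient $2l\rk\cdot\frac{2\rk^2(\eta^x)^2}{\mu(\eta^y)^2C^{2\alpha-2\beta}}$ at exactly $\frac{1}{2}$ and absorption works uniformly in $T$; it also renders the inner-product term $\left\langle\nabla\Phi(x_t),\nabla_x\widetilde f(x_t,y_t)\right\rangle$ free of the random stepsize, so conditional unbiasedness holds with no martingale argument in the primal part at all. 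The price is the term $\sum_t\left(\Phi(x_t)-\Phi(x_{t+1})\right)/\eta_t$, which is handled by Abel summation using the monotonicity of $1/\eta_t$ and the bound $\Phi\le\Phi_{\max}$, yielding $\Delta\Phi/\eta_{T-1}\le \Delta\Phi\max\{v^x_T,v^y_T\}^\alpha/\eta^x = O(T^{\alpha})$ -- this is precisely where \Cref{assume:bound_func_val} enters, an assumption your plan never invokes, which is a signal the route cannot be the intended one. Your treatment of the second (dual) inequality via $\left\|\nabla_y f(x_t,y_t)\right\|\le l\left\|y_t-y^*_t\right\|$ and \Cref{lemma:y_part}, and the final capping $v^z_T\le TG^2$, do match the paper.
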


\begin{proof}
By smoothness of the primal function, we have
\begin{align*}
  \Phi(x_{t+1}) - \Phi(x_{t}) \leq - \eta_t 
    \inp*{\nabla \Phi(x_{t})}{\nabla_x \widetilde{f}(x_t, y_t)}
    + l\rk \eta_t^2 \norm*{\nabla_x \widetilde{f}(x_t, y_t)}^2.
\end{align*}
By multiplying $\frac{1}{\eta_t}$ on both sides and taking the expectation
w.r.t. the noise of current iteration, we have
\begin{align*}
  &\fakeeq \Ep{\frac{\Phi(x_{t+1}) - \Phi(x_{t})}{\eta_t}}  \\
  &\leq -
    \inp*{\nabla \Phi(x_{t})}{\nabla_x f(x_t, y_t)}
    + l\rk \Ep{\eta_t \norm*{\nabla_x \widetilde{f}(x_t, y_t)}^2} \\
  &= -\norm*{\nabla_x f(x_t, y_t)}^2 
    + \inp*{\nabla_x f(x_t, y_t) - \nabla \Phi(x_{t})}{\nabla_x f(x_t, y_t)}
    + l\rk \Ep{\eta_t \norm*{\nabla_x \widetilde{f}(x_t, y_t)}^2} \\
  &\leq -\norm*{\nabla_x f(x_t, y_t)}^2 
    + \frac{1}{2}\norm*{\nabla_x f(x_t, y_t) - \nabla \Phi(x_{t})}^2 + \frac{1}{2}\norm*{\nabla_x f(x_t, y_t)}^2
    + l\rk \Ep{\eta_t \norm*{\nabla_x \widetilde{f}(x_t, y_t)}^2} \\
  &= -\frac{1}{2}\norm*{\nabla_x f(x_t, y_t)}^2 
    + \frac{1}{2}\norm*{\nabla_x f(x_t, y_t) - \nabla \Phi(x_{t})}^2
    + l\rk \Ep{\eta_t \norm*{\nabla_x \widetilde{f}(x_t, y_t)}^2} \\
\end{align*}
Summing over $t=0$ to $T-1$, rearranging and taking total expectation, we get 
\begin{align*}
  &\fakeeq \Ep{\sum_{t=0}^{T-1}\norm*{\nabla_x f(x_t, y_t)}^2}  \\
  &\leq \underbrace{2 \Ep{\sum_{t=0}^{T-1}\frac{\Phi(x_{t}) - \Phi(x_{t+1})}{\eta_t}}}_{\text{\labelterm{term:31}}}
  + \underbrace{2 l\rk \Ep{\sum_{t=0}^{T-1}\eta_t \norm*{\nabla_x \widetilde{f}(x_t, y_t)}^2}}_{\text{\labelterm{term:32}}}
  + \underbrace{\Ep{\sum_{t=0}^{T-1}\norm*{\nabla_x f(x_t, y_t) - \nabla \Phi(x_{t})}^2}}_{\text{\labelterm{term:33}}}.
  \numberthis \label{eq:stoch_smooth}
\end{align*}

\paragraph{\Refterm{term:31}}
\begin{align*}
  2 \Ep{\sum_{t=0}^{T-1}\frac{\Phi(x_{t}) - \Phi(x_{t+1})}{\eta_t}}
  &\leq 2 \Ep{\frac{\Phi(x_{0})}{\eta_0} - \frac{\Phi(x_T)}{\eta_{T-1}} + \sum_{t=1}^{T-1} \Phi(x_{t})
  \left( \frac{1}{\eta_{t}} - \frac{1}{\eta_{t-1}}\right)} \\
  &\leq 2 \Ep{\frac{\Phi_{\max}}{\eta_0} - \frac{\Phi^*}{\eta_{T-1}} + \sum_{t=1}^{T-1} \Phi_{\max}
  \left( \frac{1}{\eta_{t}} - \frac{1}{\eta_{t-1}}\right)} \\
  &= 2 \Ep{\frac{\Delta \Phi}{\eta_{T-1}}} 
  = 2 \Ep{\frac{\Delta \Phi}{\eta^x} \max\left\{ v^x_{T}, v^y_{T} \right\}^{\ra} }.
\end{align*}

\paragraph{\Refterm{term:32}}
\begin{align*}
  2 l\rk \sum_{t=0}^{T-1}\Ep{\eta_t \norm*{\nabla_x \widetilde{f}(x_t, y_t)}^2}
  &= 2 l\rk \Ep{\sum_{t=0}^{T-1} \frac{\eta^x}{\max\left\{v^x_{t+1}, v^y_{t+1}\right\}^{\alpha}} \norm*{\nabla_x \widetilde{f}(x_t, y_t)}^2} \\
  &\leq 2 l\rk \eta^x \Ep{\sum_{t=0}^{T-1} \frac{1}{\left(v^x_{t+1}\right)^{\alpha}} \norm*{\nabla_x \widetilde{f}(x_t, y_t)}^2} \\
  &\leq 2 l\rk \eta^x \Ep{ \left(\frac{v^x_{0}}{\left(v^x_{0}\right)^{\alpha}} + \sum_{t=0}^{T-1} \frac{1}{\left(v^x_{t+1}\right)^{\alpha}} \norm*{\nabla_x \widetilde{f}(x_t, y_t)}^2\right)} \\
  &\leq  \frac{2 l\rk \eta^x}{1- \ra} \Ep{  \left(v^x_{T}\right)^{1-\alpha} }.
\end{align*}

\paragraph{\Refterm{term:33}}
According to the smoothness of $f(x_t, \cdot)$, we have
\begin{align*}
  \Ep{\sum_{t=0}^{T-1}\norm*{\nabla_x f(x_t, y_t) - \nabla \Phi(x_{t})}^2}
  \leq l^2 \Ep{ \sum_{t=0}^{T-1}\norm*{y_t - y^*_t}^2}
  \leq 2 l\rk \Ep{\sum_{t=0}^{T-1} \left(f(x_{t}, y^*_{t}) - f(x_{t}, y_{t})\right)},
\end{align*}
where the last inequality follows the strong-concavity of $y$.
Now we let 
\begin{align*}
  C = \left( \frac{8 l \rk^3 \left(\eta^x\right)^2}{\mu \left(\eta^y\right)^2} \right)^{\frac{1}{2\alpha - 2\beta}},
\end{align*}
and apply \Cref{lemma:y_part}, in total, we have
\begin{align*}
  &\fakeeq \Ep{\sum_{t=0}^{T-1}\norm*{\nabla_x f(x_t, y_t)}^2}  \\
  &\leq \frac{1}{2} \Ep{ \sum_{t=0}^{T-1} \norm*{\nabla_x f(x_t, y_t)}^2}
  + 2 \Ep{\frac{\Delta \Phi}{\eta^x} \max\left\{ v^x_{T}, v^y_{T} \right\}^{\ra} }
  + \frac{2 l\rk \eta^x}{1- \ra} \Ep{  \left(v^x_{T}\right)^{1-\alpha} } \\
  &\fakeeq + \frac{l \rk \eta^y}{1-\beta} \Ep{\left(v^y_{T}\right)^{1-\beta}}
  + \left(\frac{1}{\mu} + \frac{\eta^y  }{\left(v^y_{0}\right)^{\beta}}\right) 
      \frac{8 l \rk^2 \eta^xG^2}{\eta^y \left(v^y_{0}\right)^{\alpha}} \Ep{ \left(v^y_{T}\right)^{\beta}} \\
  &\fakeeq + \frac{\rk^4 \left(\mu \eta^y C^{\beta} + 2 C^{2\beta}\right) \left(\eta^x\right)^2}{\left(\eta^y\right)^2 } 
 \Ep{  \frac{1 + \log v^x_{T} - \log v^x_{0}}{\left(v^x_{0}\right)^{2\alpha-1}} \cdot \mathbf{1}_{\alpha \geq 0.5} 
  +  \frac{\left(v_{T}^x\right)^{1-2\alpha}}{1 - 2\alpha} \cdot \mathbf{1}_{\alpha < 0.5}   } \\
  &\fakeeq+ \left(\rk^2 + \frac{\widehat{L}^2  G^2 \left(\eta^x\right)^2}{\mu \eta^y \left(v^y_{0}\right)^{2\alpha - \beta}}\right) 
      \frac{l \rk \left(\eta^x\right)^2 }{(1 - \alpha) \eta^y \left(v^y_{0}\right)^{\alpha - \beta} }
      \Ep{ \left(v^x_{T}\right)^{1-\alpha}}
  + \frac{\rk^2 \left(v^y_{0}\right)^{\beta} G^2}{\mu \eta^y} \\
  &\fakeeq + \frac{l \rk \left(2\beta G\right)^{\frac{1}{1 - \beta} + 2} G^2}{
    2 \mu^{\frac{1}{1-\beta} + 3} \left(\eta^y\right)^{\frac{1}{1-\beta} + 2}\left(v^y_{0}\right)^{2 - 2\beta}}.
\end{align*}

It remains to bound $\left(v^z_{T}\right)^{m}$ for $z \in \{x, y\}$
and $m \geq 0$:
\begin{align*}
  \left(v^z_{T}\right)^{m} \leq \left(T G^2\right)^m.
\end{align*}
Bringing it back, we conclude our proof.
The bound of $\Ep{\frac{1}{T}\sum_{t=0}^{T-1}\norm*{\nabla_y f(x_t, y_t)}^2}$
can be implied by \Cref{lemma:y_part}.
  
\end{proof}

\subsection{TiAda without Accessing Opponent's Gradients}
\label{sec:no_max}

The effective stepsize of $x$ requires the knowledge of gradients of $y$,
i.e., $v_{t+1}^y$.
At the end of \Cref{sec:theory}, we discussed the situation when such information is
not available. Now we formally introduce the algorithm and present the convergence result.

\begin{algorithm}[ht] 
    \caption{TiAda without MAX}
    \setstretch{1.25}
    \begin{algorithmic}[1]
      \STATE \textbf{Input:} $(x_0, y_0)$, $v^x_0 > 0$, $v^y_0 > 0$, $\eta^x > 0$,
          $\eta^y > 0$, $\alpha > 0$, $\beta > 0$ and $\alpha > \beta$.
        \FOR{$t = 0,1,2,...$}
            \STATE sample i.i.d. $\xi^x_t$ and $\xi^y_t$, and let $g_t^x = \nabla_x F(x_t, y_t; \xi^x_t)$ and $g_t^y = \nabla_y F(x_t, y_t; \xi^y_t)$ 
            \STATE $v_{t+1}^x = v_t^x + \norm*{g_t^x}^2$ and $v_{t+1}^y = v_t^y + \norm*{g_t^y}^2$ 
            \STATE $x_{t+1} = x_t - \frac{\eta^x}{\left(v^x_{t+1}\right)^{\alpha}} g^x_{t}$ and
            $ y_{t+1} = \cP_{\cY}\left( y_t + \frac{\eta^y}{\left(v^y_{t+1}\right)^{\beta}} g^y_{t} \right)$
        \ENDFOR
    \end{algorithmic} \label{alg:tiada_wo_max}
\end{algorithm}

\begin{theorem}[stochastic]
   Under \Cref{assume:strong-convex,assume:smoothness,assume:stochastic_grad,assume:bound_func_val},
  \Cref{alg:tiada_wo_max} with stochastic gradient
  oracles satisfies that for any $0 < \beta < \alpha < 1$, after $T$ iterations,
  \begin{align*}
  &\fakeeq \Ep{\frac{1}{T} \sum_{t=0}^{T-1}\norm*{\nabla_x f(x_t, y_t)}^2} \\
  &\leq \frac{2 \Delta \Phi G^{2\alpha} }{ \eta^x T^{1-\alpha}}
  + \frac{2 l\rk \eta^x G^{2-2\alpha}}{(1- \ra) T^{\alpha}}
  + \left( \frac{\left(v^y_{0}\right)^{\beta} G^2}{2\mu^2 \eta^y}
    + \frac{\left(2\beta G\right)^{\frac{1}{1 - \beta} + 2} G^2}{
      4 \mu^{\frac{1}{1-\beta} + 3} \left(\eta^y\right)^{\frac{1}{1-\beta} + 2}\left(v^y_{0}\right)^{2 - 2\beta}} \right) \frac{1}{T}
  + \frac{\eta^y G^{2-2\beta} }{2(1 - \beta) T^{\beta}} \\
  &\fakeeq + \left(\frac{\left(\eta^x\right)^2\rk^2}{2\left(v^y_{0}\right)^{\beta}\eta^y} + \frac{\left(\eta^x\right)^2\rk^2}{\mu(\eta^y)^2}\right)
  \left(\frac{\left(1 + \log G^2T - \log v^x_{0}\right) G^{4\beta} }{\left(v^x_{0}\right)^{2\alpha-1}T^{1-2\beta}} \cdot \mathbf{1}_{\alpha \geq 0.5}
  +  \frac{G^{2-4\alpha + 4\beta}}{(1 - 2\alpha)T^{2\alpha-2\beta}} \cdot \mathbf{1}_{\alpha < 0.5}\right),
  \end{align*}
  and
  \begin{align*}
  &\fakeeq \Ep{\frac{1}{T}\sum_{t=0}^{T-1}\norm*{\nabla_y f(x_t, y_t)}^2}  \\
  &\leq \left( \frac{\rk \left(v^y_{0}\right)^{\beta} G^2}{\mu \eta^y}
    + \frac{2l\left(2\beta G\right)^{\frac{1}{1 - \beta} + 2} G^2}{
      4 \mu^{\frac{1}{1-\beta} + 3} \left(\eta^y\right)^{\frac{1}{1-\beta} + 2}\left(v^y_{0}\right)^{2 - 2\beta}} \right) \frac{1}{T}
  + \frac{l \eta^y G^{2-2\beta} }{(1 - \beta) T^{\beta}} \\
  &\fakeeq + \left(\frac{l\left(\eta^x\right)^2\rk^2}{\left(v^y_{0}\right)^{\beta}\eta^y} + \frac{2\left(\eta^x\right)^2\rk^3}{(\eta^y)^2}\right)
  \left(\frac{\left(1 + \log G^2T - \log v^x_{0}\right) G^{4\beta} }{\left(v^x_{0}\right)^{2\alpha-1}T^{1-2\beta}} \cdot \mathbf{1}_{\alpha \geq 0.5}
  +  \frac{G^{2-4\alpha + 4\beta}}{(1 - 2\alpha)T^{2\alpha-2\beta}} \cdot \mathbf{1}_{\alpha < 0.5}\right).
  \end{align*}
\end{theorem}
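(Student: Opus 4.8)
The plan is to follow the same template as the proof of \Cref{theorem:tiada_stoc}, exploiting the $(l+\rk l)$-smoothness of the primal function $\Phi$ from \Cref{lemma:lip_y_star}, but with the simpler effective stepsize $\eta_t = \eta^x/(v^x_{t+1})^{\alpha}$ in place of the max-based one. First I would write the descent inequality $\Phi(x_{t+1})-\Phi(x_t) \le -\eta_t\langle \nabla\Phi(x_t),\nabla_x\widetilde f(x_t,y_t)\rangle + l\rk\,\eta_t^2\|\nabla_x\widetilde f(x_t,y_t)\|^2$, divide by $\eta_t$, take the conditional expectation so that $\mathbb{E}[\nabla_x\widetilde f]=\nabla_x f$, and apply Young's inequality to obtain, after summing over $t$ and taking total expectation,
\[
\mathbb{E}\sum_{t=0}^{T-1}\|\nabla_x f(x_t,y_t)\|^2 \le 2\,\mathbb{E}\sum_t \tfrac{\Phi(x_t)-\Phi(x_{t+1})}{\eta_t} + 2l\rk\,\mathbb{E}\sum_t \eta_t\|\nabla_x\widetilde f\|^2 + \mathbb{E}\sum_t\|\nabla_x f - \nabla\Phi(x_t)\|^2 .
\]

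The first term telescopes against the monotonically increasing $1/\eta_t$ and, using the boundedness of $\Phi$ (\Cref{assume:bound_func_val}), collapses to $2\Delta\Phi/\eta_{T-1} = 2\tfrac{\Delta\Phi}{\eta^x}(v^x_T)^{\alpha}$. The second term is exactly of the form handled by \Cref{lemma:sum}, giving $\tfrac{2l\rk\eta^x}{1-\alpha}(v^x_T)^{1-\alpha}$. The third, approximation-error term I would bound by $l^2\sum\|y_t-y^*_t\|^2$ via smoothness in $y$ (using $\nabla\Phi(x_t)=\nabla_x f(x_t,y^*_t)$) and then by $2l\rk\sum (f(x_t,y^*_t)-f(x_t,y_t))$ via strong concavity, reducing everything to controlling the cumulative dual suboptimality gap.

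The dual gap is where this proof diverges from \Cref{theorem:tiada_stoc}. Since \Cref{assume:lipschitz} is not available, $y^*(\cdot)$ is only Lipschitz (\Cref{lemma:lip_y_star}) and not smooth, so I cannot perform the refined split that isolates a true-gradient term carrying the cancelling factor $1/C^{2\alpha-2\beta}$. Instead I would use the plain Young inequality $\|y_{t+1}-y^*_{t+1}\|^2\le(1+\lambda_t)\|y_{t+1}-y^*_t\|^2+(1+1/\lambda_t)\|y^*_{t+1}-y^*_t\|^2$ with $\lambda_t\propto 1/(v^y_{t+1})^{\beta}$, combine it with the one-step strongly-concave contraction of the projected ascent step, and use $\|y^*_{t+1}-y^*_t\|^2\le\rk^2\eta_t^2\|\nabla_x\widetilde f\|^2$. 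Telescoping and applying \Cref{lemma:sum} to the dual accumulator then produces a bound whose dominant contribution is a weighted sum $\sum_t \tfrac{(v^y_{t+1})^{2\beta}}{(v^x_{t+1})^{2\alpha}}\|\nabla_x\widetilde f(x_t,y_t)\|^2$ with coefficient of order $(\eta^x)^2\rk^2$.

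The crux — and the source of the suboptimal $\widetilde{\mathcal O}(\epsilon^{-6})$ rate — is that without the $\max$ operator the factor $(v^y_{t+1})^{2\beta}$ no longer lives inside a common $\max\{v^x_{t+1},v^y_{t+1}\}^{2\alpha}$ denominator, so it cannot be cancelled against $(v^y_{t+1})^{2\alpha}$. The only available control is the crude $(v^y_{t+1})^{2\beta}\le (G^2T)^{2\beta}$ from the bounded gradients (\Cref{assume:stochastic_grad}); pulling this out and applying \Cref{lemma:sum} to $\sum_t \|\nabla_x\widetilde f\|^2/(v^x_{t+1})^{2\alpha}$ produces the $G^{4\beta}$ factor and the $T^{2\beta}$ growth visible in the last line of the statement. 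After collecting all terms, bounding $(v^z_T)^m\le(G^2T)^m$ for $z\in\{x,y\}$, and dividing by $T$, the claimed $x$-bound follows; the $y$-bound is then read off from the same dual-gap estimate together with $\|\nabla_y f(x_t,y_t)\|^2\le 2l\rk\,(f(x_t,y^*_t)-f(x_t,y_t))$, which converts the cumulative function gap into cumulative dual gradient norms. I expect the main obstacle to be precisely this lost cancellation: tracking the exponents so that the $2\beta$ from $(v^y_T)^{2\beta}$ and the $1-2\alpha$ from \Cref{lemma:sum} (or its logarithmic $\alpha\ge 1/2$ branch) combine into the stated $T^{2\alpha-2\beta}$ and $T^{1-2\beta}$ scalings, and verifying that the self-referential $\sum\|\nabla_x f\|^2$ does not re-enter with a $T$-growing coefficient that would break the argument.
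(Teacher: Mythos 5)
Your proposal follows the paper's proof essentially step for step: the same three-term decomposition of the primal descent inequality (telescoping against $\Phi_{\max}$, \Cref{lemma:sum} for the stepsize-weighted term, and smoothness plus strong concavity to reduce the error term to the cumulative dual gap), the same treatment of the dual gap via \Cref{lemma:basic_strongly_convex_expand,lemma:telescope_distance} with plain Young's inequality and only the Lipschitzness of $y^*(\cdot)$ from \Cref{lemma:lip_y_star}, and the same crude bound $(v^y_{t+1})^{2\beta}\le (G^2T)^{2\beta}$ that, exactly as you diagnose, is the lost cancellation responsible for the $T^{1-2\beta}$ and $T^{2\alpha-2\beta}$ terms. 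The only nit is your final conversion $\|\nabla_y f(x_t,y_t)\|^2\le 2l\rk\left(f(x_t,y^*_t)-f(x_t,y_t)\right)$: the paper instead uses the sharper smooth-concavity inequality $\|\nabla_y f(x_t,y_t)\|^2\le 2l\left(f(x_t,y^*_t)-f(x_t,y_t)\right)$, so your route yields the same rates in $T$ but inflates the constants in the stated $y$-bound by a factor of $\rk$.
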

\begin{remark}
  The best rate achievable is $\cOt\left(\epsilon^{-6}\right)$ by choosing
  $\alpha=1/2$ and $\beta=1/3$.
\end{remark}
\begin{proof}
\Cref{lemma:basic_strongly_convex_expand,lemma:telescope_distance}
can be directly used here because they do not have or expand the effective
stepsize of $x$, i.e., $\eta_t$. This is also the case for the beginning part
of \Cref{sec:proof_stoc}, the proof of \Cref{theorem:tiada_stoc}, up to
\Cref{eq:stoch_smooth}. However, we need to bound Terms~\eqref{term:31},
\eqref{term:32} and \eqref{term:33} in \Cref{eq:stoch_smooth} differently.
According to our assumption on bounded stochastic gradients,
we know that $v^x_{T}$ and $v^y_{T}$
are both upper bounded by $TG^2$, which we will use throughout the proof.

\paragraph{\Refterm{term:31}}
\begin{align*}
  2 \Ep{\sum_{t=0}^{T-1}\frac{\Phi(x_{t}) - \Phi(x_{t+1})}{\eta_t}}
  &\leq 2 \Ep{\frac{\Phi(x_{0})}{\eta_0} - \frac{\Phi(x_T)}{\eta_{T-1}} + \sum_{t=1}^{T-1} \Phi(x_{t})
  \left( \frac{1}{\eta_{t}} - \frac{1}{\eta_{t-1}}\right)} \\
  &\leq 2 \Ep{\frac{\Phi_{\max}}{\eta_0}  - \frac{\Phi^*}{\eta_{T-1}} + \sum_{t=1}^{T-1} \Phi_{\max}
  \left( \frac{1}{\eta_{t}} - \frac{1}{\eta_{t-1}}\right)} \\
  &= 2 \Ep{\frac{\Delta \Phi}{\eta_{T-1}}} 
  = 2 \Ep{\frac{\Delta \Phi}{\eta^x} \left(v^x_{T}\right)^{\ra} }
  \leq \frac{2 \Delta \Phi G^{2\alpha} T^{\alpha}}{\eta^x} .
\end{align*}

\paragraph{\Refterm{term:32}}
\begin{align*}
  2 l\rk \sum_{t=0}^{T-1}\Ep{\eta_t \norm*{\nabla_x \widetilde{f}(x_t, y_t)}^2}
  &= 2 l\rk \eta^x \Ep{\sum_{t=0}^{T-1} \frac{1}{\left(v^x_{t+1}\right)^{\alpha}} \norm*{\nabla_x \widetilde{f}(x_t, y_t)}^2} \\
  &\leq 2 l\rk \eta^x \Ep{ \left(\frac{v^x_{0}}{\left(v^x_{0}\right)^{\alpha}} + \sum_{t=0}^{T-1} \frac{1}{\left(v^x_{t+1}\right)^{\alpha}} \norm*{\nabla_x \widetilde{f}(x_t, y_t)}^2\right)} \\
  &\leq  \frac{2 l\rk \eta^x}{1- \ra} \Ep{  \left(v^x_{T}\right)^{1-\alpha} }
  \leq \frac{2 l\rk \eta^x G^{2-2\alpha}T^{1-\alpha}}{1- \ra}.
\end{align*}

\paragraph{\Refterm{term:33}}
According to the smoothness and strong-concavity of $f(x_t, \cdot)$, we have
\begin{align*}
  \Ep{\sum_{t=0}^{T-1}\norm*{\nabla_x f(x_t, y_t) - \nabla \Phi(x_{t})}^2}
  \leq l^2 \Ep{ \sum_{t=0}^{T-1}\norm*{y_t - y^*_t}^2}
  \leq 2 l\rk \Ep{\sum_{t=0}^{T-1} \left(f(x_{t}, y^*_{t}) - f(x_{t}, y_{t})\right)}.
\end{align*}
To bound the RHS, we use Young's inequality and have
\begin{align*}
  \norm*{y_{t+1} - y^*_{t+1}}^2
  \leq (1 + \lambda_t) \norm*{y_{t+1} - y^*_{t}}^2 + \left(1 + \frac{1}{\lambda_t}\right) \norm*{y^*_{t+1} - y^*_{t}}^2.
\end{align*}
Then applying \Cref{lemma:basic_strongly_convex_expand} with $\lambda_t = \frac{\mu \rg_t}{2}$ gives us
\begin{align*}
&\fakeeq \Ep{\sum_{t=0}^{T-1} \left( f(x_{t}, y^*_{t}) - f(x_{t}, y_t) \right)} \\
&\leq \Ep{\sum_{t=0}^{T-1} \left( \frac{1 - \rg_t \mu}{2 \rg_t} \norm*{y_t - y^*_{t}}^2 - \frac{1}{\rg_t (2 + \mu \rg_t)} \norm*{y_{t+1} - y^*_{t+1}}^2 \right)} \\
&\fakeeq + \underbrace{\Ep{\sum_{t=0}^{T-1} \frac{\rg_t}{2} \norm*{\nabla_y \widetilde{f}(x_{t}, y_t)}^2}}_{\mlabelterm{term:cd1}}
+ \underbrace{\Ep{\sum_{t=0}^{T-1} \frac{\left(1 + \frac{2}{\mu \rg_t}\right)}{\rg_t (2 + \mu \rg_t)} \norm*{y^*_{t+1} - y^*_{t}}^2 }}_{\mlabelterm{term:cd2}},
\end{align*}
where the first term is $\cO\left(1\right)$ according to \Cref{lemma:telescope_distance}.
The other two terms can be bounded as follow.
\begin{align*}
  &\fakeeq \mRefterm{term:cd1} \\
  &\leq \Ep{\frac{\eta^y}{2} \left( \frac{v^y_{0}}{\left(v^y_{0}\right)^{\beta}} + \sum_{t=0}^{T-1} \frac{1}{\left(v^y_{t+1}\right)^{\beta}} \norm*{\nabla_y \widetilde{f}(x_t, y_t)}^2 \right)}
  \leq \Ep{\frac{\eta^y}{2(1 - \beta)} \left(v^y_{T}\right)^{1-\beta}}
  \leq \frac{\eta^y G^{2-2\beta} T^{1-\beta}}{2(1 - \beta)}.
\end{align*}

\begin{align*}
  &\fakeeq \mRefterm{term:cd2} \\
  &= \Ep{\sum_{t=0}^{T-1} \left(\frac{1}{\left(v^y_{t+1}\right)^{\beta}} + \frac{2}{\mu \eta^y}\right) 
  \frac{\left(v^y_{t+1}\right)^{2\beta}}{2\eta^y (1 + \lambda_t)} \norm*{y^*_{t+1} - y^*_{t}}^2} \\
  &\leq \left(\frac{1}{2\left(v^y_{0}\right)^{\beta}\eta^y} + \frac{1}{\mu(\eta^y)^2}\right)
  \Ep{ \sum_{t=0}^{T-1} \left(v^y_{t+1}\right)^{2 \beta} \norm*{y^*_{t+1} - y^*_{t}}^2} \\
  &\leq \left(\frac{1}{2\left(v^y_{0}\right)^{\beta}\eta^y} + \frac{1}{\mu(\eta^y)^2}\right)
  \Ep{ \left(v^y_{T}\right)^{2 \beta} \sum_{t=0}^{T-1}  \norm*{y^*_{t+1} - y^*_{t}}^2} \\
  &\leq \left(\frac{\rk^2}{2\left(v^y_{0}\right)^{\beta}\eta^y} + \frac{\rk^2}{\mu(\eta^y)^2}\right)
  \Ep{ \left(v^y_{T}\right)^{2 \beta} \sum_{t=0}^{T-1} \norm*{x_{t+1} - x_{t}}^2} \\
  &= \left(\frac{\left(\eta^x\right)^2\rk^2}{2\left(v^y_{0}\right)^{\beta}\eta^y} + \frac{\left(\eta^x\right)^2\rk^2}{\mu(\eta^y)^2}\right)
  \Ep{ \left(v^y_{T}\right)^{2 \beta} \sum_{t=0}^{T-1} \frac{1}{\left(v^x_{t+1}\right)^{2\alpha}} \norm*{\nabla_x \widetilde{f}(x_t, y_t)}^2} \\
  &\leq \left(\frac{\left(\eta^x\right)^2\rk^2}{2\left(v^y_{0}\right)^{\beta}\eta^y} + \frac{\left(\eta^x\right)^2\rk^2}{\mu(\eta^y)^2}\right)
  \Ep{ \left(v^y_{T}\right)^{2 \beta}
    \left(\frac{1 + \log v^x_{T} - \log v^x_{0}}{\left(v^x_{0}\right)^{2\alpha-1}} \cdot \mathbf{1}_{\alpha \geq 0.5} 
  +  \frac{\left(v_{T}^x\right)^{1-2\alpha}}{1 - 2\alpha} \cdot \mathbf{1}_{\alpha < 0.5}\right)  } \\
  &\leq \left(\frac{\left(\eta^x\right)^2\rk^2}{2\left(v^y_{0}\right)^{\beta}\eta^y} + \frac{\left(\eta^x\right)^2\rk^2}{\mu(\eta^y)^2}\right)
  \left(\frac{\left(1 + \log G^2T - \log v^x_{0}\right) G^{4\beta} T^{2\beta}}{\left(v^x_{0}\right)^{2\alpha-1}} \cdot \mathbf{1}_{\alpha \geq 0.5} 
  +  \frac{G^{2-4\alpha + 4\beta}T^{1-2\alpha+2\beta}}{1 - 2\alpha} \cdot \mathbf{1}_{\alpha < 0.5}\right),
\end{align*}
where we used the the Lipschitzness of $y^*(\cdot)$ in the third inequality.

Summarizing all the terms, we finish the proof.

\end{proof}

\end{document}